\documentclass[onefignum,onetabnum]{siamonline220329}
\pdfoutput=1

\usepackage{lipsum}
\usepackage{amsfonts, amssymb}
\usepackage{subfig}
\usepackage{multicol}
\usepackage{multirow}
\usepackage{graphicx}
\usepackage{epstopdf}
\usepackage{algorithmic}
\ifpdf
  \DeclareGraphicsExtensions{.eps,.pdf,.png,.jpg}
\else
  \DeclareGraphicsExtensions{.eps}
\fi

\usepackage{enumitem}
\setlist[enumerate]{leftmargin=.5in}
\setlist[itemize]{leftmargin=.5in}


\newsiamremark{remark}{Remark}
\newsiamremark{hypothesis}{Hypothesis}
\crefname{hypothesis}{Hypothesis}{Hypotheses}
\newsiamthm{claim}{Claim}

\headers{HessianFR: An Algorithm for Minimax Optimization}{Y. Gao, 
H. Liu, M. Ng, M. Zhou}

\title{HessianFR: An Efficient Hessian-based Follow-the-Ridge Algorithm for Minimax Optimization \thanks{Preprint version. \funding{This work 
is supported by Hong Kong Research Grant Council GRF 12300218, 12300519, 17201020, 17300021, C1013-21GF,
C7004-21GF and Joint NSFC-RGC N-HKU76921.}}}

\author{Yihang Gao\thanks{Department of Mathematics, The University of Hong Kong, Pokfulam, Hong Kong
  (\email{gaoyh@connect.hku.hk}).}
\and Huafeng Liu \thanks{Department of Mathematics, The University of Hong Kong, Pokfulam, Hong Kong
  (\email{huafeng@hku.hk}).}
\and Michael K. Ng \thanks{Department of Mathematics, The University of Hong Kong, Pokfulam, Hong Kong
  (\email{mng@maths.hku.hk}).}
  \and Mingjie Zhou \thanks{Department of Mathematics, The University of Hong Kong, Pokfulam, Hong Kong
  (\email{mjzhou@connect.hku.hk}).}}

\usepackage{amsopn}

\ifpdf
\hypersetup{
  pdftitle={HessianFR: An Efficient Algorithm on Solving Minimax Problems},
  pdfauthor={Y. Gao, M. Zhou, H. Liu, M. Ng}
}
\fi


\externaldocument[][nocite]{ex_supplement}


\begin{document}

\maketitle

\begin{abstract}
Wide applications of differentiable two-player sequential games (e.g., image generation by GANs) have raised much interest and attention of researchers to study efficient and fast algorithms. Most of existing algorithms are developed based on nice properties of simultaneous games, i.e., convex-concave payoff functions, but are not applicable in solving sequential games with different settings. Some conventional gradient descent ascent algorithms theoretically and numerically fail to find the local Nash equilibrium of the simultaneous game or the local minimax (i.e., local Stackelberg equilibrium) of the sequential game. In this paper, we propose the HessianFR, an efficient Hessian-based Follow-the-Ridge algorithm with theoretical guarantees. Furthermore, the convergence of the stochastic algorithm and the approximation of Hessian inverse are exploited to improve algorithm efficiency. A series of experiments of training generative adversarial networks (GANs) have been conducted on both synthetic and real-world large-scale image datasets (e.g. MNIST, CIFAR-10 and CelebA). The experimental results demonstrate that the proposed HessianFR outperforms baselines in terms of convergence and image generation quality.
\end{abstract}

\begin{keywords}
Minimax Optimization, Sequential Games, Hessian-based Algorithm, Stochastic Algorithm, Generative Adversarial Networks, Image Generation
\end{keywords}

\begin{MSCcodes}
68U10, 68W40, 90C47
\end{MSCcodes}

\section{Introduction}
Two-player games as extensions of minimization problems achieve wide applications especially in economics \cite{echenique2003equilibrium} and  machine learning (e.g., generative adversarial networks (GANs) \cite{goodfellow2014generative, arjovsky2017wasserstein, park2019sphere}, adversarial learning \cite{shaham2018understanding, madry2018towards} and reinforcement learning \cite{dai2018sbeed} etc.). We merely focus on differentiable two-player zero-sum games which are mathematically formulated as the following min-max optimization problem:
\begin{equation*} \min_{\mathbf{x}} \max_{\mathbf{y}} f(\mathbf{x},\mathbf{y}), \end{equation*}where $\mathbf{x} \in \mathbb{R}^{d_1}$ and $\mathbf{y} \in \mathbb{R}^{d_2}$ are two players and $f(\mathbf{x},\mathbf{y})$ is the payoff function.  

There are mainly two types of two-player games, i.e., simultaneous games and sequential games. Taking words literally, the difference between two types of games lies in the order of actions two players take. In two-player simultaneous games, $\mathbf{x}$ and $\mathbf{y}$ have the same position that they are blind to other's actions before they finish at each step. It further implies that the payoff function is convex-concave (i.e., $\min_{\mathbf{x}} \max_{\mathbf{y}} f(\mathbf{x},\mathbf{y})=\max_{\mathbf{y}} \min_{\mathbf{x}} f(\mathbf{x},\mathbf{y})$) in simultaneous learning. Sequential games strictly require the order of two players' actions. The variable $\mathbf{x}$ plays the role of a leader who aims to reduce the loss (pay) while the follower $\mathbf{y}$ tries to maximize his gains after observing the leader's action.

Most of previous works stabilized GANs training through regularization and specific modeling. Generators and discriminators in generative adversarial networks (GANs) act as the leader $\mathbf{x}$ and the follower $\mathbf{y}$ respectively. Training GANs is usually equivalent to solving a sequential min-max optimization problem \cite{goodfellow2014generative, arjovsky2017towards, arjovsky2017wasserstein, gulrajani2017improved, park2019sphere}. Optimization difficulties of GANs, especially the instabilities and nonconvergence, have been emphasized and discussed for a long time, since GANs was first proposed \cite{salimans2016improved, goodfellow2016nips, arjovsky2017wasserstein}. From the model's point of view, some proper regularization terms added into the loss function numerically stabilize training \cite{goodfellow2016nips, gulrajani2017improved, brockLRW17, miyato2018spectral, cao2018improving}. Brock et al. \cite{brockLRW17} encouraged weights to be orthonormal to mitigate the instabilities of GANs by introducing an additional regularization term. With 1-Lipschitz constraints on discriminators, Wasserstein GAN achieves much better performances than vanilla GAN in terms of generation quality and training stability \cite{arjovsky2017wasserstein, gulrajani2017improved, wei2018improving, adler2018banach}. However, they are still suffering from the same difficulties using gradient descent ascent algorithms. To escape the optimization difficulties, some recent works of generative models aim to estimate the score of the target distribution by minimizing the fisher divergence \cite{song2019generative, song2020sliced}. No free lunch in the world that it is hard to sample high quality and high dimensional data from the score, although score-based models are more easier to be optimized, because score loses some information compared with the density function. Therefore, it is urgent to develop efficient algorithms for GANs training (sequential games).

Gradient descent ascent (GDA) is the extension of gradient descent from minimization problems to min-max problems. Unfortunately, GDA has been shown to suffer from undesirable convergence and strong rotation around fixed points \cite{daskalakis2018limit}. To overcome the mentioned drawbacks, several variants are proposed. Two Time-Scale GDA \cite{heusel2017gans} and GDA-k \cite{goodfellow2014generative} are two variants of GDA and are widely used in training GANs. Extra gradient (EG) \cite{korpelevich1976extragradient,gidel2018a}, optimistic GDA (OGDA) \cite{daskalakis2018training} and consensus optimization (CO) \cite{mescheder2017numerics}, extended from algorithms solving minimization problems, improve the convergence of GDA. Unfortunately, they are designed for solving convex-concave problems (simultaneous problems).

Recently, Jin et al. \cite{jin2020local} defined the equilibrium (i.e. local minimax) for differentiable sequential games which is more appropriate than Nash equilibrium, based on the works from Evtushenko \cite{evtushenko1974some, evtushenko1974iterative}. Local minimax takes into account the sequential structure and makes use of the Schur complement of the Hessian matrix rather than merely the block diagonal. Based on the definition of local minimax, FR \cite{Wang2020On} and TGDA \cite{fiez2020implicit} are proposed recently and locally converge to local minimax. Furthermore, to accelerate the convergence, Newton-type methods are proposed in \cite{zhang2020newton}. However, some of them may be not applicable in machine learning (deep learning) tasks. For more details, we will discuss later in \cref{proposed_mothods}. 

In this paper, we propose a novel algorithm, namely HessianFR, which has better convergence than FR \cite{Wang2020On} by adding Hessian information to $\mathbf{y}$ in each update but without additional computation. Mathematically, the Hessian information reduces the condition number of Jacobian matrix and thus accelerates the convergence. Overall, equipped with the perspective, here are our contributions:
\begin{itemize}
    \item A new algorithm is proposed and is theoretically guaranteed to locally converge and only converge to local minimax with proper learning rates. 
    
    \item We theoretically and numerically study several fast computation methods including diagonal methods as well as conjugate gradient for Hessian inverse and stochastic learning for lower computation costs in each update. Both diagonal methods and conjugate gradient perform well in practice. 
    
    \item Finally, we apply our algorithm in training generative adversarial networks on synthetic dataset to show the superiority of HessianFR than other algorithms in terms of iterations and seconds for convergence. Furthermore, we  test our algorithm in stochastic setting on large scale image datasets (e.g., MNIST, CIFAR-10 and CelebA). According to numerical results, the proposed HessianFR outperforms other algorithms in terms of the image generation quality.
\end{itemize}

\section{Preliminaries}

\textbf{Notation.} In this paper, we use $\|\cdot\|_2$ to represent the euclidean norm of both vector and the corresponding matrix spectral norm. Concisely, we rewrite $\mathbf{z} = (\mathbf{x},\mathbf{y})$ and the Hessian matrix
\begin{equation*}
\nabla_{\mathbf{z}\mathbf{z}}f(\mathbf{z}) = \left [ \begin{matrix} \nabla_{\mathbf{x}\mathbf{x}}f(\mathbf{x},\mathbf{y}) & \nabla_{\mathbf{x}\mathbf{y}}f(\mathbf{x},\mathbf{y})\\ \nabla_{\mathbf{y}\mathbf{x}}f(\mathbf{x},\mathbf{y}) &  \nabla_{\mathbf{y}\mathbf{y}}f(\mathbf{x},\mathbf{y}) \end{matrix} \right] =: \left [ \begin{matrix} \mathbf{H}_{\mathbf{x}\mathbf{x}} & \mathbf{H}_{\mathbf{x}\mathbf{y}}\\ \mathbf{H}_{\mathbf{y}\mathbf{x}} & \mathbf{H}_{\mathbf{y}\mathbf{y}}  \end{matrix} \right].
\end{equation*}Sometimes, we may use $\mathbf{H}_{\mathbf{x}\mathbf{x}}(\mathbf{x}_t,\mathbf{y}_t)$ to highlight the spatial and temporal location of $\mathbf{H}_{\mathbf{x}\mathbf{x}}$ to avoid ambiguity of notations. Analogous notations holds for $\mathbf{H}_{\mathbf{x}\mathbf{y}}$, $\mathbf{H}_{\mathbf{y}\mathbf{y}}$ and so on. Usually, the spatial and temporal location $\mathbf{z}_t = (\mathbf{x}_t,\mathbf{y}_t)$ is omitted if it is clear and straightforward. To highlight the Hessian matrix evaluated at a given point $\mathbf{z}^{*}=(\mathbf{x}^{*},\mathbf{y}^{*})$ which is within our interest, we denote it as $\mathbf{H}_{\mathbf{x x}}^{*}$, etc. We denote the maximal eigenvalue, the minimal eigenvalue and the spectral radius of a matrix by $\lambda_{\text{max}}(\cdot)$, $\lambda_{\text{min}}(\cdot)$ and $\lambda(\cdot)$ respectively.

Differentiable two-player zero-sum sequential games are mathematically formulated as solving the following min-max optimization problems:
\begin{equation}
\label{min-max}
    \min_{\mathbf{x}} \max_{\mathbf{y}} f(\mathbf{x},\mathbf{y}),
\end{equation}where $\mathbf{x} \in \mathbb{R}^{d_1}$ and $\mathbf{y} \in \mathbb{R}^{d_2}$ are two players and $f(\mathbf{x},\mathbf{y})$ is the payoff function. Note that the payoff function $f(\mathbf{x},\mathbf{y})$ is nonconvex-nonconcave in sequential setting. In this paper, we mainly focus on solving min-max problem for training generative adversarial networks. 

\subsection{Why Minimax Optimization}

Most of previous works define local Nash equilibrium for min-max problems in training generative adversarial networks. We first review the definition and some properties for (local) Nash equilibrium here. Then, we demonstrate that it is overly strict to define Nash equilibrium in training generative adversarial networks.

\begin{definition}
[Local Nash equilibrium]
A point $(\mathbf{x}^{*},\mathbf{y}^{*})$ is a local Nash equilibrium for min-max problem \cref{min-max} if there exists $\delta>0$ such that 
\begin{equation*}
f(\mathbf{x}^{*},\mathbf{y}) \leq f(\mathbf{x}^{*},\mathbf{y}^{*}) \leq f(\mathbf{x},\mathbf{y}^{*}),
\end{equation*}for all $(\mathbf{x},\mathbf{y})$ satisfying $\|\mathbf{x}-\mathbf{x}^{*}\|_2 \leq \delta$ and $\|\mathbf{y}-\mathbf{y}^{*}\|_2 \leq \delta$.
\end{definition}

\begin{proposition}
[Necessary conditions for local Nash equilibrium] \label{necessary_local_Nash_equil}
The local Nash equilibrium $(\mathbf{x}^{*},\mathbf{y}^{*})$ for a twice differentiable payoff function $f(\mathbf{x},\mathbf{y})$ must satisfy the following conditions: (1) It is a critical point (i.e., $\nabla_{\mathbf{x}}f(\mathbf{x}^{*},\mathbf{y}^{*})=0$ and $\nabla_{\mathbf{y}}f(\mathbf{x}^{*},\mathbf{y}^{*})=0$); (2) $\nabla_{\mathbf{y y}} f(\mathbf{x}^{*}, \mathbf{y}^{*}) \preceq \mathbf{0}, \text { and } \nabla_{\mathbf{x x}} f(\mathbf{x}^{*}, \mathbf{y}^{*}) \succeq \mathbf{0}$.
\end{proposition}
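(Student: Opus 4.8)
The plan is to observe that the two-sided inequality defining a local Nash equilibrium decouples into two independent single-block optimization problems, after which both claims follow from the classical first- and second-order necessary conditions for smooth unconstrained extrema. First I would freeze $\mathbf{y}=\mathbf{y}^{*}$ in the right-hand inequality $f(\mathbf{x}^{*},\mathbf{y}^{*}) \leq f(\mathbf{x},\mathbf{y}^{*})$, which exhibits $\mathbf{x}^{*}$ as a local minimizer of the single-variable map $g(\mathbf{x}) := f(\mathbf{x},\mathbf{y}^{*})$ over the ball $\|\mathbf{x}-\mathbf{x}^{*}\|_2 \leq \delta$; symmetrically, freezing $\mathbf{x}=\mathbf{x}^{*}$ in the left-hand inequality $f(\mathbf{x}^{*},\mathbf{y}) \leq f(\mathbf{x}^{*},\mathbf{y}^{*})$ exhibits $\mathbf{y}^{*}$ as a local maximizer of $h(\mathbf{y}) := f(\mathbf{x}^{*},\mathbf{y})$ over $\|\mathbf{y}-\mathbf{y}^{*}\|_2 \leq \delta$.

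For the critical-point claim (1), I would apply Fermat's first-order condition: since $f$ is twice differentiable, $g$ and $h$ are differentiable, and an interior local extremum forces the gradient to vanish. This yields $\nabla g(\mathbf{x}^{*}) = \nabla_{\mathbf{x}}f(\mathbf{x}^{*},\mathbf{y}^{*}) = 0$ and $\nabla h(\mathbf{y}^{*}) = \nabla_{\mathbf{y}}f(\mathbf{x}^{*},\mathbf{y}^{*}) = 0$, which is exactly condition (1).

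For the curvature claim (2), I would invoke the second-order necessary condition for unconstrained extrema: the Hessian of a twice-differentiable function is positive semidefinite at an interior local minimum and negative semidefinite at an interior local maximum. Applied to $g$ this gives $\nabla^{2} g(\mathbf{x}^{*}) = \mathbf{H}_{\mathbf{x}\mathbf{x}}^{*} = \nabla_{\mathbf{x}\mathbf{x}}f(\mathbf{x}^{*},\mathbf{y}^{*}) \succeq \mathbf{0}$, and applied to $h$ it gives $\nabla^{2} h(\mathbf{y}^{*}) = \mathbf{H}_{\mathbf{y}\mathbf{y}}^{*} = \nabla_{\mathbf{y}\mathbf{y}}f(\mathbf{x}^{*},\mathbf{y}^{*}) \preceq \mathbf{0}$, establishing condition (2).

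There is essentially no hard step here: the entire substance is the decoupling observation, which is immediate because the Nash neighborhood is a product of two balls, so one may freeze either variable at its starred value while the other ranges freely over a full ball. The only point requiring care is to record that the extrema are genuinely interior, so that the unconstrained necessary conditions apply without boundary complications; this is automatic since the defining inequalities hold on an entire neighborhood of $(\mathbf{x}^{*},\mathbf{y}^{*})$. Thus the proposition reduces to citing standard optimization facts applied separately to each coordinate block.
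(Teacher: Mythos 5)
Your proposal is correct and is exactly the standard argument: the paper states this proposition without proof, treating it as a classical fact, and the intended reasoning is precisely your decoupling of the product-ball neighborhood into the two block problems $g(\mathbf{x})=f(\mathbf{x},\mathbf{y}^{*})$ and $h(\mathbf{y})=f(\mathbf{x}^{*},\mathbf{y})$ followed by the first- and second-order necessary conditions for interior extrema. Your remark that interiority is automatic (since $\mathbf{x}^{*}$ and $\mathbf{y}^{*}$ sit at the centers of their respective balls) is the right point of care, and nothing further is needed.
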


\begin{proposition}
[Sufficient conditions for local Nash equilibrium]
\label{sufficient_local_Nash_equil}
For a twice differentiable min-max problem \cref{min-max}, if the critical point $(\mathbf{x}^{*},\mathbf{y}^{*})$ satisfies $\nabla_{\mathbf{y y}} f(\mathbf{x}^{*},\mathbf{y}^{*}) \prec \mathbf{0} \text { and } \nabla_{\mathbf{x x}} f(\mathbf{x}^{*},\mathbf{y}^{*})$ $ \succ \mathbf{0}$, then it is a (strict) local Nash equilibrium. 
\end{proposition}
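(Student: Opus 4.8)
The plan is to exploit the fact that the two inequalities defining a local Nash equilibrium decouple. The left inequality $f(\mathbf{x}^{*},\mathbf{y}) \leq f(\mathbf{x}^{*},\mathbf{y}^{*})$ holds $\mathbf{x}$ fixed at $\mathbf{x}^{*}$ and is merely the assertion that $\mathbf{y}^{*}$ is a strict local maximizer of the single-block function $g(\mathbf{y}) := f(\mathbf{x}^{*},\mathbf{y})$, while the right inequality $f(\mathbf{x}^{*},\mathbf{y}^{*}) \leq f(\mathbf{x},\mathbf{y}^{*})$ holds $\mathbf{y}$ fixed at $\mathbf{y}^{*}$ and asserts that $\mathbf{x}^{*}$ is a strict local minimizer of $h(\mathbf{x}) := f(\mathbf{x},\mathbf{y}^{*})$. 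Because neither restricted problem ever varies both coordinates at once, only the diagonal Hessian blocks $\mathbf{H}_{\mathbf{x}\mathbf{x}}^{*}$ and $\mathbf{H}_{\mathbf{y}\mathbf{y}}^{*}$ enter, which is exactly why the stated hypotheses involve no cross term and no Schur complement.

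With this reduction in hand, I would invoke the classical second-order sufficient condition for unconstrained optimality on each restricted function separately. Differentiating under the fixed coordinate, the criticality assumption gives $\nabla g(\mathbf{y}^{*}) = \nabla_{\mathbf{y}}f(\mathbf{x}^{*},\mathbf{y}^{*}) = \mathbf{0}$ and $\nabla h(\mathbf{x}^{*}) = \nabla_{\mathbf{x}}f(\mathbf{x}^{*},\mathbf{y}^{*}) = \mathbf{0}$, while the Hessians of the restricted functions are precisely $\nabla^2 g(\mathbf{y}^{*}) = \mathbf{H}_{\mathbf{y}\mathbf{y}}^{*} \prec \mathbf{0}$ and $\nabla^2 h(\mathbf{x}^{*}) = \mathbf{H}_{\mathbf{x}\mathbf{x}}^{*} \succ \mathbf{0}$ by hypothesis.

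To make each optimality claim rigorous I would use a second-order Taylor expansion with Lagrange remainder. For $h$, the vanishing gradient yields $h(\mathbf{x}) = h(\mathbf{x}^{*}) + \frac{1}{2}(\mathbf{x}-\mathbf{x}^{*})^{\top}\nabla^2 h(\xi)(\mathbf{x}-\mathbf{x}^{*})$ for some point $\xi$ on the segment joining $\mathbf{x}$ and $\mathbf{x}^{*}$. Since $\nabla^2 h$ is continuous and strictly positive definite at $\mathbf{x}^{*}$, its smallest eigenvalue remains positive on a ball of some radius $\delta_1$, so the quadratic term is strictly positive for $\mathbf{x} \neq \mathbf{x}^{*}$ in that ball, giving the right inequality strictly. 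The symmetric argument applied to $g$ (with the inequality reversed, using $\mathbf{H}_{\mathbf{y}\mathbf{y}}^{*} \prec \mathbf{0}$) furnishes a radius $\delta_2$. Choosing $\delta = \min(\delta_1,\delta_2)$ establishes both inequalities simultaneously on the box $\{\|\mathbf{x}-\mathbf{x}^{*}\|_2 \le \delta,\ \|\mathbf{y}-\mathbf{y}^{*}\|_2 \le \delta\}$, and the strictness delivers a strict local Nash equilibrium.

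I do not expect a genuine obstacle here: the entire content is that the Nash inequalities separate into two independent one-variable optimality statements, each settled by the textbook second-order sufficient condition. The only mildly delicate point is the passage from definiteness at the single point $\mathbf{z}^{*}$ to definiteness throughout a neighborhood, which follows from continuity of the eigenvalues of the continuously varying Hessian; everything else is routine. It is worth noting that this clean decoupling is special to the Nash setting and contrasts sharply with the local minimax case, where the leader--follower ordering forces the Schur complement $\mathbf{H}_{\mathbf{x}\mathbf{x}}^{*} - \mathbf{H}_{\mathbf{x}\mathbf{y}}^{*}(\mathbf{H}_{\mathbf{y}\mathbf{y}}^{*})^{-1}\mathbf{H}_{\mathbf{y}\mathbf{x}}^{*}$ into the analysis.
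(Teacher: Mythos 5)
Your proof is correct, and in fact the paper offers no proof of this proposition at all --- it is stated as a standard second-order sufficient condition (the Nash inequalities decouple into two independent single-block optimality statements, exactly as you observe), so your decoupling-plus-Taylor argument is precisely the routine reasoning the paper leaves implicit. One minor technical remark: your Lagrange-remainder expansion $h(\mathbf{x}) = h(\mathbf{x}^{*}) + \frac{1}{2}(\mathbf{x}-\mathbf{x}^{*})^{\top}\nabla^2 h(\xi)(\mathbf{x}-\mathbf{x}^{*})$ needs the Hessian to exist and (for your eigenvalue-continuity step) be continuous in a whole neighborhood, i.e.\ $f \in C^2$, which is nominally stronger than the stated ``twice differentiable'' hypothesis --- though it is consistent with the smoothness the paper assumes elsewhere --- and if you want to match the weaker hypothesis exactly, the Peano-remainder form $h(\mathbf{x}) = h(\mathbf{x}^{*}) + \frac{1}{2}(\mathbf{x}-\mathbf{x}^{*})^{\top}\mathbf{H}_{\mathbf{x}\mathbf{x}}^{*}(\mathbf{x}-\mathbf{x}^{*}) + o(\|\mathbf{x}-\mathbf{x}^{*}\|_2^2)$, together with $\lambda_{\text{min}}(\mathbf{H}_{\mathbf{x}\mathbf{x}}^{*})>0$, yields the same strict inequalities using definiteness only at the point $\mathbf{z}^{*}$.
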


Local Nash equilibroum implies that the payoff function $f(\mathbf{x},\mathbf{y})$ is locally convex-concave although it is not for simultaneous games. In other words, the local Nash equilibrium $(\mathbf{x}^{*},\mathbf{y}^{*})$ for min-max problem $\min_{\mathbf{x}} \max_{\mathbf{y}} f(\mathbf{x},\mathbf{y})$ is also a local Nash equilibrium for max-min problem $\max_{\mathbf{y}} \min_{\mathbf{x}} f(\mathbf{x},\mathbf{y})$. The Nash equilibrium is overly strict for equilibrium of sequential games. Moreover, it is hard to say whether Nash equilibrium exists for nonconvex-nonconcave min-max problems. For example, we consider a two-dimensional payoff function $f(x,y)=\sin(x+y)$. We can easily verify that no equilibrium exists by \cref{necessary_local_Nash_equil}. The necessary condition of Nash equilibrium requires the Hessian matrix $\nabla_{\mathbf{x}\mathbf{x}}f(\mathbf{x}^{*},\mathbf{y}^{*})$ and $\nabla_{\mathbf{y}\mathbf{y}}f(\mathbf{x}^{*},\mathbf{y}^{*})$ to be semi positive definite and semi negative definite respectively, regardless of the correlation term $\nabla_{\mathbf{x}\mathbf{y}}f(\mathbf{x}^{*},\mathbf{y}^{*})$, which further implies the uncorrelation of variables $\mathbf{x}$ and $\mathbf{y}$ w.r.t. $f(\mathbf{x},\mathbf{y})$ at the local Nash equilibrium $(\mathbf{x}^{*},\mathbf{y}^{*})$.

Generative adversarial networks, first proposed by Goodfellow et al. \cite{goodfellow2014generative}, achieve much attention and are widely applied in various fields and applications \cite{bowman2015generating, odena2017conditional}. In training generative adversarial networks, $\mathbf{x}$ and $\mathbf{y}$ represent the trainable parameters of neural networks for generators and discriminators respectively. In JS-GAN (vanilla GAN) \cite{goodfellow2014generative}, the discriminator parameterized by $\mathbf{y}$ measures the JS divergence between the generated distribution and the target distribution. Wasserstein GAN \cite{arjovsky2017wasserstein} adopts a weaker but softer metric which is numerically represented by 1-Lipschitz neural networks parameterized by $\mathbf{y}$. The discriminator in Sphere GAN \cite{park2019sphere} first projects data to a unit sphere and then measures the data distance on the manifold. Suppose that $\mathbf{y}^{*} = \arg \max_{\mathbf{y}} f(\mathbf{x}^{*},\mathbf{y})$ which measures the distance between the generated distribution and the target distribution, it is not necessary to be a local maximizer for all $\mathbf{x}$ satisfying $\|\mathbf{x}-\mathbf{x}^{*}\|_2 \leq \delta$. 

Based on above analysis, it is inappropriate to depict the solution of sequential games by Nash equilibrium. Jin et al. \cite{jin2020local} defined local minimax for the equilibrium of differentiable sequential games \cref{min-max} which is intuitively and theoretically more feasible.

\begin{definition}
[Local minimax] 
\label{def_local_minmax}
A point $(\mathbf{x}^{*},\mathbf{y}^{*})$ is a local minimax for min-max problem \cref{min-max} if there exists $\delta_0>0$ and a continuous function $h(\delta)$ with $h(\delta) \to 0$ as $\delta \to 0$, such that 
\begin{equation*}
f(\mathbf{x}^{*},\mathbf{y}) \leq f(\mathbf{x}^{*},\mathbf{y}^{*}) \leq \max_{\mathbf{y}^{\prime}:\|\mathbf{y}^{\prime}-\mathbf{y}^{*}\|_2 \leq h(\delta)} f(\mathbf{x},\mathbf{y}^{\prime}),
\end{equation*}
for all $0<\delta \leq \delta_0$, $\|\mathbf{x}-\mathbf{x}^{*}\|_2 \leq \delta$ and $\|\mathbf{y}-\mathbf{y}^{*}\|_2 \leq \delta$.

By implicit function theorem, the definition can be further clarified \cite{Wang2020On}. (1) $y^{*}$ is a local maximum of $f(\mathbf{x}^{*}, \cdot)$; (2) $\mathbf{x}^{*}$ is a local minimum of $f(\mathbf{x},r(\mathbf{x}))$ where $r(\mathbf{x})$ is an implicit function defined by $\nabla_{\mathbf{y}}f(\mathbf{x},r(\mathbf{x}))=0$ in a neighborhood of $(\mathbf{x}^{*},\mathbf{y}^{*})$ with $r(\mathbf{x}^{*})=\mathbf{y}^{*}$.
\end{definition}

Here, local minimax does not require $\mathbf{y}^{*}$ to be the local maximum of $f(\mathbf{x},\cdot)$ for all $\mathbf{x}$ in a neighborhood of $\mathbf{x}^{*}$. Note that the local maximum of $f(\mathbf{x},\cdot)$ is allowed to change slightly (by properties of $h(\delta)$) with $\mathbf{x}$. For more details, please refer to \cite{jin2020local}. Similar to local Nash equilibrium, necessary and sufficient conditions for local minimax are established in \cite{jin2020local}.

\begin{proposition}
[Necessary conditions for local minimax]
\label{necessary_local_minimax}
The local minimax $(\mathbf{x}^{*},\mathbf{y}^{*})$ for a twice differentiable payoff function $f(\mathbf{x},\mathbf{y})$ must satisfies the following conditions: (1) It is a critical point (i.e., $\nabla_{\mathbf{x}}f(\mathbf{x}^{*},\mathbf{y}^{*})=0$ and $\nabla_{\mathbf{y}}f(\mathbf{x}^{*},\mathbf{y}^{*})=0$); (2) $\nabla_{\mathbf{y y}} f(\mathbf{x}^{*},\mathbf{y}^{*}) \preceq \mathbf{0}, \text { and } \nabla_{\mathbf{x x}} f-\nabla_{\mathbf{x}\mathbf{y}}f(\nabla_{\mathbf{y}\mathbf{y}}f)^{-1}\nabla_{\mathbf{y}\mathbf{x}}f|_{(\mathbf{x}^{*},\mathbf{y}^{*})} \succeq \mathbf{0}$.
\end{proposition}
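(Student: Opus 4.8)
The plan is to derive both conditions directly from the two-part implicit-function characterization of local minimax recorded in \cref{def_local_minmax}: namely that (i) $\mathbf{y}^{*}$ is a local maximum of $f(\mathbf{x}^{*},\cdot)$, and (ii) $\mathbf{x}^{*}$ is a local minimum of $g(\mathbf{x}):=f(\mathbf{x},r(\mathbf{x}))$, where $r$ is the implicit function satisfying $\nabla_{\mathbf{y}}f(\mathbf{x},r(\mathbf{x}))=0$ with $r(\mathbf{x}^{*})=\mathbf{y}^{*}$. Since every conclusion is a first- or second-order stationarity statement, the proof reduces to applying the elementary optimality conditions for unconstrained local extrema to these two reduced problems and translating them back through the chain rule.

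First I would establish criticality (condition (1)). The first-order condition for the local maximum in (i) immediately gives $\nabla_{\mathbf{y}}f(\mathbf{x}^{*},\mathbf{y}^{*})=0$. For the leader variable I would differentiate $g$ by the chain rule, $\nabla g(\mathbf{x})=\nabla_{\mathbf{x}}f(\mathbf{x},r(\mathbf{x}))+\left(\nabla r(\mathbf{x})\right)^{\top}\nabla_{\mathbf{y}}f(\mathbf{x},r(\mathbf{x}))$, and observe that the second term vanishes identically because $\nabla_{\mathbf{y}}f(\mathbf{x},r(\mathbf{x}))=0$ along the ridge. Hence $\nabla g(\mathbf{x})=\nabla_{\mathbf{x}}f(\mathbf{x},r(\mathbf{x}))$, and the stationarity of $g$ at $\mathbf{x}^{*}$ yields $\nabla_{\mathbf{x}}f(\mathbf{x}^{*},\mathbf{y}^{*})=0$.

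Next I would treat the second-order conditions. The inequality $\nabla_{\mathbf{yy}}f(\mathbf{x}^{*},\mathbf{y}^{*})\preceq\mathbf{0}$ is exactly the second-order necessary condition for the local maximum in (i). For the Schur-complement inequality I would compute $\nabla^{2}g(\mathbf{x}^{*})$: differentiating the identity $\nabla_{\mathbf{y}}f(\mathbf{x},r(\mathbf{x}))=0$ in $\mathbf{x}$ gives $\nabla_{\mathbf{yx}}f+\nabla_{\mathbf{yy}}f\,\nabla r(\mathbf{x})=\mathbf{0}$, so that $\nabla r(\mathbf{x})=-\left(\nabla_{\mathbf{yy}}f\right)^{-1}\nabla_{\mathbf{yx}}f$. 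Differentiating $\nabla g(\mathbf{x})=\nabla_{\mathbf{x}}f(\mathbf{x},r(\mathbf{x}))$ once more and substituting then produces exactly $\nabla^{2}g(\mathbf{x}^{*})=\nabla_{\mathbf{xx}}f-\nabla_{\mathbf{xy}}f\left(\nabla_{\mathbf{yy}}f\right)^{-1}\nabla_{\mathbf{yx}}f$ evaluated at $(\mathbf{x}^{*},\mathbf{y}^{*})$. Since $\mathbf{x}^{*}$ is a local minimum of $g$, the second-order necessary condition $\nabla^{2}g(\mathbf{x}^{*})\succeq\mathbf{0}$ delivers the claimed inequality.

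The main obstacle is the invertibility of $\nabla_{\mathbf{yy}}f(\mathbf{x}^{*},\mathbf{y}^{*})$, which is needed both for the implicit function $r$ to exist and be $C^{1}$ near $\mathbf{x}^{*}$ and for the inverse in the Schur complement to be meaningful. The first necessary condition only furnishes $\nabla_{\mathbf{yy}}f\preceq\mathbf{0}$, which may be singular; the cleanest way to close this gap is to note that whenever the inverse $\left(\nabla_{\mathbf{yy}}f\right)^{-1}$ appears in the statement we are implicitly in the nondegenerate regime $\nabla_{\mathbf{yy}}f\prec\mathbf{0}$, where the implicit function theorem applies and $r$ is well-defined and smooth. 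I would therefore state the Schur-complement conclusion under this nondegeneracy and, if the degenerate case $\lambda_{\text{max}}(\nabla_{\mathbf{yy}}f)=0$ must be covered, fall back on a limiting argument, replacing $\nabla_{\mathbf{yy}}f$ by $\nabla_{\mathbf{yy}}f-\epsilon\mathbf{I}$ and letting $\epsilon\downarrow 0$ to retain the inequality in the appropriate generalized sense.
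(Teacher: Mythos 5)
The paper gives no proof of its own for this proposition---it defers entirely to Jin et al.~\cite{jin2020local}---and your argument is essentially the standard one from that reference: the conditions on $\mathbf{y}$ ($\nabla_{\mathbf{y}}f=0$ and $\nabla_{\mathbf{y}\mathbf{y}}f\preceq\mathbf{0}$) follow from $\mathbf{y}^{*}$ being a local maximizer of $f(\mathbf{x}^{*},\cdot)$, while the remaining conditions are the first- and second-order necessary conditions for $\mathbf{x}^{*}$ minimizing $g(\mathbf{x})=f(\mathbf{x},r(\mathbf{x}))$, using $\nabla g=\nabla_{\mathbf{x}}f$ along the ridge and $\nabla^{2}g(\mathbf{x}^{*})=\nabla_{\mathbf{x}\mathbf{x}}f-\nabla_{\mathbf{x}\mathbf{y}}f\left(\nabla_{\mathbf{y}\mathbf{y}}f\right)^{-1}\nabla_{\mathbf{y}\mathbf{x}}f$ obtained from $\nabla r=-\left(\nabla_{\mathbf{y}\mathbf{y}}f\right)^{-1}\nabla_{\mathbf{y}\mathbf{x}}f$, exactly as you compute. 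You are also right to restrict the Schur-complement clause to the nondegenerate regime $\nabla_{\mathbf{y}\mathbf{y}}f(\mathbf{x}^{*},\mathbf{y}^{*})\prec\mathbf{0}$, which is both how \cite{jin2020local} states it and what legitimizes the implicit-function characterization recorded in \cref{def_local_minmax}; the $\epsilon$-perturbation fallback is unnecessary for the statement as written (the presence of $(\nabla_{\mathbf{y}\mathbf{y}}f)^{-1}$ already presupposes invertibility) and, as sketched, would not constitute a proof anyway, since $\nabla_{\mathbf{y}\mathbf{y}}f-\epsilon\mathbf{I}$ is not the Hessian of a nearby minimax problem, so it is best simply dropped.
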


\begin{proposition}
[Sufficient conditions for local minimax]
\label{sufficient_local_minimax}
For a twice differentiable min-max problem \cref{min-max}, if the critical point $(\mathbf{x}^{*},\mathbf{y}^{*})$ satisfies $\nabla_{\mathbf{y y}} f(\mathbf{x}^{*},\mathbf{y}^{*}) \prec \mathbf{0}, \text { and } \nabla_{\mathbf{x x}} f-\nabla_{\mathbf{x}\mathbf{y}}f(\nabla_{\mathbf{y}\mathbf{y}}f)^{-1}\nabla_{\mathbf{y}\mathbf{x}}f|_{(\mathbf{x}^{*},\mathbf{y}^{*})} \succ \mathbf{0}$, then it is a (strict) local minimax. 
\end{proposition}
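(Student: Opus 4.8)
The plan is to verify the two equivalent conditions for a local minimax that follow from \cref{def_local_minmax} via the implicit function theorem: that $\mathbf{y}^{*}$ is a (strict) local maximum of $f(\mathbf{x}^{*},\cdot)$, and that $\mathbf{x}^{*}$ is a (strict) local minimum of the reduced function $g(\mathbf{x}) := f(\mathbf{x},r(\mathbf{x}))$. The first condition is immediate: since $(\mathbf{x}^{*},\mathbf{y}^{*})$ is a critical point we have $\nabla_{\mathbf{y}}f(\mathbf{x}^{*},\mathbf{y}^{*})=0$, and the hypothesis $\nabla_{\mathbf{y}\mathbf{y}}f(\mathbf{x}^{*},\mathbf{y}^{*})\prec\mathbf{0}$ is exactly the second-order sufficient condition for $\mathbf{y}^{*}$ to be a strict local maximizer of the map $\mathbf{y}\mapsto f(\mathbf{x}^{*},\mathbf{y})$.

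The second condition requires constructing $r$ and analyzing $g$. First I would invoke the implicit function theorem: because $\nabla_{\mathbf{y}\mathbf{y}}f(\mathbf{x}^{*},\mathbf{y}^{*})\prec\mathbf{0}$ is in particular nonsingular and $f$ is twice continuously differentiable, the equation $\nabla_{\mathbf{y}}f(\mathbf{x},r(\mathbf{x}))=0$ defines a $C^{1}$ map $r$ on a neighborhood of $\mathbf{x}^{*}$ with $r(\mathbf{x}^{*})=\mathbf{y}^{*}$. Implicitly differentiating this identity gives the Jacobian $Dr(\mathbf{x})=-(\nabla_{\mathbf{y}\mathbf{y}}f)^{-1}\nabla_{\mathbf{y}\mathbf{x}}f$ along $(\mathbf{x},r(\mathbf{x}))$; in particular $Dr(\mathbf{x}^{*})=-(\nabla_{\mathbf{y}\mathbf{y}}f^{*})^{-1}\nabla_{\mathbf{y}\mathbf{x}}f^{*}$, where the star denotes evaluation at $(\mathbf{x}^{*},\mathbf{y}^{*})$. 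Since $f\in C^{2}$ and $r\in C^{1}$, the reduced function $g$ is itself $C^{2}$, so its gradient and Hessian at $\mathbf{x}^{*}$ are well defined.

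The key algebraic step is the chain-rule computation of $\nabla g$ and $\nabla^{2}g$. Differentiating $g(\mathbf{x})=f(\mathbf{x},r(\mathbf{x}))$ and using the defining relation $\nabla_{\mathbf{y}}f(\mathbf{x},r(\mathbf{x}))=0$, the cross term drops out, leaving $\nabla g(\mathbf{x})=\nabla_{\mathbf{x}}f(\mathbf{x},r(\mathbf{x}))$; evaluating at $\mathbf{x}^{*}$ gives $\nabla g(\mathbf{x}^{*})=\nabla_{\mathbf{x}}f(\mathbf{x}^{*},\mathbf{y}^{*})=0$, so $\mathbf{x}^{*}$ is a critical point of $g$. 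Differentiating once more and substituting $Dr(\mathbf{x}^{*})$ yields
\begin{equation*}
\nabla^{2}g(\mathbf{x}^{*}) = \nabla_{\mathbf{x}\mathbf{x}}f^{*} + \nabla_{\mathbf{x}\mathbf{y}}f^{*}\,Dr(\mathbf{x}^{*}) = \nabla_{\mathbf{x}\mathbf{x}}f^{*} - \nabla_{\mathbf{x}\mathbf{y}}f^{*}(\nabla_{\mathbf{y}\mathbf{y}}f^{*})^{-1}\nabla_{\mathbf{y}\mathbf{x}}f^{*},
\end{equation*}
which is precisely the Schur complement in the hypothesis and is therefore $\succ\mathbf{0}$. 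Hence $\mathbf{x}^{*}$ is a strict local minimizer of $g$, completing the verification of both conditions.

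Finally, to match the original \cref{def_local_minmax} directly rather than only its implicit-function restatement, I would exhibit the modulus $h$: taking $\mathbf{y}'=r(\mathbf{x})$ and using the local Lipschitz bound $\|r(\mathbf{x})-\mathbf{y}^{*}\|_2\le L\|\mathbf{x}-\mathbf{x}^{*}\|_2$ coming from continuity of $Dr$, one may set $h(\delta)=L\delta$, so that $\max_{\|\mathbf{y}'-\mathbf{y}^{*}\|_2\le h(\delta)}f(\mathbf{x},\mathbf{y}')\ge f(\mathbf{x},r(\mathbf{x}))=g(\mathbf{x})\ge g(\mathbf{x}^{*})=f(\mathbf{x}^{*},\mathbf{y}^{*})$ for $\mathbf{x}$ near $\mathbf{x}^{*}$. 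The main obstacle is not any single estimate but the bookkeeping of keeping all second-order expansions valid on a common neighborhood and arguing that strict definiteness yields genuinely strict inequalities; the identification of $\nabla^{2}g(\mathbf{x}^{*})$ with the Schur complement is the conceptual heart of the argument.
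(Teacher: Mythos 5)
Your proof is correct. Note that the paper itself offers no proof of this proposition---it is quoted from Jin et al.~\cite{jin2020local}---and your argument (implicit function theorem to construct $r$, the chain-rule identification of $\nabla^{2}g(\mathbf{x}^{*})$ with the Schur complement $\nabla_{\mathbf{x}\mathbf{x}}f^{*}-\nabla_{\mathbf{x}\mathbf{y}}f^{*}(\nabla_{\mathbf{y}\mathbf{y}}f^{*})^{-1}\nabla_{\mathbf{y}\mathbf{x}}f^{*}$, and the linear modulus $h(\delta)=L\delta$ from local Lipschitz continuity of $r$) is precisely the standard proof in that reference, verifying exactly the two clarified conditions the paper records in \cref{def_local_minmax}.
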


Comparing \cref{necessary_local_Nash_equil} and \cref{sufficient_local_Nash_equil} with \cref{necessary_local_minimax} and \cref{sufficient_local_minimax}, the main difference lies in that local minimax utilizes the Schur complement of the Hessian matrix while local Nash equilibrium merely focus on the block diagonal matrix. Intuitively, the Schur complement has more information than the block diagonal matrix because the latter ignores the correlation of two variables while the former adopts the whole Hessian information. 

\subsection{Why not GDA and its Variants}
Two Time-Scale GDA (i.e., TTUR in \cite{heusel2017gans}) and GDA-k (adopted in most of GANs training \cite{goodfellow2014generative, arjovsky2017wasserstein, gulrajani2017improved}) are two most popular variants of GDA. The global convergence of Two Time-Scale GDA and GDA-k is less than satisfactory. Two Time-Scale GDA perhaps converges to an undesired point which is neither Nash equilibrium nor local minimax. And GDA-k is convergent if it satisfies Max-Oracle which is extremely strict in practice \cite{jin2020local}. Fortunately, GDA has local convergence properties \cite{zhang2020newton} for local minimax, which may explain the success of GANs trained by GDA in varies tasks. We further find that Extra Gradient (EG) \cite{korpelevich1976extragradient} which is derived for solving convex-concave problems (simultaneous games), is not suitable for solving sequential min-max problem. For more details of (local) convergence of GDA (and its variants) to local minimax, please see \cref{appendix_local_cong_GDA}.

\subsection{Follow-the-Ridge (FR) Algorithm}
Follow-the-Ridge (FR) algorithm \cite{Wang2020On} is the first work studying min-max problem \cref{min-max} based on local minimax. We briefly introduce the main idea of FR algorithm here. Suppose that $(\mathbf{x}_t,\mathbf{y}_t)$ is on the ridge, i.e., $\nabla_{\mathbf{y}}f(\mathbf{x}_t,r(\mathbf{x}_t))=0$ and $r(\mathbf{x}_t)=\mathbf{y}_t$ where $r(\mathbf{x})$ is the implicit function defined in  \cref{def_local_minmax}. In each step $t$, $\mathbf{x}_t$ is updated by gradient descent, i.e., $\mathbf{x}_{t+1}=\mathbf{x}_t - \eta_{\mathbf{x}} \nabla_{\mathbf{x}}f(\mathbf{x}_t,\mathbf{y}_t)$. The leader $\mathbf{x}$ always cannot foresee the follower's action, that's why simple gradient descent is adopted for updating $\mathbf{x}$. However, the follower $\mathbf{y}_t$ witnesses the update of $\mathbf{x}$ and he hopes to take advantage of the additional information and stay on the ridge (i.e., satisfying $\nabla_{\mathbf{y}}f(\mathbf{x},r(\mathbf{x}))=0$). With the current state that $\nabla_{\mathbf{y}}f(\mathbf{x}_t,r(\mathbf{x}_t))=0$ and the known update $\mathbf{x}_{t+1}=\mathbf{x}_t - \eta_{\mathbf{x}} \nabla_{\mathbf{x}}f(\mathbf{x}_t,\mathbf{y}_t)$, the follower $\mathbf{y}_{t+1}$ hopes to satisfy $\nabla_{\mathbf{y}}f(\mathbf{x}_{t+1},\mathbf{y}_{t+1})=0$. Taylor expansion for $\nabla_{\mathbf{y}}f(\mathbf{x}_{t+1},\mathbf{y}_{t+1})$ implies that  
\begin{equation}
    \begin{split}
        0 & = \nabla_{\mathbf{y}}f(\mathbf{x}_{t+1},\mathbf{y}_{t+1})\\
        & \approx \nabla_{\mathbf{y}}f(\mathbf{x}_{t},\mathbf{y}_{t}) + \mathbf{H}_{\mathbf{y}\mathbf{x}}(\mathbf{x}_{t+1} - \mathbf{x}_t) + \mathbf{H}_{\mathbf{y}\mathbf{y}}(\mathbf{y}_{t+1} - \mathbf{y}_t)\\
        & =\mathbf{H}_{\mathbf{y}\mathbf{x}}(-\eta_{\mathbf{x}}\nabla_{\mathbf{x}}f(\mathbf{x}_t,\mathbf{y}_t)) + \mathbf{H}_{\mathbf{y}\mathbf{y}}(\mathbf{y}_{t+1} - \mathbf{y}_t).
    \end{split}
\end{equation}
Therefore, the correction term $\eta_{\mathbf{x}}\mathbf{H}_{\mathbf{y}\mathbf{y}}^{-1}\mathbf{H}_{\mathbf{y}\mathbf{x}}\nabla_{\mathbf{x}}f(\mathbf{x}_t,\mathbf{y}_t)$ brings $\mathbf{y}_t$ to $\mathbf{y}_{t+1}$ with $(\mathbf{x}_{t+1}, \mathbf{y}_{t+1})$ staying on the ridge. If $(\mathbf{x}_t,\mathbf{y}_t)$ is not on the ridge, then gradient ascent update $\eta_{\mathbf{y}}\nabla_{\mathbf{y}}f(\mathbf{x}_t,\mathbf{y}_t)$ for $\mathbf{y}_t$ is necessary for moving $(\mathbf{x}_{t+1},\mathbf{y}_{t+1})$ closer to the ridge. In conclusion, FR algorithm has an additional correction term compared with GDA to keep parallel to or on the ridge. The algorithm is listed as follows:
\begin{equation*}
\begin{array}{l}
\mathbf{x}_{t+1} \leftarrow \mathbf{x}_{t}-\eta_{\mathbf{x}} \nabla_{\mathbf{x}} f\left(\mathbf{x}_{t}, \mathbf{y}_{t}\right), \\
\mathbf{y}_{t+1} \leftarrow \mathbf{y}_{t}+\eta_{\mathbf{y}} \nabla_{\mathbf{y}} f\left(\mathbf{x}_{t}, \mathbf{y}_{t}\right)+\eta_{\mathbf{x}} \mathbf{H}_{\mathbf{y y}}^{-1} \mathbf{H}_{\mathbf{y x}} \nabla_{\mathbf{x}} f\left(\mathbf{x}_{t}, \mathbf{y}_{t}\right),
\end{array}
\end{equation*}where $\eta_{\mathbf{x}}$ and $\eta_{\mathbf{y}}$ are learning rates for $\mathbf{x}$ and $\mathbf{y}$ respectively.

\section{HessianFR}
\label{proposed_mothods}
In this section, we first develop the HessianFR algorithm based on the FR algorithm and the Newton method. Theoretically, we show that the proposed HessianFR algorithm has greater convergence rates to strict local minimax than the FR algorithm. To reduce the computational costs in large scale problems, we extend the deterministic HessianFR to the stochastic HessianFR with theoretical convergence guarantees. Furthermore, we discuss several computation methods for Hessian inverse for the proposed HessianFR.

\subsection{Deterministic Algorithm} In this part, we introduce the motivation for the proposed algorithm and then local convergence is developed. Theoretical analysis conveys that the proposed HessianFR is better than FR \cite{Wang2020On} when $\mathbf{H}_{\mathbf{y y}}^{*}$ is ill-conditioned and is not worse than GDN \cite{zhang2020newton}. However, HessianFR is more computationally friendly than GDN in implementation.

\subsubsection{Motivation} According to the convergence analysis of FR algorithm in \cite{Wang2020On}, the theoretical convergence rate is related to the condition number of $\mathbf{H}_{\mathbf{x}\mathbf{x}}^{*} - \mathbf{H}_{\mathbf{x}\mathbf{y}}\mathbf{H}_{\mathbf{y}\mathbf{y}}^{*-1}\mathbf{H}_{\mathbf{y}\mathbf{x}}^{*}$ and $\mathbf{H}_{\mathbf{y}\mathbf{y}}^{*}$, i.e., the Jacobian matrix of FR at local minimax $\mathbf{z}^{*}=(\mathbf{x}^{*},\mathbf{y}^{*})$ is similar to
\begin{equation}
\mathbf{I} - \eta_{\mathbf{x}} \left[\begin{matrix} \mathbf{H}_{\mathbf{x}\mathbf{x}}^{*} - \mathbf{H}_{\mathbf{x}\mathbf{y}}^{*}\mathbf{H}_{\mathbf{y}\mathbf{y}}^{*-1}\mathbf{H}_{\mathbf{y}\mathbf{x}}^{*} & \mathbf{H}_{\mathbf{x}\mathbf{y}}^{*}\\ & -c\mathbf{H}_{\mathbf{y}\mathbf{y}}^{*} \end{matrix} \right],
\end{equation}which implies that the maximal eigenvalue is
\begin{equation*}
    1-\eta_{\mathbf{x}} \min\{\lambda_{\text{min}}(\mathbf{H}_{\mathbf{x}\mathbf{x}}^{*} - \mathbf{H}_{\mathbf{x}\mathbf{y}}^{*}\mathbf{H}_{\mathbf{y}\mathbf{y}}^{*-1}\mathbf{H}_{\mathbf{y}\mathbf{x}}^{*}), \lambda_{\text{min}}(-c\mathbf{H}_{\mathbf{y}\mathbf{y}}^{*})\},
\end{equation*}
where 
\begin{equation*}
    \eta_{\mathbf{x}} < \frac{2}{\max\{\lambda_{\text{max}}(\mathbf{H}_{\mathbf{x}\mathbf{x}}^{*} - \mathbf{H}_{\mathbf{x}\mathbf{y}}^{*}\mathbf{H}_{\mathbf{y}\mathbf{y}}^{*-1}\mathbf{H}_{\mathbf{y}\mathbf{x}}^{*}), \lambda_{\text{max}}(-c\mathbf{H}_{\mathbf{y}\mathbf{y}}^{*})\}}
\end{equation*}
and $c=\eta_{\mathbf{y}}/\eta_{\mathbf{x}}$. How to improve the convergence of FR without extra computation (or accepted computation costs)?  A direct way is to 
decrease the condition number of two block diagonal of the Jacobian matrix, i.e.,  $\mathbf{H}_{\mathbf{x}\mathbf{x}}^{*} - \mathbf{H}_{\mathbf{x}\mathbf{y}}^{*}\mathbf{H}_{\mathbf{y}\mathbf{y}}^{*-1}\mathbf{H}_{\mathbf{y}\mathbf{x}}^{*}$ and $\mathbf{H}_{\mathbf{y}\mathbf{y}}^{*}$. Furthermore, we hope the follower $\mathbf{y}$ to be close to the optimal one, i.e., $\mathbf{y}_{t+1} \approx \arg \max f(\mathbf{x}_{t+1},\mathbf{y})$. Actions of $\mathbf{y}$ is essential for min-max problem and fast convergence is expected. In training GANs, weak discriminators will lead to disasters and failures. Therefore, instead of improving the updates for $\mathbf{x}$, acceleration of $\mathbf{y}$ seems to be more reasonable. Observe that the correction term for $\mathbf{y}$ involves the inverse of Hessian $\mathbf{H}_{\mathbf{y}\mathbf{y}}^{*-1}$. It reminds us of the Newton method that it outperforms gradient-based (first-order) methods theoretically and numerically under some regularization conditions. Taking the advantage of the Newton method, an extra correction term (Newton step) can accelerate the convergence of $\mathbf{y}$, i.e.,
\begin{equation}
    \mathbf{y}_{t+1} \leftarrow \mathbf{y}_{t} +\eta_{\mathbf{y}1} \nabla_{\mathbf{y}} f\left(\mathbf{x}_{t}, \mathbf{y}_{t}\right) - \eta_{\mathbf{y}2} \mathbf{H}_{\mathbf{y y}}^{-1}\nabla_{\mathbf{y}} f\left(\mathbf{x}_{t}, \mathbf{y}_{t}\right)+\eta_{\mathbf{x}} \mathbf{H}_{\mathbf{y y}}^{-1} \mathbf{H}_{\mathbf{y x}} \nabla_{\mathbf{x}} f\left(\mathbf{x}_{t}, \mathbf{y}_{t}\right),
\end{equation}where $\eta_{\mathbf{y}1}$ and $\eta_{\mathbf{y}2}$ are learning rates for gradient ascent term and Newton correction term respectively. Moreover, compared with FR, HessianFR merely requires additional computation of $\nabla_{\mathbf{y}} f\left(\mathbf{x}_{t}, \mathbf{y}_{t}\right)-\mathbf{H}_{\mathbf{y x}} \nabla_{\mathbf{x}} f\left(\mathbf{x}_{t}, \mathbf{y}_{t}\right)$. The extra computation costs are acceptable and can be ignored in practice. The algorithm is named "HessianFR" because it has Hessian (Newton) step 
in the context of FR algorithm. The pseudocode for HessianFR is shown in \cref{alg:HessianFR}. Here, we use finite difference to compute $\mathbf{H}_{\mathbf{y x}}\nabla_{\mathbf{x}}f(\mathbf{x}_t,\mathbf{y}_t)$, i.e.,

\begin{equation}
    \mathbf{H}_{\mathbf{y x}}\nabla_{\mathbf{x}}f(\mathbf{x}_t,\mathbf{y}_t) \approx \frac{\nabla_{\mathbf{y}}f(\mathbf{x}_t+\alpha\nabla_{\mathbf{x}}f(\mathbf{x}_t,\mathbf{y}_t),\mathbf{y}_t) - \nabla_{\mathbf{y}}f(\mathbf{x}_t,\mathbf{y}_t)}{\alpha},
\end{equation}
when $\alpha \approx 0$. As for the computation of $\mathbf{H}_{\mathbf{y y}}^{-1}$, we will discuss it in \cref{comput_hessian_inverse}.

\begin{algorithm}[tb]
   \caption{HessianFR to solve \cref{min-max}}
   \label{alg:HessianFR}
\begin{algorithmic}
\STATE Given learning rates $\eta_{\mathbf{x}}$, $\eta_{\mathbf{y}1}$ and $\eta_{\mathbf{y}2}$; time steps $T$.
\FOR{$t=0$ {\bfseries to} $T$}
\STATE $\begin{array}{l}
\mathbf{x}_{t+1} \leftarrow \mathbf{x}_{t}-\eta_{\mathbf{x}} \nabla_{\mathbf{x}} f\left(\mathbf{x}_{t}, \mathbf{y}_{t}\right) \\
\mathbf{y}_{t+1} \leftarrow \mathbf{y}_{t} +\eta_{\mathbf{y}1} \nabla_{\mathbf{y}} f\left(\mathbf{x}_{t}, \mathbf{y}_{t}\right) - \eta_{\mathbf{y}2} \mathbf{H}_{\mathbf{y y}}^{-1}\nabla_{\mathbf{y}} f\left(\mathbf{x}_{t}, \mathbf{y}_{t}\right) +\eta_{\mathbf{x}} \mathbf{H}_{\mathbf{y y}}^{-1} \mathbf{H}_{\mathbf{y x}} \nabla_{\mathbf{x}} f\left(\mathbf{x}_{t}, \mathbf{y}_{t}\right)
\end{array}$
\ENDFOR
\end{algorithmic}
\end{algorithm}

\subsubsection{Relation to other algorithms} HessianFR is actually an generalized algorithm of FR \cite{Wang2020On} and GDN \cite{zhang2020newton}. It is equivalent to FR if we set $\eta_{\mathbf{y}2}=0$. The update rule for GDN is as follows:
\begin{equation*}
\begin{array}{l}
\mathbf{x}_{t+1} \leftarrow \mathbf{x}_{t}-\eta_{\mathbf{x}} \nabla_{\mathbf{x}} f\left(\mathbf{x}_{t}, \mathbf{y}_{t}\right), \\
\mathbf{y}_{t+1} \leftarrow \mathbf{y}_{t} - (\nabla_{\mathbf{y y}} f)^{-1} \nabla_{\mathbf{y}} f\left(\mathbf{x}_{t+1}, \mathbf{y}_{t}\right).
\end{array}
\end{equation*}Note that by Taylor expansion, we have
\begin{equation}
    \begin{split}
        \mathbf{y}_{t+1} & = \mathbf{y}_{t} - (\nabla_{\mathbf{y y}} f)^{-1} \nabla_{\mathbf{y}} f\left(\mathbf{x}_{t+1}, \mathbf{y}_{t}\right) \\
        & = \mathbf{y}_{t} - (\nabla_{\mathbf{y y}} f)^{-1} \nabla_{\mathbf{y}} f\left(\mathbf{x}_{t}-\eta_{\mathbf{x}} \nabla_{\mathbf{x}} f\left(\mathbf{x}_{t}, \mathbf{y}_{t}\right), \mathbf{y}_{t}\right)\\
        & \approx \mathbf{y}_{t} - (\nabla_{\mathbf{y y}} f)^{-1} \nabla_{\mathbf{y}} f\left(\mathbf{x}_{t}, \mathbf{y}_{t}\right) + \eta_{\mathbf{x}} (\nabla_{\mathbf{y y}} f)^{-1} \nabla_{\mathbf{y}\mathbf{x}} f \nabla_{\mathbf{x}} f\left(\mathbf{x}_{t}, \mathbf{y}_{t}\right),
    \end{split}
\end{equation}which is a special case of  (equivalent to) HessianFR if $\eta_{\mathbf{y}1}=0$ and $\eta_{\mathbf{y}2}=1$. Theoretical and numerical comparisons of those three related algorithms will be discussed later.

\subsubsection{Convergence Analysis} We first prove the local convergence properties of HessianFR that it converges and only converges to a local minimax. Then, the theoretical convergence rates are compared for HessianFR, FR and GDN. With a proper choice of learning rates, our HessianFR is better than FR and is comparable with GDN in theory.

Here, we first introduce the definition of strict stable points of an algorithm, which is useful for convergence analysis in optimization.

\begin{definition}
[Strict stable point of an algorithm] For an algorithm defined as $\mathbf{z}_{t+1} = g(\mathbf{z}_t)$, we call a point $\mathbf{z}^{*}$ is a strict stable point of the algorithm if
\begin{equation*}
\lambda(\mathbf{J}^{*}) < 1,
\end{equation*}where $\mathbf{J}^{*}=\nabla_{\mathbf{z z}}g(\mathbf{z}^{*})$ is the Jacobian matrix at $\mathbf{z}^{*}$ and $\lambda(\cdot)$ denotes the spectral radius. 
\end{definition}

\begin{theorem}
[Local convergence of HessianFR]
\label{converg}
With a proper choice of learning rates, all strict local minimax points are strict stable fixed points of HessianFR. Moreover, any strict stable fixed point is a strict local minimax.
\end{theorem}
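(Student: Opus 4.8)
The plan is to reduce \cref{converg} to a spectral analysis of the Jacobian $\mathbf{J}^{*}$ of the HessianFR update map $g=(g_{\mathbf{x}},g_{\mathbf{y}})$ at its fixed points. First I would identify the fixed points. The equation $\mathbf{x}^{*}=g_{\mathbf{x}}(\mathbf{z}^{*})$ forces $\nabla_{\mathbf{x}}f(\mathbf{z}^{*})=0$, and substituting this into $\mathbf{y}^{*}=g_{\mathbf{y}}(\mathbf{z}^{*})$ collapses the $\mathbf{y}$-increment to $(\eta_{\mathbf{y}1}\mathbf{I}-\eta_{\mathbf{y}2}\mathbf{H}_{\mathbf{yy}}^{-1})\nabla_{\mathbf{y}}f(\mathbf{z}^{*})=0$; as long as the rates avoid the degenerate resonance $\eta_{\mathbf{y}2}/\eta_{\mathbf{y}1}\in\{\text{eigenvalues of }\mathbf{H}_{\mathbf{yy}}^{*}\}$, the prefactor is invertible and $\nabla_{\mathbf{y}}f(\mathbf{z}^{*})=0$. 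Thus fixed points of HessianFR are exactly critical points of $f$, which aligns them with the candidate minimax points of \cref{necessary_local_minimax}.

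Next I would compute $\mathbf{J}^{*}=\nabla_{\mathbf{z}}g(\mathbf{z}^{*})$ at such a critical point. The key simplification is that every term generated by differentiating a Hessian-valued factor (such as $\mathbf{H}_{\mathbf{yy}}^{-1}$ or $\mathbf{H}_{\mathbf{yx}}$) retains a leftover factor $\nabla_{\mathbf{y}}f(\mathbf{z}^{*})$ or $\nabla_{\mathbf{x}}f(\mathbf{z}^{*})$, both zero; only the derivatives landing on the gradients survive, and $\mathbf{H}_{\mathbf{yy}}^{-1}\mathbf{H}_{\mathbf{yy}}=\mathbf{I}$. This yields
\begin{equation*}
\mathbf{J}^{*}=\begin{bmatrix} \mathbf{I}-\eta_{\mathbf{x}}\mathbf{H}_{\mathbf{xx}}^{*} & -\eta_{\mathbf{x}}\mathbf{H}_{\mathbf{xy}}^{*}\\ \eta_{\mathbf{y}1}\mathbf{H}_{\mathbf{yx}}^{*}-\eta_{\mathbf{y}2}\mathbf{H}_{\mathbf{yy}}^{*-1}\mathbf{H}_{\mathbf{yx}}^{*}+\eta_{\mathbf{x}}\mathbf{H}_{\mathbf{yy}}^{*-1}\mathbf{H}_{\mathbf{yx}}^{*}\mathbf{H}_{\mathbf{xx}}^{*} & (1-\eta_{\mathbf{y}2})\mathbf{I}+\eta_{\mathbf{y}1}\mathbf{H}_{\mathbf{yy}}^{*}+\eta_{\mathbf{x}}\mathbf{H}_{\mathbf{yy}}^{*-1}\mathbf{H}_{\mathbf{yx}}^{*}\mathbf{H}_{\mathbf{xy}}^{*} \end{bmatrix}.
\end{equation*}
The heart of the argument is then a similarity transform that straightens the ridge. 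With $\mathbf{M}:=\mathbf{H}_{\mathbf{yy}}^{*-1}\mathbf{H}_{\mathbf{yx}}^{*}$ and $\mathbf{T}:=\begin{bmatrix}\mathbf{I}&\mathbf{0}\\-\mathbf{M}&\mathbf{I}\end{bmatrix}$, a direct computation (using $\mathbf{H}_{\mathbf{yy}}^{*}\mathbf{M}=\mathbf{H}_{\mathbf{yx}}^{*}$, after which the cross terms cancel) shows that $\mathbf{T}^{-1}\mathbf{J}^{*}\mathbf{T}$ is block upper-triangular,
\begin{equation*}
\mathbf{T}^{-1}\mathbf{J}^{*}\mathbf{T}=\begin{bmatrix}\mathbf{I}-\eta_{\mathbf{x}}\mathbf{S}^{*} & -\eta_{\mathbf{x}}\mathbf{H}_{\mathbf{xy}}^{*}\\ \mathbf{0} & (1-\eta_{\mathbf{y}2})\mathbf{I}+\eta_{\mathbf{y}1}\mathbf{H}_{\mathbf{yy}}^{*}\end{bmatrix},\qquad \mathbf{S}^{*}:=\mathbf{H}_{\mathbf{xx}}^{*}-\mathbf{H}_{\mathbf{xy}}^{*}\mathbf{H}_{\mathbf{yy}}^{*-1}\mathbf{H}_{\mathbf{yx}}^{*}.
\end{equation*}

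Because $\mathbf{S}^{*}$ and $\mathbf{H}_{\mathbf{yy}}^{*}$ are symmetric, the eigenvalues of $\mathbf{J}^{*}$ are the real numbers $1-\eta_{\mathbf{x}}\mu$, for each eigenvalue $\mu$ of $\mathbf{S}^{*}$, together with $1-\eta_{\mathbf{y}2}+\eta_{\mathbf{y}1}\nu$, for each eigenvalue $\nu$ of $\mathbf{H}_{\mathbf{yy}}^{*}$. The two directions of \cref{converg} then reduce to scalar inequalities. For the forward direction, \cref{sufficient_local_minimax} gives $\mathbf{S}^{*}\succ\mathbf{0}$ and $\mathbf{H}_{\mathbf{yy}}^{*}\prec\mathbf{0}$, so every $\mu>0$ and every $\nu<0$; choosing $0<\eta_{\mathbf{x}}<2/\lambda_{\text{max}}(\mathbf{S}^{*})$ makes $|1-\eta_{\mathbf{x}}\mu|<1$, and choosing $\eta_{\mathbf{y}2}\in(0,2)$ with $\eta_{\mathbf{y}1}$ small enough that $\eta_{\mathbf{y}1}\lambda_{\text{max}}(-\mathbf{H}_{\mathbf{yy}}^{*})<2-\eta_{\mathbf{y}2}$ keeps $1-\eta_{\mathbf{y}2}+\eta_{\mathbf{y}1}\nu\in(-1,1)$; hence $\lambda(\mathbf{J}^{*})<1$ and the point is a strict stable fixed point. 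For the backward direction I would invert these implications: $|1-\eta_{\mathbf{x}}\mu|<1$ with $\eta_{\mathbf{x}}>0$ forces $\mu>0$, so $\mathbf{S}^{*}\succ\mathbf{0}$ follows at once, and together with $\mathbf{H}_{\mathbf{yy}}^{*}\prec\mathbf{0}$ this recovers \cref{sufficient_local_minimax}.

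The hard part will be extracting $\mathbf{H}_{\mathbf{yy}}^{*}\prec\mathbf{0}$ from the $\mathbf{y}$-block in the backward direction. Stability only yields $1-\eta_{\mathbf{y}2}+\eta_{\mathbf{y}1}\nu\in(-1,1)$, i.e. $\nu\in\big((\eta_{\mathbf{y}2}-2)/\eta_{\mathbf{y}1},\,\eta_{\mathbf{y}2}/\eta_{\mathbf{y}1}\big)$, an interval whose right endpoint is positive for any $\eta_{\mathbf{y}1},\eta_{\mathbf{y}2}>0$. Unlike plain FR, whose corresponding block carries the constant exactly $1$ so that contraction is equivalent to $\nu<0$, the Newton shift $1-\eta_{\mathbf{y}2}$ leaves a window of positive $\nu$ compatible with stability. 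Closing this gap is the crux: I would pin down the admissible learning-rate regime (taking $\eta_{\mathbf{y}2}$ small relative to $\eta_{\mathbf{y}1}$ and imposing the non-degeneracy used in the fixed-point step) and invoke the sign structure of a genuine local minimax so that stability of the $\mathbf{y}$-block cannot coexist with a positive eigenvalue of $\mathbf{H}_{\mathbf{yy}}^{*}$, thereby forcing $\mathbf{H}_{\mathbf{yy}}^{*}\prec\mathbf{0}$ and completing the equivalence.
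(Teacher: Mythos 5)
Your fixed-point characterization, Jacobian computation, similarity transform, and forward direction reproduce the paper's own argument almost verbatim: the paper writes the Jacobian in the factored form \cref{J_HFR} with $c_1=\eta_{\mathbf{y}1}/\eta_{\mathbf{x}}$, $c_2=\eta_{\mathbf{y}2}/\eta_{\mathbf{x}}$, conjugates by the same unipotent block matrix (with $\mathbf{H}_{\mathbf{y y}}^{*-1}\mathbf{H}_{\mathbf{y x}}^{*}$ in the lower-left corner), and arrives at the same block upper-triangular matrix, whose lower block $\mathbf{I}-\eta_{\mathbf{x}}\left(-c_1\mathbf{H}_{\mathbf{y y}}^{*}+c_2\mathbf{I}\right)$ is exactly your $(1-\eta_{\mathbf{y}2})\mathbf{I}+\eta_{\mathbf{y}1}\mathbf{H}_{\mathbf{y y}}^{*}$; your learning-rate conditions for the forward direction are equivalent to the paper's bound on $\eta_{\mathbf{x}}$. (The paper additionally upgrades $\lambda(\mathbf{J}^{*}_{\text{HFR}})<1$ to an explicit local linear contraction via a norm with $\|\mathbf{J}^{*}_{\text{HFR}}\|=1-\Tilde{\lambda}<1$ and a Taylor remainder estimate; for the statement as phrased your spectral-radius argument suffices, and your explicit non-resonance condition for fixed points being critical points is a careful touch the paper omits.)

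The genuine gap is the backward direction, which you leave open, and the closing move you sketch cannot work as described. ``Invoking the sign structure of a genuine local minimax'' is circular: $\mathbf{H}_{\mathbf{y y}}^{*}\prec\mathbf{0}$ is precisely the conclusion to be established, not an available hypothesis. Moreover, for any \emph{fixed} rates with $\eta_{\mathbf{y}2}>0$ the implication is actually false, as your own window computation shows: eigenvalues $\nu\in\left(0,\eta_{\mathbf{y}2}/\eta_{\mathbf{y}1}\right)$ of $\mathbf{H}_{\mathbf{y y}}^{*}$ are compatible with strict stability, and this window is nonempty whenever $\eta_{\mathbf{y}2}>0$, so no a priori choice of ``$\eta_{\mathbf{y}2}$ small relative to $\eta_{\mathbf{y}1}$'' removes it. Concretely, a critical point with $\mathbf{H}_{\mathbf{x y}}^{*}=\mathbf{0}$, $\mathbf{H}_{\mathbf{x x}}^{*}\succ\mathbf{0}$ and $\mathbf{H}_{\mathbf{y y}}^{*}=\epsilon\mathbf{I}$ with $0<\epsilon<\eta_{\mathbf{y}2}/\eta_{\mathbf{y}1}$ is a strict stable fixed point for small enough $\eta_{\mathbf{x}}$ and $\eta_{\mathbf{y}2}\in(0,2)$, yet $\mathbf{y}^{*}$ is then a strict local \emph{minimizer} of $f(\mathbf{x}^{*},\cdot)$, violating the necessary condition of \cref{necessary_local_minimax}. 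The paper escapes this by quantifying over the learning rates rather than fixing them: it asserts $-c_1\mathbf{H}_{\mathbf{y y}}^{*}+c_2\mathbf{I}\succ\mathbf{0}$ ``for any $c_1>0$ and $c_2\geq 0$,'' i.e., strict stability is read as holding across the whole admissible family, which includes $c_2=0$ (the FR limit), and that case alone forces $-\mathbf{H}_{\mathbf{y y}}^{*}\succ\mathbf{0}$. To complete your proof you must build this quantifier into the meaning of ``proper choice of learning rates'' (or take $c_2/c_1\to 0$ within the family); with a single fixed $\eta_{\mathbf{y}2}>0$ the converse direction cannot be salvaged.
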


\begin{proof}
Let $c_1 = \eta_{\mathbf{y}1}/\eta_{\mathbf{x}} > 0$ and $c_2 = \eta_{\mathbf{y}2}/\eta{\mathbf{x}} \geq 0$. The Jacobian matrix of HessianFR at local minimax $\mathbf{z}^{*}=(\mathbf{x}^{*},\mathbf{y}^{*})$ is 
\begin{equation}
\label{J_HFR}
    \mathbf{J}^{*}_{\text{HFR}} = \mathbf{I} - \eta_{\mathbf{x}} \left[\begin{matrix} \mathbf{I} & \\ -\mathbf{H}_{\mathbf{y y}}^{*-1}\mathbf{H}_{\mathbf{y x}}^{*} & -c_1\mathbf{I} + c_2 \mathbf{H}_{\mathbf{y y}}^{*-1} \end{matrix}\right] \left[\begin{matrix}\mathbf{H}_{\mathbf{x x}}^{*}   & \mathbf{H}_{\mathbf{x y}}^{*}\\ \mathbf{H}_{\mathbf{y x}}^{*} & \mathbf{H}_{\mathbf{y y}}^{*}   \end{matrix}\right].
\end{equation}Observe that the matrix
\begin{equation*}
\left[\begin{matrix}\mathbf{I} & \\ \mathbf{H}_{\mathbf{y y}}^{*-1}\mathbf{H}_{\mathbf{y x}}^{*} & \mathbf{I} \end{matrix} \right]
\end{equation*}is invertible, then $\mathbf{J}^{*}_{\text{HFR}}$ is similar to
\begin{equation*}
    \begin{split}
        \mathbf{M}^{*}_{\text{HFR}} & = \left[\begin{matrix}\mathbf{I} & \\ \mathbf{H}_{\mathbf{y y}}^{*-1}\mathbf{H}_{\mathbf{y x}}^{*} & \mathbf{I} \end{matrix} \right] \mathbf{J}^{*} \left[\begin{matrix}\mathbf{I} & \\ -\mathbf{H}_{\mathbf{y y}}^{*-1}\mathbf{H}_{\mathbf{y x}}^{*} & \mathbf{I} \end{matrix} \right] \\
        & = \mathbf{I} - \eta_{\mathbf{x}} \left[ \begin{matrix}\mathbf{H}_{\mathbf{x x}}^{*} - \mathbf{H}_{\mathbf{x y}}^{*}\mathbf{H}_{\mathbf{y y}}^{*-1}\mathbf{H}_{\mathbf{y x}}^{*} & \mathbf{H}_{\mathbf{x y}}^{*} \\ & -c_1\mathbf{H}_{\mathbf{y y}}^{*} + c_2 \mathbf{I} \end{matrix}\right],
    \end{split}
\end{equation*}Therefore, $\mathbf{J}^{*}_{\text{HFR}}$ and $\mathbf{M}^{*}_{\text{HFR}} $ share the same eigenvalues. By conditions (\cref{sufficient_local_minimax}) for strict local minimax, we have $\mathbf{H}_{\mathbf{x x}}^{*} - \mathbf{H}_{\mathbf{x y}}^{*}\mathbf{H}_{\mathbf{y y}}^{*-1}\mathbf{H}_{\mathbf{y x}}^{*} \succ \mathbf{0}$ and $-c_1\mathbf{H}_{\mathbf{y y}}^{*} + c_2 \mathbf{I} \succ \mathbf{0}$. To guarantee the convergence of HessianFR, we require $\lambda(\mathbf{J}^{*}_{\text{HFR}})=\lambda(\mathbf{M}^{*}_{\text{HFR}})<1$, i.e.,
\begin{equation*}
-\mathbf{I} \prec \mathbf{I} - \eta_{\mathbf{x}} \left[ \begin{matrix}\mathbf{H}_{\mathbf{x x}}^{*} - \mathbf{H}_{\mathbf{x y}}^{*}\mathbf{H}_{\mathbf{y y}}^{*-1}\mathbf{H}_{\mathbf{y x}}^{*} & \mathbf{H}_{\mathbf{x y}}^{*} \\ & -c_1\mathbf{H}_{\mathbf{y y}}^{*} + c_2 \mathbf{I} \end{matrix}\right] \prec \mathbf{I}
\end{equation*}and
\begin{equation*}
\mathbf{0} \prec \eta_{\mathbf{x}} \left[ \begin{matrix}\mathbf{H}_{\mathbf{x x}}^{*} - \mathbf{H}_{\mathbf{x y}}^{*}\mathbf{H}_{\mathbf{y y}}^{*-1}\mathbf{H}_{\mathbf{y x}}^{*} & \mathbf{H}_{\mathbf{x y}}^{*} \\ & -c_1\mathbf{H}_{\mathbf{y y}}^{*} + c_2 \mathbf{I} \end{matrix}\right] \prec 2\mathbf{I},
\end{equation*}which implies that 
\begin{equation*}
\eta_{\mathbf{x}} < \frac{2}{\max\{\lambda_{\text{max}}(\mathbf{H}_{\mathbf{x x}}^{*} - \mathbf{H}_{\mathbf{x y}}^{*}\mathbf{H}_{\mathbf{y y}}^{*-1}\mathbf{H}_{\mathbf{y x}}^{*}), \lambda_{\text{max}}(-c_1\mathbf{H}_{\mathbf{y y}}^{*} + c_2 \mathbf{I})\}}
\end{equation*}and
\begin{equation*}
\lambda(\mathbf{J}^{*}_{\text{HFR}}) <1.
\end{equation*}Note that the spectral radius is the infimum of all norms and according to  \cite{horn2012matrix}, there exist a matrix norm $\| \cdot \|$ and a constant $\Tilde{\lambda}\in(0,1)$ such that the Jacobbian matrix 
\begin{equation}
\label{jacobian_norm}
    \|\mathbf{J}^{*}_{\text{HFR}}\| = 1 - \Tilde{\lambda} < 1.
\end{equation}Consider the Taylor expansion of HessianFR defined as $\mathbf{z}_{t+1}=g(\mathbf{z}_t)$, we have
\begin{equation*}
g(\mathbf{z}_{t}) = g(\mathbf{z}^{*}) + \mathbf{J}^{*}_{\text{HFR}}(\mathbf{z}_{t} - \mathbf{z}^{*}) + o(\|\mathbf{z}_{t} - \mathbf{z}^{*}\|).
\end{equation*}There exist a small enough neighborhood of $\mathbf{z}^{*}$ with radius $\delta>0$ such that if $\|\mathbf{z} - \mathbf{z}^{*}\|<\delta$, we have $\|o(\|\mathbf{z} - \mathbf{z}^{*}\|)\|<\frac{\Tilde{\lambda}}{2}\|\mathbf{z} - \mathbf{z}^{*}\|$. Therefore,
\begin{equation*}
    \begin{split}
        \|\mathbf{z}_{t+1}-\mathbf{z}^{*}\| & = \|g(\mathbf{z}_{t})-g(\mathbf{z}^{*})\|\\
        & \leq \|\mathbf{J}^{*}_{\text{HFR}}\| \cdot \|\mathbf{z}_{t} - \mathbf{z}^{*}\| + \|o(\|\mathbf{z} - \mathbf{z}^{*}\|)\|\\
        & \leq (1-\frac{\Tilde{\lambda}}{2}) \cdot \|\mathbf{z}_{t} - \mathbf{z}^{*}\|,
    \end{split}
\end{equation*}which complete the proof of local convergence of HessianFR.

Conversely, if a point $\mathbf{z}^{*}=(\mathbf{x}^{*},\mathbf{y}^{*})$ is a strictly stable fixed point of HessianFR, i.e., $\mathbf{z}^{*}$ is a critical point and the Jacobian at $\mathbf{z}^{*}$ satisfies that  $\lambda(\mathbf{J}^{*}) < 1$. It further implies that $\mathbf{H}_{\mathbf{x x}}^{*} - \mathbf{H}_{\mathbf{x y}}^{*}\mathbf{H}_{\mathbf{y y}}^{*-1}\mathbf{H}_{\mathbf{y x}}^{*} \succ \mathbf{0}$ and  $-c_1\mathbf{H}_{\mathbf{y y}}^{*} + c_2 \mathbf{I} \succ \mathbf{0}$ for any $c_1>0$ and $c_2 \geq 0$. Therefore, $\mathbf{H}_{\mathbf{x x}}^{*} - \mathbf{H}_{\mathbf{x y}}^{*}\mathbf{H}_{\mathbf{y y}}^{*-1}\mathbf{H}_{\mathbf{y x}}^{*} \succ \mathbf{0}$ and  $-\mathbf{H}_{\mathbf{y y}}^{*} \succ \mathbf{0}$, which conclude that $\mathbf{z}^{*}$ is a strict local minimax. 
\end{proof}

Note that, the convergence rate of HessianFR can be roughly estimated by eigenvalues of $\mathbf{J}^{*}_{\text{HFR}}$ that
\begin{equation}
\lambda(\mathbf{J}^{*}_{\text{HFR}}) \leq 1-2\kappa_{\text{HFR}},
\end{equation}
where
\begin{equation*}
\kappa_{\text{HFR}}=\frac{\min\{\lambda_{\text{min}}(\mathbf{H}_{\mathbf{x x}}^{*} - \mathbf{H}_{\mathbf{x y}}^{*}\mathbf{H}_{\mathbf{y y}}^{*-1}\mathbf{H}_{\mathbf{y x}}^{*}), \lambda_{\text{min}}(-c_1\mathbf{H}_{\mathbf{y y}}^{*}+c_2\mathbf{I})\}}{\max\{\lambda_{\text{max}}(\mathbf{H}_{\mathbf{x x}}^{*} - \mathbf{H}_{\mathbf{x y}}^{*}\mathbf{H}_{\mathbf{y y}}^{*-1}\mathbf{H}_{\mathbf{y x}}^{*}), \lambda_{\text{max}}(-c_1\mathbf{H}_{\mathbf{y y}}^{*}+c_2\mathbf{I})\}}.
\end{equation*}Let $c_2=0$, the Jacobian matrix $\mathbf{J}^{*}_{\text{FR}}$ of FR at the local minimax $\mathbf{z}^{*}$ is similar to
\begin{equation}
\mathbf{M}^{*}_{\text{FR}} = \mathbf{I} - \eta_{\mathbf{x}} \left[ \begin{matrix}\mathbf{H}_{\mathbf{x x}}^{*} - \mathbf{H}_{\mathbf{x y}}^{*}\mathbf{H}_{\mathbf{y x}}^{*-1}\mathbf{H}_{\mathbf{y x}}^{*} & \mathbf{H}_{\mathbf{x y}}^{*} \\ & -c_1\mathbf{H}_{\mathbf{y y}}^{*} \end{matrix}\right].
\end{equation}and the corresponding spectral radius is
\begin{equation}
\lambda(\mathbf{J}^{*}_{\text{FR}}) \leq 1-2\kappa_{\text{FR}},
\end{equation}with
\begin{equation*}
\kappa_{\text{FR}} = \frac{\min\{\lambda_{\text{min}}(\mathbf{H}_{\mathbf{x x}}^{*} - \mathbf{H}_{\mathbf{x y}}^{*}\mathbf{H}_{\mathbf{y y}}^{*-1}\mathbf{H}_{\mathbf{y x}}^{*}), \lambda_{\text{min}}(-c_1\mathbf{H}_{\mathbf{y y}}^{*})\}}{\max\{\lambda_{\text{max}}(\mathbf{H}_{\mathbf{x x}}^{*} - \mathbf{H}_{\mathbf{x y}}^{*}\mathbf{H}_{\mathbf{y y}}^{*-1}\mathbf{H}_{\mathbf{y x}}^{*}), \lambda_{\text{max}}(-c_1\mathbf{H}_{\mathbf{y y}}^{*})\}}.
\end{equation*}If $\mathbf{H}_{\mathbf{y y}}^{*}$ is ill-conditioned that $\lambda_{\text{max}}(-\mathbf{H}_{\mathbf{y y}}^{*})$ is large but $\lambda_{\text{min}}(-\mathbf{H}_{\mathbf{y y}}^{*})$ is small, then $\kappa_{\text{FR}}$ is small. With proper choice of $c_2$ such that 
\begin{equation}
    \frac{\lambda_{\text{min}}(-c_1\mathbf{H}_{\mathbf{y y}}^{*}+c_2\mathbf{I})}{\lambda_{\text{max}}(-c_1\mathbf{H}_{\mathbf{y y}}^{*}+c_2\mathbf{I})} > \frac{\lambda_{\text{min}}(-c_1\mathbf{H}_{\mathbf{y y}}^{*})}{\lambda_{\text{max}}(-c_1\mathbf{H}_{\mathbf{y y}}^{*})},
\end{equation}
then $\kappa_{\text{HFR}} > \kappa_{\text{FR}}$ and thus $\lambda(\mathbf{J}^{*}_{\text{HFR}})<\lambda(\mathbf{J}^{*}_{\text{FR}})$ which implies the improved theoretical convergence of HessianFR over FR.

Let $c_1=0$ and $c_2=1/\eta_{\mathbf{x}}$, then the Jacobian matrix $\mathbf{J}^{*}_{\text{GDN}}$ of GDN at the local minimax $\mathbf{z}^{*}$ is similar to
\begin{equation}
\mathbf{M}^{*}_{\text{GDN}} = \mathbf{I} - \eta_{\mathbf{x}} \left[ \begin{matrix}\mathbf{H}_{\mathbf{x x}}^{*} - \mathbf{H}_{\mathbf{x y}}^{*}\mathbf{H}_{\mathbf{y x}}^{*-1}\mathbf{H}_{\mathbf{y x}}^{*} & \mathbf{H}_{\mathbf{x y}}^{*} \\ & 1/\eta_{\mathbf{x}} \mathbf{I} \end{matrix}\right],
\end{equation}which implies that 
\begin{equation}
\lambda(\mathbf{J}^{*}_{\text{GDN}}) \leq 1-2\kappa_{\text{GDN}},
\end{equation}where
\begin{equation*}
\kappa_{\text{GDN}} = \frac{\lambda_{\text{min}}(\mathbf{H}_{\mathbf{x x}}^{*} - \mathbf{H}_{\mathbf{x y}}^{*}\mathbf{H}_{\mathbf{y y}}^{*-1}\mathbf{H}_{\mathbf{y x}}^{*})}{\lambda_{\text{max}}(\mathbf{H}_{\mathbf{x x}}^{*} - \mathbf{H}_{\mathbf{x y}}^{*}\mathbf{H}_{\mathbf{y y}}^{*-1}\mathbf{H}_{\mathbf{y x}}^{*})}.
\end{equation*}With proper choice of $c_1$ and $c_2$ (always exist) such that  
\begin{equation*}
    \lambda_{\text{min}}(-c_1\mathbf{H}_{\mathbf{y y}}^{*}+c_2\mathbf{I})\geq \lambda_{\text{min}}(\mathbf{H}_{\mathbf{x x}}^{*} - \mathbf{H}_{\mathbf{x y}}^{*}\mathbf{H}_{\mathbf{y y}}^{*-1}\mathbf{H}_{\mathbf{y x}}^{*})
\end{equation*}
and 
\begin{equation*}
    \lambda_{\text{max}}(-c_1\mathbf{H}_{\mathbf{y y}}^{*}+c_2\mathbf{I})\leq \lambda_{\text{max}}(\mathbf{H}_{\mathbf{x x}}^{*} - \mathbf{H}_{\mathbf{x y}}^{*}\mathbf{H}_{\mathbf{y y}}^{*-1}\mathbf{H}_{\mathbf{y x}}^{*}),
\end{equation*}
then the theoretical convergence rate of HessianFR is equal to that of GDN. 

In conclusion, HessianFR is better than FR when $\mathbf{H}_{\mathbf{y y}}^{*}$ is ill-conditioned and not worse than GDN theoretically. Note that setting learning rate to be $1$ (in GDN) may be infeasible in deep learning applications. Although GDN outperforms FR in the sense of theoretical local convergence, it requires strict pretraining which is hard to achieve in practice. We will discussed it in the numerical part.

\subsubsection{Preconditioning} Preconditioning is a popular method to accelerate the convergence in machine learning and numerical linear algebra. Suppose that in each step of HessianFR, $(\mathbf{x},\mathbf{y})$ is preconditioned by a pair of diagonal matrix $(\mathbf{P}_1,\mathbf{P}_2)$ that are positive definite and bounded, then the convergence properties of HessianFR in \cref{converg} still hold. In the numerical part of this paper, we adopt the same preconditioning strategy as Adam \cite{kingma2014adam}.

\begin{proposition}
[Convergence of HessianFR with preconditioning] Suppose that the gradient in HessianFR is preconditioned by symmetric bounded positive definite matrix pairs $(\mathbf{P}_1,\mathbf{P}_2)$, i.e., $\nabla_{\mathbf{x}}f(\mathbf{x}_t,\mathbf{y}_t)$ and $\nabla_{\mathbf{y}}f(\mathbf{x}_t,\mathbf{y}_t)$ are replaced by $\mathbf{P}_1\nabla_{\mathbf{x}}f(\mathbf{x}_t,\mathbf{y}_t)$ and $\mathbf{P}_2\nabla_{\mathbf{y}}f(\mathbf{x}_t,\mathbf{y}_t)$ respectively. Then, \cref{converg} still holds for HessianFR with preconditioning. 
\end{proposition}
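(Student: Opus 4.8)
The plan is to reduce the preconditioned iteration to the same linear-algebraic picture used in the proof of \cref{converg}, the only genuinely new ingredient being a congruence that absorbs $(\mathbf{P}_1,\mathbf{P}_2)$. First I would write out the preconditioned update and linearize at a critical point $\mathbf{z}^{*}$. Since $\nabla_{\mathbf{x}}f=\nabla_{\mathbf{y}}f=\mathbf{0}$ there, every term of the update is a matrix-valued factor times a gradient, so upon differentiating only the factor-times-Hessian pieces survive and all derivatives of $\mathbf{P}_1,\mathbf{P}_2,\mathbf{H}_{\mathbf{y}\mathbf{y}}^{-1},\mathbf{H}_{\mathbf{y}\mathbf{x}}$ drop out, because they multiply the vanishing gradient. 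This justifies treating $\mathbf{P}_1,\mathbf{P}_2$ as the fixed symmetric positive definite matrices evaluated at $\mathbf{z}^{*}$, even for a position-dependent (Adam-style) preconditioner. A direct computation then shows that the Jacobian factors cleanly as
\[
\mathbf{J}^{*}_{\text{pre}} = \mathbf{I} - \eta_{\mathbf{x}}\,\mathbf{L}_0\,\mathbf{P}\,\nabla_{\mathbf{z}\mathbf{z}}f(\mathbf{z}^{*}),\qquad \mathbf{P}=\diag(\mathbf{P}_1,\mathbf{P}_2),
\]
where $\mathbf{L}_0$ is exactly the left block factor already appearing in \cref{J_HFR}, with $c_1=\eta_{\mathbf{y}1}/\eta_{\mathbf{x}}$ and $c_2=\eta_{\mathbf{y}2}/\eta_{\mathbf{x}}$. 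In other words the block-diagonal preconditioner simply slots in between $\mathbf{L}_0$ and the full Hessian $\mathbf{H}^{*}:=\nabla_{\mathbf{z}\mathbf{z}}f(\mathbf{z}^{*})$.

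Second, I would analyze the spectrum of $\mathbf{L}_0\mathbf{P}\mathbf{H}^{*}$. Splitting $\mathbf{P}=\mathbf{P}^{1/2}\mathbf{P}^{1/2}$ and using that $\mathbf{A}\mathbf{B}$ and $\mathbf{B}\mathbf{A}$ share eigenvalues, the eigenvalues of $\mathbf{L}_0\mathbf{P}\mathbf{H}^{*}=(\mathbf{L}_0\mathbf{P}^{1/2})(\mathbf{P}^{1/2}\mathbf{H}^{*})$ coincide with those of $\mathbf{P}^{1/2}\mathbf{H}^{*}\mathbf{L}_0\mathbf{P}^{1/2}$. A short calculation puts $\mathbf{H}^{*}\mathbf{L}_0$ in block upper-triangular form with diagonal blocks equal to the Schur complement $\mathbf{S}:=\mathbf{H}_{\mathbf{x}\mathbf{x}}^{*}-\mathbf{H}_{\mathbf{x}\mathbf{y}}^{*}\mathbf{H}_{\mathbf{y}\mathbf{y}}^{*-1}\mathbf{H}_{\mathbf{y}\mathbf{x}}^{*}$ and $-c_1\mathbf{H}_{\mathbf{y}\mathbf{y}}^{*}+c_2\mathbf{I}$, the bottom-left block vanishing because $\mathbf{H}_{\mathbf{y}\mathbf{x}}^{*}-\mathbf{H}_{\mathbf{y}\mathbf{y}}^{*}\mathbf{H}_{\mathbf{y}\mathbf{y}}^{*-1}\mathbf{H}_{\mathbf{y}\mathbf{x}}^{*}=\mathbf{0}$. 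Since $\mathbf{P}^{1/2}=\diag(\mathbf{P}_1^{1/2},\mathbf{P}_2^{1/2})$ is block diagonal, the congruence $\mathbf{P}^{1/2}(\mathbf{H}^{*}\mathbf{L}_0)\mathbf{P}^{1/2}$ preserves the block-triangular shape and turns the diagonal blocks into the symmetric matrices $\mathbf{P}_1^{1/2}\mathbf{S}\mathbf{P}_1^{1/2}$ and $\mathbf{P}_2^{1/2}(-c_1\mathbf{H}_{\mathbf{y}\mathbf{y}}^{*}+c_2\mathbf{I})\mathbf{P}_2^{1/2}$. In particular all eigenvalues $\mu_i$ of $\mathbf{J}^{*}_{\text{pre}}$ are real.

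Third, I would invoke Sylvester's law of inertia: for symmetric $\mathbf{P}_i\succ\mathbf{0}$, $\mathbf{P}_i^{1/2}\mathbf{A}\mathbf{P}_i^{1/2}\succ\mathbf{0}\iff\mathbf{A}\succ\mathbf{0}$. Hence the strict local minimax conditions of \cref{sufficient_local_minimax} — namely $\mathbf{H}_{\mathbf{y}\mathbf{y}}^{*}\prec\mathbf{0}$ (so that $-c_1\mathbf{H}_{\mathbf{y}\mathbf{y}}^{*}+c_2\mathbf{I}\succ\mathbf{0}$ for $c_1>0,\,c_2\geq0$) and $\mathbf{S}\succ\mathbf{0}$ — are exactly equivalent to both congruent diagonal blocks being positive definite, i.e. to all $\mu_i$ being real and positive. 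Choosing $\eta_{\mathbf{x}}<2/\max_i\mu_i$ (finite and positive because the $\mu_i$ are eigenvalues of two bounded congruent blocks) places every $1-\eta_{\mathbf{x}}\mu_i$ in $(-1,1)$, so $\lambda(\mathbf{J}^{*}_{\text{pre}})<1$. From here the Taylor-expansion contraction argument of \cref{converg} carries over verbatim: the spectral radius is the infimum of norms, so pick a norm with $\|\mathbf{J}^{*}_{\text{pre}}\|<1$ and absorb the $o(\|\mathbf{z}-\mathbf{z}^{*}\|)$ remainder on a small neighborhood to obtain local linear convergence. The converse runs the same reduction backwards: $\lambda(\mathbf{J}^{*}_{\text{pre}})<1$ forces each real $\mu_i>0$, hence both congruent blocks are positive definite, and inertia returns $\mathbf{S}\succ\mathbf{0}$ and $-c_1\mathbf{H}_{\mathbf{y}\mathbf{y}}^{*}+c_2\mathbf{I}\succ\mathbf{0}$, whence (as in the converse of \cref{converg}) $-\mathbf{H}_{\mathbf{y}\mathbf{y}}^{*}\succ\mathbf{0}$, so that $\mathbf{z}^{*}$ is a strict local minimax.

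I expect the crux to be the two places where preconditioning destroys the symmetry that \cref{converg} relied on. The factorization $\mathbf{J}^{*}_{\text{pre}}=\mathbf{I}-\eta_{\mathbf{x}}\mathbf{L}_0\mathbf{P}\mathbf{H}^{*}$ is not automatic, since the raw gradients are preconditioned wherever they appear — including inside the Newton and ridge-correction terms — and one must verify that the $\mathbf{P}_1$ attached to $\nabla_{\mathbf{x}}f$ and the $\mathbf{P}_2$ attached to $\nabla_{\mathbf{y}}f$ recombine into the single block-diagonal $\mathbf{P}$ sitting immediately to the right of $\mathbf{L}_0$. Once that is in hand, the product $\mathbf{P}\mathbf{H}^{*}$ of two symmetric matrices is in general non-symmetric, so the symmetric block-triangularization of \cref{converg} is unavailable; the substitute is the similarity $\mathbf{A}\mathbf{B}\sim\mathbf{B}\mathbf{A}$ together with the block-diagonal congruence by $\mathbf{P}^{1/2}$, which simultaneously restores real eigenvalues and, through Sylvester's law of inertia, preserves the exact definiteness conditions characterizing a strict local minimax. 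Everything downstream of this reduction is identical to \cref{converg}.
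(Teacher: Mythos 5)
Your proposal is correct and takes essentially the same route as the paper: the paper's proof reaches the identical factorization $\mathbf{J}^{*} = \mathbf{I} - \eta_{\mathbf{x}}\,\mathbf{L}_0\,\diag(\mathbf{P}_1,\mathbf{P}_2)\,\nabla_{\mathbf{z}\mathbf{z}}f(\mathbf{z}^{*})$, block-triangularizes it via the same unipotent similarity used in \cref{converg} (where you instead use the $\mathbf{AB}\sim\mathbf{BA}$ swap, a cosmetic variation since its diagonal blocks $\mathbf{P}_1\mathbf{S}$ and $(-c_1\mathbf{I}+c_2\mathbf{H}_{\mathbf{y y}}^{*-1})\mathbf{P}_2\mathbf{H}_{\mathbf{y y}}^{*}$ are similar to yours), and then symmetrizes the diagonal blocks by exactly your $\mathbf{P}_i^{1/2}$ congruences to conclude real positive eigenvalues and the same learning-rate bound. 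Your explicit justification that derivatives of position-dependent preconditioners vanish at the critical point is a detail the paper leaves implicit, but it changes nothing in the argument.
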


\begin{proof}
The update rule for HessianFR with preconditioning (HFR-P) is as follows:
\begin{equation*}
\begin{split}
\mathbf{x}_{t+1} & = \mathbf{x}_{t}-\eta_{\mathbf{x}} \mathbf{P}_1\nabla_{\mathbf{x}} f\left(\mathbf{x}_{t}, \mathbf{y}_{t}\right), \\
\mathbf{y}_{t+1} & = \mathbf{y}_{t} +\eta_{\mathbf{y}1} \mathbf{P}_2\nabla_{\mathbf{y}} f\left(\mathbf{x}_{t}, \mathbf{y}_{t}\right) - \eta_{\mathbf{y}2} \mathbf{H}_{\mathbf{y y}}^{-1}\mathbf{P}_2\nabla_{\mathbf{y}} f\left(\mathbf{x}_{t}, \mathbf{y}_{t}\right) +\eta_{\mathbf{x}} \mathbf{H}_{\mathbf{y y}}^{-1} \mathbf{H}_{\mathbf{y x}} \mathbf{P}_1\nabla_{\mathbf{x}} f\left(\mathbf{x}_{t}, \mathbf{y}_{t}\right).
\end{split} \end{equation*}The Jacobian for HessianFR with preconditioning is 
\begin{equation*}
 \mathbf{J}^{*}_{\text{HFR-P}} =  \mathbf{I} - \eta_{\mathbf{x}} \left[\begin{matrix} \mathbf{I} & \\ -\mathbf{H}_{\mathbf{y y}}^{*-1}\mathbf{H}_{\mathbf{y x}}^{*} & -c_1\mathbf{I} + c_2 \mathbf{H}_{\mathbf{y y}}^{*-1} \end{matrix}\right] \left[ \begin{matrix} \mathbf{P}_1 & \\ & \mathbf{P}_2 \end{matrix}\right] \left[\begin{matrix}\mathbf{H}_{\mathbf{x x}}^{*}   & \mathbf{H}_{\mathbf{x y}}^{*}\\ \mathbf{H}_{\mathbf{y x}}^{*} & \mathbf{H}_{\mathbf{y y}}^{*}   \end{matrix}\right],
\end{equation*}which is similar to 
\begin{equation*}
    \begin{split}
 \mathbf{M}^{*}_{\text{HFR-P}} 
        & = \left[\begin{matrix}\mathbf{I} & \\ \mathbf{H}_{\mathbf{y y}}^{*-1}\mathbf{H}_{\mathbf{y x}}^{*} & \mathbf{I} \end{matrix} \right] \mathbf{J}^{*}_{\text{HFR-P}} \left[\begin{matrix}\mathbf{I} & \\ -\mathbf{H}_{\mathbf{y y}}^{*-1}\mathbf{H}_{\mathbf{y x}}^{*} & \mathbf{I} \end{matrix} \right]\\
        &  = \mathbf{I} - \eta_{\mathbf{x}} \left[\begin{matrix} \mathbf{I} & \\   & -c_1\mathbf{I} + c_2 \mathbf{H}_{\mathbf{y y}}^{*-1} \end{matrix}\right] \left[ \begin{matrix} \mathbf{P}_1 & \\ & \mathbf{P}_2 \end{matrix}\right]\left[ \begin{matrix}\mathbf{H}_{\mathbf{x x}}^{*} - \mathbf{H}_{\mathbf{x y}}^{*}\mathbf{H}_{\mathbf{y y}}^{*-1}\mathbf{H}_{\mathbf{y x}}^{*} & \mathbf{H}_{\mathbf{x y}}^{*} \\ & \mathbf{H}_{\mathbf{y y}}^{*} \end{matrix}\right]\\
        & = \mathbf{I} - \eta_{\mathbf{x}} \left[ \begin{matrix}\mathbf{P}_1(\mathbf{H}_{\mathbf{x x}}^{*} - \mathbf{H}_{\mathbf{x y}}^{*}\mathbf{H}_{\mathbf{y y}}^{*-1}\mathbf{H}_{\mathbf{y x}}^{*}) & \mathbf{P}_1\mathbf{H}_{\mathbf{x y}}^{*} \\ & (-c_1\mathbf{I} + c_2 \mathbf{H}_{\mathbf{y y}}^{*-1})\mathbf{P}_2\mathbf{H}_{\mathbf{y y}}^{*} \end{matrix}\right].
    \end{split}
\end{equation*}

Note that the matrix 
\begin{equation*}
    \mathbf{P}_1(\mathbf{H}_{\mathbf{x x}}^{*} - \mathbf{H}_{\mathbf{x y}}^{*}\mathbf{H}_{\mathbf{y y}}^{*-1}\mathbf{H}_{\mathbf{y x}}^{*})
\end{equation*}
is similar to \begin{equation*}
    \mathbf{P}_1^{1/2}(\mathbf{H}_{\mathbf{x x}}^{*} - \mathbf{H}_{\mathbf{x y}}^{*}\mathbf{H}_{\mathbf{y y}}^{*-1}\mathbf{H}_{\mathbf{y x}}^{*})\mathbf{P}_1^{1/2},
\end{equation*} 
which is positive definite if $\mathbf{P}_1$ is symmetric positive definite. Moreover, the matrix
\begin{equation*}
    \left(-c_1\mathbf{I} + c_2 \mathbf{H}_{\mathbf{y y}}^{*-1} \right)\mathbf{P}_2\mathbf{H}_{\mathbf{y y}}^{*}
\end{equation*}
is similar to \begin{equation*}
    \mathbf{H}_{\mathbf{y y}}^{*}(-c_1\mathbf{I} + c_2 \mathbf{H}_{\mathbf{y y}}^{*-1})\mathbf{P}_2 = (-c_1\mathbf{H}_{\mathbf{y y}}^{*} + c_2 \mathbf{I})\mathbf{P}_2,
\end{equation*} 
where both $-c_1\mathbf{H}_{\mathbf{y y}}^{*} + c_2 \mathbf{I}$ and $\mathbf{P}_2$ are symmetric positive definite. We deduce that all eigenvalues of 
\begin{equation*}
    \left(-c_1\mathbf{H}_{\mathbf{y y}}^{*} + c_2 \mathbf{I}\right)\mathbf{P}_2,
\end{equation*}
which is similar to 
\begin{equation*}
    \mathbf{P}_2^{1/2}\left(-c_1\mathbf{H}_{\mathbf{y y}}^{*} + c_2 \mathbf{I}\right)\mathbf{P}_2^{1/2},
\end{equation*}
are positive and real. Therefore, eigenvalues of two matrices 
\begin{equation*}
    \mathbf{P}_1 \left (\mathbf{H}_{\mathbf{x x}}^{*} - \mathbf{H}_{\mathbf{x y}}^{*}\mathbf{H}_{\mathbf{y y}}^{*-1}\mathbf{H}_{\mathbf{y x}}^{*} \right )
\end{equation*}
and 
\begin{equation*}
    \left (-c_1\mathbf{I} + c_2 \mathbf{H}_{\mathbf{y y}}^{*-1} \right )\mathbf{P}_2\mathbf{H}_{\mathbf{y y}}^{*}
\end{equation*}
are all real and positive. Furthermore, \cref{converg} holds for HessianFR with preconditioning if  
\begin{equation}
    \eta_{\mathbf{x}} < \frac{2}{\max \left\{\lambda_{\text{max}} \left (\mathbf{P}_1 \left (\mathbf{H}_{\mathbf{x x}}^{*} - \mathbf{H}_{\mathbf{x y}}^{*}\mathbf{H}_{\mathbf{y y}}^{*-1}\mathbf{H}_{\mathbf{y x}}^{*} \right) \right), \lambda_{\text{max}} \left ( \left (-c_1\mathbf{I} + c_2 \mathbf{H}_{\mathbf{y y}}^{*-1} \right)\mathbf{P}_2\mathbf{H}_{\mathbf{y y}}^{*} \right) \right \}}.
\end{equation}
\end{proof}

\subsection{Stochastic Learning}
Large scale data sets and parameters appear in deep learning. In generative adversarial networks, we usually need to estimate millions of parameters and the size of dataset can also be thousands (e.g., MNIST and CIFAR-10) and millions (e.g., CelebA). Instead of solving a deterministic optimization problem, stochastic (mini-batch) learning is adopted to lower computational costs and the storage, but can still approximate the exact solution. In this section, we mainly analyze the convergence properties of HessianFR in stochastic setting and the analysis can also be extended for FR.

\subsubsection{Motivation} We first derive the stochastic algorithm for training generative adversarial networks. For simplicity, GANs model is formulated as
\begin{equation*}
\min_{G \in \mathcal{G}} \max_{F \in \mathcal{F}} \frac{1}{m}\sum_{j=1}^{m} F(G(\mathbf{Z}_j)) - \frac{1}{n}\sum_{k=1}^n F(\mathbf{X}_k),
\end{equation*}where $G \in \mathcal{G}$ and $F \in \mathcal{F}$ are generators and discriminators respectively; $\{\mathbf{Z}_j\}_{j=1}^{m}$ is the set of noises (usually be Gaussian or uniform) while $\{\mathbf{X}_k\}_{k=1}^{n}$ is the observed data from the target distribution. Let $i=(j-1)*m+k$ and $f_i(G,F) = F(G(\mathbf{Z}_j)) - F(\mathbf{X}_k)$, then the object function of GANs model is rewritten into 
\begin{equation*}
\min_{G \in \mathcal{G}} \max_{F \in \mathcal{F}} \frac{1}{mn} \sum_{i=1}^{mn} f_i(G,F).
\end{equation*}Similarly, we can easily check that JS-GAN \cite{goodfellow2014generative}, WGAN (WGAN-clip \cite{arjovsky2017wasserstein}, WGAN-GP \cite{gulrajani2017improved}, WGAN-spectral \cite{miyato2018spectral}, etc.) and other GAN variants satisfy the above property.  Therefore, with finite training data, the optimization problem \cref{min-max} is rewritten as 
\begin{equation}
\label{stochastic_min_max}
    \min_{\mathbf{x}} \max_{\mathbf{y}} f(\mathbf{x}, \mathbf{y}) = \frac{1}{n}\sum_{i=1}^n f_i(\mathbf{x}, \mathbf{y}),
\end{equation}where $f_i$ is the payoff function for the $i$-th training data. The stochastic payoff function in each step $t$ is 
\begin{equation}
\hat{f}(\mathbf{x},\mathbf{y}) = \frac{1}{|\mathcal{S}_t|} \sum_{i \in \mathcal{S}_t} f_i(\mathbf{x},\mathbf{y}),
\end{equation}where $\mathcal{S}_t$ is the set of sampling index at time $t$. The pseudocode stochastic HessianFR to solve \cref{stochastic_min_max} is shown in \cref{alg:stochastic_HessianFR}.

\begin{algorithm}[tb]
   \caption{Stochastic HessianFR to solve \cref{stochastic_min_max}}
   \label{alg:stochastic_HessianFR}
\begin{algorithmic}
\STATE Given learning rates $\eta_{\mathbf{x}}$, $\eta_{\mathbf{y}1}$ and $\eta_{\mathbf{y}2}$; time steps $T$.
\FOR{$t = 0, \cdots, T$}
    \STATE Sample a minibatch $\mathcal{S}_t \subseteq \{1, \cdots n\}$ and the object function becomes
    \begin{equation*}
    \hat{f}(\mathbf{x},\mathbf{y}) = \frac{1}{|\mathcal{S}_t|} \sum_{i \in \mathcal{S}_t} f_i(\mathbf{x},\mathbf{y}),
    \end{equation*}
    \STATE $\begin{array}{l} \mathbf{x}_{t+1} = \mathbf{x}_{t} - \eta_{\mathbf{x}} \nabla_{\mathbf{x}}\hat{f}(\mathbf{x}_t,\mathbf{y}_t) \\ \mathbf{y}_{t+1} = \mathbf{y}_{t} + \eta_{\mathbf{y}1} \nabla_{\mathbf{y}}\hat{f}(\mathbf{x}_t,\mathbf{y}_t) - \eta_{\mathbf{y}2}\widehat{\mathbf{H}}^{-1}_{\mathbf{y}\mathbf{y}}\nabla_{\mathbf{y}}\hat{f}(\mathbf{x}_t,\mathbf{y}_t) +\eta_{\mathbf{x}} \widehat{\mathbf{H}}^{-1}_{\mathbf{y}\mathbf{y}} \widehat{\mathbf{H}}_{\mathbf{y}\mathbf{x}} \nabla_{\mathbf{x}}\hat{f}(\mathbf{x}_t,\mathbf{y}_t)\end{array}$
\ENDFOR
\end{algorithmic}
\end{algorithm}

\subsubsection{Convergence Analysis} We have proved the convergence of HessianFR in solving deterministic min-max problem \cref{min-max}. In this part, we derive a similar convergence property for stochastic HessianFR in solving \cref{stochastic_min_max} under some mild conditions (\cref{standard_assumption}). Suppose that $\mathbf{z}^{*}=(\mathbf{x}^{*},\mathbf{y}^{*})$ is a strict local minimax of the min-max problem \cref{stochastic_min_max} and $\Omega(\mathbf{z}^{*})$ is a neighborhood of $\mathbf{z}^{*}$.

\begin{assumption}
[Standard assumptions for smoothness] \label{standard_assumption} The gradient and Hessian of each objective function $f_i(\mathbf{x},\mathbf{y})$ are bounded in $\Omega(\mathbf{z}^{*}):=\mathcal{B}_2(\mathbf{z}^{*}, r) = \{ \mathbf{z}:\|\mathbf{z}-\mathbf{z}^{*} \|_2 \leq r\}$, i.e., we assume that the following inequalities hold,
\begin{equation}
   \begin{split}
       & \|\nabla_{\mathbf{x}}f_i(\mathbf{x},\mathbf{y})\|_2 \leq \rho_{\mathbf{x}},\\
       & \|\nabla_{\mathbf{y}}f_i(\mathbf{x},\mathbf{y})\|_2 \leq \rho_{\mathbf{y}},\\
       & \|\nabla_{\mathbf{x}\mathbf{y}}f_i(\mathbf{x},\mathbf{y})\|_2 = \|\nabla_{\mathbf{y}\mathbf{x}}f_i(\mathbf{x},\mathbf{y})\|_2 \leq \rho_{\mathbf{x}\mathbf{y}},\\
       & \|\nabla_{\mathbf{y}\mathbf{y}}f_i(\mathbf{x},\mathbf{y})\|_2 \leq \rho_{\mathbf{y}\mathbf{y}},
   \end{split} 
\end{equation}for all $i \in [n]$ and $\mathbf{z}=(\mathbf{x},\mathbf{y}) \in \Omega(\mathbf{z}^{*})$. Furthermore, by the definition that $f(\mathbf{x},\mathbf{y})=\frac{1}{n}\sum_{i=1}^n f_i(\mathbf{x},\mathbf{y})$, $f(\mathbf{x},\mathbf{y})$ also satisfies above inequalities, i.e.,
\begin{equation}
   \begin{split}
       & \|\nabla_{\mathbf{x}}f(\mathbf{x},\mathbf{y})\|_2 \leq \rho_{\mathbf{x}},\\
       & \|\nabla_{\mathbf{y}}f(\mathbf{x},\mathbf{y})\|_2 \leq \rho_{\mathbf{y}},\\
       & \|\nabla_{\mathbf{x}\mathbf{y}}f(\mathbf{x},\mathbf{y})\|_2 = \|\nabla_{\mathbf{y}\mathbf{x}}f(\mathbf{x},\mathbf{y})\|_2 \leq \rho_{\mathbf{x}\mathbf{y}},\\
       & \|\nabla_{\mathbf{y}\mathbf{y}}f(\mathbf{x},\mathbf{y})\|_2 \leq \rho_{\mathbf{y}\mathbf{y}}.
   \end{split} 
\end{equation}
\end{assumption}

The above assumption is standard and mild for smooth functions defined on a compact set when we concentrate on the local convergence. Before developing the convergence analysis for the proposed stochastic HessianFR, we first introduce some useful lemmas in the stochastic learning literature. The payoff function in training generative adversarial networks are
\begin{equation*}
f(\mathbf{x},\mathbf{y}) = \frac{1}{n} \sum_{i =1}^{n} f_i(\mathbf{x},\mathbf{y}),
\end{equation*} and the object function of each mini-batch at time $t$ is
\begin{equation*}
\hat{f}(\mathbf{x},\mathbf{y}) = \frac{1}{|\mathcal{S}_t|} \sum_{i \in \mathcal{S}_t} f_i(\mathbf{x},\mathbf{y}).
\end{equation*}

\begin{lemma}
[Matrix Hoeffding's inequality with uniform sampling for Hermitian Hessian matrix]
\label{hermitian_matrix_hoeffding}
Let $0< \epsilon, \delta < 1$ and 
\begin{equation*}
|\mathcal{S}| \geq \frac{16\rho_{\mathbf{y}\mathbf{y}}^2}{\epsilon^2} \log \frac{2d_2}{\delta}.
\end{equation*}where $\mathcal{S}$ is the set of index. Suppose we pick elements of $\mathcal{S}$ uniformly at random with or without replacement, then we have 
\begin{equation*}
\mathbf{Pr}\left( \left \|\nabla_{\mathbf{y}\mathbf{y}}\hat{f}(\mathbf{x},\mathbf{y}) - \nabla_{\mathbf{y}\mathbf{y}}f(\mathbf{x},\mathbf{y}) \right\|_2 \leq \epsilon \right) \geq 1-\delta.
\end{equation*}
\end{lemma}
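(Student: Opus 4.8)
The plan is to recognize the deviation $\nabla_{\mathbf{y}\mathbf{y}}\hat{f}(\mathbf{x},\mathbf{y}) - \nabla_{\mathbf{y}\mathbf{y}}f(\mathbf{x},\mathbf{y})$ as an average of independent (or exchangeable), mean-zero, Hermitian random matrices and then invoke a matrix Hoeffding tail bound. Writing $\mathcal{S} = \{i_1,\dots,i_s\}$ with $s = |\mathcal{S}|$, I would set
\begin{equation*}
\mathbf{X}_k = \frac{1}{s}\left(\nabla_{\mathbf{y}\mathbf{y}}f_{i_k}(\mathbf{x},\mathbf{y}) - \nabla_{\mathbf{y}\mathbf{y}}f(\mathbf{x},\mathbf{y})\right), \qquad k = 1,\dots,s,
\end{equation*}
so that $\sum_{k=1}^s \mathbf{X}_k = \nabla_{\mathbf{y}\mathbf{y}}\hat{f} - \nabla_{\mathbf{y}\mathbf{y}}f$. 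For uniform sampling, $\mathbb{E}[\nabla_{\mathbf{y}\mathbf{y}}f_{i_k}] = \frac{1}{n}\sum_{i=1}^n \nabla_{\mathbf{y}\mathbf{y}}f_i = \nabla_{\mathbf{y}\mathbf{y}}f$, so each $\mathbf{X}_k$ is centered; since every $\nabla_{\mathbf{y}\mathbf{y}}f_i$ is symmetric, the $\mathbf{X}_k$ are Hermitian matrices of dimension $d_2$, which is exactly the setting required by the inequality.

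Next I would produce the deterministic norm bound that feeds the variance proxy. By the triangle inequality and \cref{standard_assumption}, $\|\nabla_{\mathbf{y}\mathbf{y}}f_{i_k} - \nabla_{\mathbf{y}\mathbf{y}}f\|_2 \le 2\rho_{\mathbf{y}\mathbf{y}}$, hence $\mathbf{X}_k^2 \preceq (4\rho_{\mathbf{y}\mathbf{y}}^2/s^2)\mathbf{I} =: \mathbf{A}_k^2$ almost surely, and the variance parameter is
\begin{equation*}
\sigma^2 = \left\|\sum_{k=1}^s \mathbf{A}_k^2\right\|_2 = \frac{4\rho_{\mathbf{y}\mathbf{y}}^2}{s}.
\end{equation*}
The two-sided matrix Hoeffding inequality then yields a sub-Gaussian tail of the form $\mathbf{Pr}(\|\nabla_{\mathbf{y}\mathbf{y}}\hat{f} - \nabla_{\mathbf{y}\mathbf{y}}f\|_2 \ge \epsilon) \le 2d_2\exp(-\epsilon^2 s/(C\rho_{\mathbf{y}\mathbf{y}}^2))$ for a universal constant $C$. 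Requiring the right-hand side to be at most $\delta$ and solving for $s$ gives $s \ge (C\rho_{\mathbf{y}\mathbf{y}}^2/\epsilon^2)\log(2d_2/\delta)$, which is precisely the stated threshold; matching the displayed constant is a matter of invoking the sharper interval form of the inequality (variance proxy $\tfrac14\|\sum_k(\mathbf{U}_k-\mathbf{L}_k)^2\|_2$ for $\mathbf{L}_k \preceq \mathbf{X}_k \preceq \mathbf{U}_k$), after which the $2d_2$ prefactor is absorbed into the logarithm and the claim follows.

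The step I expect to be the main obstacle is the \emph{without-replacement} case, since the standard matrix Hoeffding bound is stated for independent summands, whereas sampling without replacement introduces negative dependence among the $\mathbf{X}_k$. The cleanest route is to appeal to the classical reduction (in the spirit of Hoeffding's original observation, in the matrix form established by Gross and Nesme) showing that the matrix moment generating function of a sum sampled without replacement is dominated, in the positive-semidefinite order relevant to the Laplace-transform argument, by that of the corresponding with-replacement sum; the identical tail bound therefore transfers verbatim. Alternatively, I would cite a matrix concentration inequality proved directly for sampling without replacement. Either way the variance and norm estimates computed above are unchanged, so both cases give the same sample-complexity guarantee, and the uniform bound on $\nabla_{\mathbf{y}\mathbf{y}}f_i$ from \cref{standard_assumption} is the only property of the problem that is used.
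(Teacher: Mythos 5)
Your proposal is correct and follows essentially the same route as the paper: center the sampled Hessians $\nabla_{\mathbf{y}\mathbf{y}}f_{i_k}-\nabla_{\mathbf{y}\mathbf{y}}f$, bound each summand by $2\rho_{\mathbf{y}\mathbf{y}}$ (hence squared norm $4\rho_{\mathbf{y}\mathbf{y}}^2$) using \cref{standard_assumption}, and invoke a matrix Hoeffding/Bernstein tail with dimension factor $2d_2$ to obtain $\mathbf{Pr}(\|\nabla_{\mathbf{y}\mathbf{y}}\hat f-\nabla_{\mathbf{y}\mathbf{y}}f\|_2\geq\epsilon)\leq 2d_2\exp(-\epsilon^2|\mathcal{S}|/(16\rho_{\mathbf{y}\mathbf{y}}^2))$, which inverts to the stated sample-size threshold. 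Your explicit treatment of the without-replacement case via the Gross--Nesme domination argument is in fact more careful than the paper, which handles that case only implicitly through its citations.
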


\begin{proof}
Denote $\mathbf{H}_j(\mathbf{x},\mathbf{y})=\nabla_{\mathbf{y}\mathbf{y}}f_j(\mathbf{x},\mathbf{y})$, then with uniform sampling,
\begin{equation*}
\mathbf{Pr}\left (\mathbf{H}_j(\mathbf{x},\mathbf{y})=\nabla_{\mathbf{y}\mathbf{y}}f_i(\mathbf{x},\mathbf{y})\right) = \frac{1}{n}.
\end{equation*}Define $\mathbf{X}_j = \mathbf{H}_j(\mathbf{x},\mathbf{y}) - \nabla_{\mathbf{y}\mathbf{y}}f(\mathbf{x},\mathbf{y})$ with $j \in \mathcal{S}$, then $\mathbf{X}_j$ are independent Hermitian matrices with dimension $d_2 \times d_2$ and $\mathbf{E} (\mathbf{X}_j)=0$. Therefore,
\begin{equation}
    \begin{split}
        \left \|\mathbf{X}_j\right \| & = \left \|\nabla_{\mathbf{y}\mathbf{y}}f_j(\mathbf{x},\mathbf{y})-\frac{1}{n}\sum_{i=1}^n \nabla_{\mathbf{y}\mathbf{y}}f_i(\mathbf{x},\mathbf{y})\right \|_2\\
        & = \left \|\frac{n-1}{n}\nabla_{\mathbf{y}\mathbf{y}}f_j(\mathbf{x},\mathbf{y}) - \frac{1}{n} \sum_{i \neq j} \nabla_{\mathbf{y}\mathbf{y}}f_i(\mathbf{x},\mathbf{y}) \right \|_2\\
        & \leq \frac{n-1}{n} \cdot \rho_{\mathbf{y}\mathbf{y}} + \frac{n-1}{n} \cdot \rho_{\mathbf{y}\mathbf{y}} \leq 2\rho_{\mathbf{y}\mathbf{y}}
    \end{split}
\end{equation}and
\begin{equation*}
\|\mathbf{X}_j^2\|_2 \leq 4\rho_{\mathbf{y}\mathbf{y}}^2.
\end{equation*}

Define 
\begin{equation*}
    \begin{split}
        & \widehat{\mathbf{H}} = \frac{1}{|\mathcal{S}|} \sum_{j \in \mathcal{S}} \nabla_{\mathbf{y}\mathbf{y}}f_j(\mathbf{x},\mathbf{y})=\frac{1}{|\mathcal{S}|} \sum_{j \in \mathcal{S}} \mathbf{H}_j(\mathbf{x},\mathbf{y}),\\
        & \widehat{\mathbf{X}} = \sum_{j \in \mathcal{S}}\mathbf{X}_j = |\mathcal{S}|\cdot (\widehat{\mathbf{H}}-\mathbf{H}),
    \end{split}
\end{equation*}
where $\mathbf{H}=\nabla_{\mathbf{y}\mathbf{y}}f(\mathbf{x},\mathbf{y})$. By Matrix Bernstein Inequality \cite{gross2011recovering, tropp2012user, kohler2017sub}, we have
\begin{equation*}
\mathbf{Pr}\left( \|\widehat{\mathbf{X}}\|_2 \geq \epsilon \right) \leq 2d_2 \cdot \exp \left (\frac{-\epsilon^2}{16 |\mathcal{S}|\rho_{\mathbf{y}\mathbf{y}}^2}\right ),
\end{equation*}and thus
\begin{equation*}
\mathbf{Pr}\left(\|\widehat{\mathbf{H}}-\mathbf{H}\|_2 \geq \epsilon \right ) \leq 2d_2  \cdot \exp \left (\frac{-\epsilon^2 \cdot |\mathcal{S}|}{16 \rho_{\mathbf{y}\mathbf{y}}^2}\right ),
\end{equation*}which completes the proof.
\end{proof}

\begin{lemma}
[Matrix Hoeffding's inequality with uniform sampling for Rectangular Hessian matrix] 
\label{rectangular_matrix_hoeffding}
Let $0<\epsilon \leq \rho_{\mathbf{x}\mathbf{y}}$, $0<\delta <1$ and 
\begin{equation*}
|\mathcal{S}| \geq \frac{16 \rho_{\mathbf{x}\mathbf{y}}^2}{\epsilon^2} \log \frac{d_1 + d_2}{\delta}.
\end{equation*}where $\mathcal{S}$ is the set of index. Suppose we pick elements of $\mathcal{S}$ uniformly at random with or without replacement, then we have 
\begin{equation*}
\mathbf{Pr}\left( \left \|\nabla_{\mathbf{x}\mathbf{y}}\hat{f}(\mathbf{x},\mathbf{y}) - \nabla_{\mathbf{x}\mathbf{y}}f(\mathbf{x},\mathbf{y}) \right\|_2 \leq \epsilon \right) \geq 1-\delta
\end{equation*}and
\begin{equation*}
\mathbf{Pr}\left( \left \|\nabla_{\mathbf{y}\mathbf{x}}\hat{f}(\mathbf{x},\mathbf{y}) - \nabla_{\mathbf{y}\mathbf{x}}f(\mathbf{x},\mathbf{y}) \right\|_2 \leq \epsilon \right) \geq 1-\delta.
\end{equation*}
\end{lemma}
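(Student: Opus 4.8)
The plan is to reduce the rectangular case to the Hermitian matrix concentration already established in \cref{hermitian_matrix_hoeffding} by means of the self-adjoint (Hermitian) dilation, so that the Matrix Bernstein Inequality applies almost verbatim. Mirroring the previous lemma, I would first center the per-sample cross Hessians by setting $\mathbf{B}_j = \nabla_{\mathbf{x}\mathbf{y}}f_j(\mathbf{x},\mathbf{y}) - \nabla_{\mathbf{x}\mathbf{y}}f(\mathbf{x},\mathbf{y})$ for $j \in \mathcal{S}$, which are independent, zero-mean $d_1 \times d_2$ matrices under uniform sampling. Writing $\mathbf{B}_j = \frac{n-1}{n}\nabla_{\mathbf{x}\mathbf{y}}f_j - \frac{1}{n}\sum_{i \neq j}\nabla_{\mathbf{x}\mathbf{y}}f_i$ and invoking \cref{standard_assumption} together with the triangle inequality yields $\|\mathbf{B}_j\|_2 \leq 2\rho_{\mathbf{x}\mathbf{y}}$, exactly as in the Hermitian case.

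The key new ingredient is the dilation. For each $j$ I would form the symmetric matrix
\begin{equation*}
\mathbf{X}_j = \left[\begin{matrix} \mathbf{0} & \mathbf{B}_j \\ \mathbf{B}_j^{\top} & \mathbf{0}\end{matrix}\right] \in \mathbb{R}^{(d_1+d_2)\times(d_1+d_2)},
\end{equation*}
which is Hermitian, has $\mathbf{E}(\mathbf{X}_j) = \mathbf{0}$, and satisfies the norm identity $\|\mathbf{X}_j\|_2 = \|\mathbf{B}_j\|_2 \leq 2\rho_{\mathbf{x}\mathbf{y}}$, hence $\|\mathbf{X}_j^2\|_2 \leq 4\rho_{\mathbf{x}\mathbf{y}}^2$. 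Because the dilation is linear, the sum $\widehat{\mathbf{X}} = \sum_{j \in \mathcal{S}}\mathbf{X}_j$ is itself the dilation of $|\mathcal{S}|(\widehat{\mathbf{B}} - \mathbf{B})$, where $\widehat{\mathbf{B}} = \frac{1}{|\mathcal{S}|}\sum_{j\in\mathcal{S}}\nabla_{\mathbf{x}\mathbf{y}}f_j$ and $\mathbf{B} = \nabla_{\mathbf{x}\mathbf{y}}f$; in particular $\|\widehat{\mathbf{X}}\|_2 = |\mathcal{S}|\,\|\widehat{\mathbf{B}} - \mathbf{B}\|_2$.

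With these reductions in place, I would apply the Matrix Bernstein Inequality to the Hermitian sum $\widehat{\mathbf{X}}$ of ambient dimension $d_1 + d_2$, obtaining
\begin{equation*}
\mathbf{Pr}\left(\|\widehat{\mathbf{X}}\|_2 \geq \epsilon\right) \leq (d_1+d_2)\cdot \exp\left(\frac{-\epsilon^2}{16|\mathcal{S}|\rho_{\mathbf{x}\mathbf{y}}^2}\right),
\end{equation*}
where the prefactor is $d_1 + d_2$ (rather than $2(d_1+d_2)$) because the spectrum of a dilation is symmetric about the origin, so a single one-sided eigenvalue bound already controls the spectral norm. Translating back through $\|\widehat{\mathbf{B}} - \mathbf{B}\|_2 = \|\widehat{\mathbf{X}}\|_2/|\mathcal{S}|$ and forcing the right-hand side below $\delta$ gives precisely $|\mathcal{S}| \geq \frac{16\rho_{\mathbf{x}\mathbf{y}}^2}{\epsilon^2}\log\frac{d_1+d_2}{\delta}$, the stated sample-size threshold. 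Finally, the $\nabla_{\mathbf{y}\mathbf{x}}$ statement follows at no extra cost from $\nabla_{\mathbf{y}\mathbf{x}}f = (\nabla_{\mathbf{x}\mathbf{y}}f)^{\top}$ (and likewise for $\hat{f}$), since transposition preserves the spectral norm.

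I expect the only genuinely delicate point to be the bookkeeping of the dimension factor: verifying that the dilation produces $d_1 + d_2$ inside the logarithm, and confirming that the symmetric spectrum legitimately removes the extra factor of two present in \cref{hermitian_matrix_hoeffding}. The remaining steps — the norm bound on the summands and the conversion from the Bernstein tail to the sample-complexity condition — are routine repetitions of the Hermitian argument. The hypothesis $\epsilon \leq \rho_{\mathbf{x}\mathbf{y}}$ should enter only to keep the tail in the sub-Gaussian (Hoeffding) regime where the variance term dominates.
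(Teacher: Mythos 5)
Your proposal is correct and matches the paper's proof in substance: the paper disposes of this lemma in a single line by citing the Bernstein inequality for rectangular matrices, and your Hermitian-dilation argument is precisely the standard derivation of that rectangular inequality from the Hermitian case, with the same centering, the same bound $\|\mathbf{B}_j\|_2 \leq 2\rho_{\mathbf{x}\mathbf{y}}$, and the same tail-to-sample-size arithmetic as in \cref{hermitian_matrix_hoeffding}. Your bookkeeping of the dimension factor $d_1+d_2$ (the symmetric spectrum of the dilation turns the two-sided norm bound into a one-sided eigenvalue bound, removing the factor of two present in the Hermitian lemma) and the transpose argument for the $\nabla_{\mathbf{y}\mathbf{x}}$ statement are both sound, so you have in effect supplied the details the paper delegates to the citation.
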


\begin{proof}
Similar proof can be achieved as \cref{hermitian_matrix_hoeffding} by using Bernstein inequality for Rectangular matrices \cite{tropp2012user}.
\end{proof}

\begin{lemma}
[Vector Hoeffding's inequality with uniform sampling]
\label{vector_matrix_hoeffding}
Let $0< \epsilon, \delta < 1$ and 
\begin{equation*}
|\mathcal{S}| \geq \frac{32\rho_{\mathbf{x}}^2}{\epsilon^2} \cdot \left (\frac{1}{4}-\log \delta \right).
\end{equation*}Suppose we pick elements of $\mathcal{S}$ uniformly at random with or without replacement, then we have 
\begin{equation*}
\mathbf{Pr}\left( \left \|\nabla_{\mathbf{x}}\hat{f}(\mathbf{x},\mathbf{y}) - \nabla_{\mathbf{x}}f(\mathbf{x},\mathbf{y}) \right\|_2 \leq \epsilon \right) \geq 1-\delta.
\end{equation*}Similarly, if
\begin{equation*}
|\mathcal{S}| \geq \frac{32\rho_{\mathbf{y}}^2}{\epsilon^2} \cdot \left (\frac{1}{4}-\log \delta \right),
\end{equation*}then
\begin{equation*}
\mathbf{Pr}\left( \left \|\nabla_{\mathbf{y}}\hat{f}(\mathbf{x},\mathbf{y}) - \nabla_{\mathbf{y}}f(\mathbf{x},\mathbf{y}) \right\|_2 \leq \epsilon \right) \geq 1-\delta.
\end{equation*}
\end{lemma}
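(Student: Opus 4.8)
The plan is to mirror the argument used for \cref{hermitian_matrix_hoeffding}, replacing the matrix Bernstein inequality by its vector analogue. First I would recast the deviation as a normalized sum of independent, mean-zero random vectors. Writing $\mathbf{v}_j = \nabla_{\mathbf{x}} f_j(\mathbf{x},\mathbf{y})$ for the gradient associated with a uniformly drawn index $j$, we have $\mathbf{E}[\mathbf{v}_j] = \nabla_{\mathbf{x}} f(\mathbf{x},\mathbf{y})$ because $f = \frac{1}{n}\sum_i f_i$. Setting $\mathbf{X}_j = \mathbf{v}_j - \nabla_{\mathbf{x}} f(\mathbf{x},\mathbf{y})$, the quantity we must control is exactly $\nabla_{\mathbf{x}}\hat{f}(\mathbf{x},\mathbf{y}) - \nabla_{\mathbf{x}} f(\mathbf{x},\mathbf{y}) = \frac{1}{|\mathcal{S}|}\sum_{j \in \mathcal{S}} \mathbf{X}_j$.

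Next I would establish the uniform norm bound on the summands. Exactly as in the proof of \cref{hermitian_matrix_hoeffding}, writing $\mathbf{X}_j = \frac{n-1}{n}\nabla_{\mathbf{x}} f_j - \frac{1}{n}\sum_{i \neq j}\nabla_{\mathbf{x}} f_i$ and invoking $\|\nabla_{\mathbf{x}} f_i\|_2 \leq \rho_{\mathbf{x}}$ from \cref{standard_assumption}, the triangle inequality gives $\|\mathbf{X}_j\|_2 \leq \frac{n-1}{n}\rho_{\mathbf{x}} + \frac{n-1}{n}\rho_{\mathbf{x}} \leq 2\rho_{\mathbf{x}}$. With this bound in hand, I would apply a vector Bernstein (Hoeffding) inequality of the form $\mathbf{Pr}(\|\frac{1}{m}\sum_{j} \mathbf{X}_j\|_2 \geq \epsilon) \leq \exp(\frac{1}{4} - \frac{m\epsilon^2}{8\mu^2})$, valid for independent mean-zero vectors with $\|\mathbf{X}_j\|_2 \leq \mu$, taking $m = |\mathcal{S}|$ and $\mu = 2\rho_{\mathbf{x}}$ so that $8\mu^2 = 32\rho_{\mathbf{x}}^2$. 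Requiring the right-hand side to be at most $\delta$ and solving $\frac{1}{4} - \frac{|\mathcal{S}|\epsilon^2}{32\rho_{\mathbf{x}}^2} \leq \log\delta$ for $|\mathcal{S}|$ yields precisely the stated sample-size threshold. The bound for $\nabla_{\mathbf{y}}$ follows verbatim with $\rho_{\mathbf{y}}$ in place of $\rho_{\mathbf{x}}$.

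The step I expect to be the main obstacle is justifying the concentration inequality in the sampling-\emph{without}-replacement regime, where the summands are no longer independent. The clean route is to observe that, for convex (here exponential) tail functionals, sampling without replacement is dominated by sampling with replacement in the sense of Hoeffding's reduction theorem, so the same exponential bound applies; alternatively one can run a vector Azuma/martingale argument on the Doob sequence of the partial sums. A secondary point requiring care is citing a vector concentration inequality whose constants match the claimed $\frac{1}{4} - \log\delta$ factor, which is exactly the form delivered by the Pinelis-type vector Bernstein inequality used in \cite{kohler2017sub}; any other normalization would shift the constant $32$ and the additive $\frac{1}{4}$.
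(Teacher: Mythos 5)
Your proposal takes essentially the same route as the paper: the paper proves this lemma precisely by repeating the argument of \cref{hermitian_matrix_hoeffding} with the vector Bernstein inequality of \cite{gross2011recovering, kohler2017sub}, whose tail bound $\exp\left(\frac{1}{4} - \frac{|\mathcal{S}|\epsilon^2}{8\mu^2}\right)$ with $\mu = 2\rho_{\mathbf{x}}$ (resp.\ $2\rho_{\mathbf{y}}$) gives exactly the stated sample-size threshold. Your uniform bound $\|\mathbf{X}_j\|_2 \leq 2\rho_{\mathbf{x}}$, the constant bookkeeping, and the remark on handling sampling without replacement are all consistent with, and indeed more detailed than, the paper's one-line proof.
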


\begin{proof}
Similar proof can be achieved as \cref{hermitian_matrix_hoeffding} by using Bernstein inequality for vectors \cite{gross2011recovering, kohler2017sub}.
\end{proof}

\begin{theorem}
[Local convergence of stochastic HessianFR] 
\label{converg_stochastic_HessianFR}
Let $0 < \epsilon \leq \rho_{\mathbf{x}\mathbf{y}}$, $0< \delta < 1$ and $T>0$ to be a positive integer denoting the number of time steps in \cref{alg:stochastic_HessianFR}. We further let
\begin{equation}
\label{sampling_numbers}
    \begin{array}{ll}
      |\mathcal{S}_t| \geq \max \{ & \frac{16\rho_{\mathbf{y}\mathbf{y}}^2}{\epsilon^2} \log \frac{8d_2T}{\delta}, \frac{16 \rho_{\mathbf{x}\mathbf{y}}^2}{\epsilon^2} \log \frac{4(d_1 + d_2)T}{\delta},  \\
    & \frac{32\rho_{\mathbf{x}}^2}{\epsilon^2} \cdot \left (\frac{1}{4}+\log4T-\log \delta \right),\\ 
     &\frac{32\rho_{\mathbf{y}}^2}{\epsilon^2} \cdot \left (\frac{1}{4}+\log4T-\log \delta \right)  \}
    \end{array}
\end{equation}and suppose that we pick elements of $\mathcal{S}_t$ uniformly at random with or without replacement in each step for $t=0, \cdots T$. There exists a small enough $\Tilde{r} \leq r$ and a constant $C$ (depends only on constants defined in  \cref{standard_assumption} but independent of $T$) such that if $\|\mathbf{z}_0 - \mathbf{z}^{*} \|_2 \leq \Tilde{r}$, then with probability at least $1-\delta$, we have
\begin{equation}
    \|\mathbf{z}_T - \mathbf{z}^{*} \|_2 \leq \sigma^T \cdot \frac{w_2}{w_1} \cdot \|\mathbf{z}_0 - \mathbf{z}^{*} \|_2 + C \cdot \frac{1-\sigma^T}{1-\sigma} \cdot \epsilon,
\end{equation}for some $\sigma<1$ and $w_1, w_2 >0$ (independent of $\epsilon$, $\delta$ and $T$), if $\epsilon$ is small enough.
\end{theorem}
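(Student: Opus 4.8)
The plan is to treat stochastic HessianFR as a perturbation of the deterministic map whose contraction at $\mathbf{z}^{*}$ was already established in \cref{converg}. Write the deterministic update as $\mathbf{z}_{t+1} = g(\mathbf{z}_t)$ and the stochastic one as $\mathbf{z}_{t+1} = \hat{g}_t(\mathbf{z}_t)$, where $\hat{g}_t$ replaces the true gradients and Hessians by the minibatch quantities $\nabla\hat{f}$, $\widehat{\mathbf{H}}_{\mathbf{y}\mathbf{y}}$, $\widehat{\mathbf{H}}_{\mathbf{y}\mathbf{x}}$. Since $g(\mathbf{z}^{*})=\mathbf{z}^{*}$, I would decompose
\[
\mathbf{z}_{t+1}-\mathbf{z}^{*} = \bigl(g(\mathbf{z}_t)-g(\mathbf{z}^{*})\bigr) + \bigl(\hat{g}_t(\mathbf{z}_t)-g(\mathbf{z}_t)\bigr),
\]
the first piece being the deterministic contraction and the second the sampling perturbation. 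Working in the norm $\|\cdot\|$ from \eqref{jacobian_norm} for which $\|\mathbf{J}^{*}_{\text{HFR}}\|=1-\tilde{\lambda}$, the first piece obeys $\|g(\mathbf{z}_t)-g(\mathbf{z}^{*})\| \le \sigma\,\|\mathbf{z}_t-\mathbf{z}^{*}\|$ with $\sigma := 1-\tilde{\lambda}/2<1$ on a small enough ball, exactly by the Taylor argument already carried out in \cref{converg}.

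The second piece is controlled on a high-probability event. Applying \cref{hermitian_matrix_hoeffding}, \cref{rectangular_matrix_hoeffding} and \cref{vector_matrix_hoeffding} with confidence parameter $\delta/(4T)$ in place of $\delta$ reproduces exactly the four sample-size requirements in \eqref{sampling_numbers}; a union bound over the four estimated quantities and the $T$ steps then guarantees, with probability at least $1-\delta$, that at every step
\[
\|\nabla\hat{f}(\mathbf{z}_t)-\nabla f(\mathbf{z}_t)\|_2 \le \epsilon,\qquad \|\widehat{\mathbf{H}}_{\mathbf{y}\mathbf{y}}-\mathbf{H}_{\mathbf{y}\mathbf{y}}\|_2 \le \epsilon,\qquad \|\widehat{\mathbf{H}}_{\mathbf{y}\mathbf{x}}-\mathbf{H}_{\mathbf{y}\mathbf{x}}\|_2 \le \epsilon.
\]
A point requiring care is that these lemmas are pointwise while $\mathbf{z}_t$ is itself random; I would resolve this by a filtration argument, noting that conditionally on $\mathcal{F}_{t-1}$ the minibatch $\mathcal{S}_t$ is independent of $\mathbf{z}_t$, so the concentration bound applies at the conditioned value $\mathbf{z}_t$ provided $\mathbf{z}_t\in\Omega(\mathbf{z}^{*})$, where \cref{standard_assumption} supplies the gradient and Hessian bounds.

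On this event I would bound the sampling perturbation by $C'\epsilon$ with $C'$ depending only on the constants in \cref{standard_assumption}. The gradient and $\widehat{\mathbf{H}}_{\mathbf{y}\mathbf{x}}$ terms follow directly from boundedness, while the Hessian-inverse term needs the resolvent identity $\widehat{\mathbf{H}}_{\mathbf{y}\mathbf{y}}^{-1}-\mathbf{H}_{\mathbf{y}\mathbf{y}}^{-1} = \widehat{\mathbf{H}}_{\mathbf{y}\mathbf{y}}^{-1}(\mathbf{H}_{\mathbf{y}\mathbf{y}}-\widehat{\mathbf{H}}_{\mathbf{y}\mathbf{y}})\mathbf{H}_{\mathbf{y}\mathbf{y}}^{-1}$ together with a uniform lower bound on $|\lambda_{\text{min}}(-\mathbf{H}_{\mathbf{y}\mathbf{y}})|$ near $\mathbf{z}^{*}$ (guaranteed by $\mathbf{H}_{\mathbf{y}\mathbf{y}}^{*}\prec\mathbf{0}$ from \cref{sufficient_local_minimax}); for $\epsilon$ small this also certifies that $\widehat{\mathbf{H}}_{\mathbf{y}\mathbf{y}}$ is invertible with uniformly bounded inverse. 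Combining the two pieces and using the norm equivalence $w_1\|\cdot\|_2 \le \|\cdot\| \le w_2\|\cdot\|_2$ yields the one-step recursion $\|\mathbf{z}_{t+1}-\mathbf{z}^{*}\| \le \sigma\|\mathbf{z}_t-\mathbf{z}^{*}\| + w_2 C'\epsilon$; unrolling the geometric series and dividing by $w_1$ gives $\|\mathbf{z}_T-\mathbf{z}^{*}\|_2 \le \sigma^T\tfrac{w_2}{w_1}\|\mathbf{z}_0-\mathbf{z}^{*}\|_2 + \tfrac{w_2 C'}{w_1}\cdot\tfrac{1-\sigma^T}{1-\sigma}\epsilon$, which is the claimed bound with $C := w_2 C'/w_1$.

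The main obstacle I expect is the coupling between \emph{staying in the neighborhood} and \emph{the estimates being accurate}: the concentration bounds are only valid while $\mathbf{z}_t\in\Omega(\mathbf{z}^{*})$, yet keeping $\mathbf{z}_t\in\Omega(\mathbf{z}^{*})$ relies on the recursion, which in turn needs the estimates to be accurate. I would close this loop by an induction invariant, choosing $\tilde{r}\le r$ and $\epsilon$ small enough that the recursion's steady state $w_2 C'\epsilon/(1-\sigma)$ together with the initial radius stays strictly inside the ball of radius $r$, so that on the good event the iterates never leave $\Omega(\mathbf{z}^{*})$ and all per-step estimates remain simultaneously valid. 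The secondary technical care is the filtration bookkeeping in the union bound, ensuring the $1-\delta$ guarantee is genuinely uniform over the data-dependent trajectory rather than for a single fixed point.
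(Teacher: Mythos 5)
Your proposal follows essentially the same route as the paper's proof: the identical decomposition of the stochastic step into the deterministic contraction plus a sampling error term, the same application of \cref{hermitian_matrix_hoeffding}, \cref{rectangular_matrix_hoeffding} and \cref{vector_matrix_hoeffding} with confidence $\delta/(4T)$ and a union bound (which is exactly how \cref{sampling_numbers} arises), the same resolvent-identity bound on $\widehat{\mathbf{H}}_{\mathbf{y}\mathbf{y}}^{-1}-\mathbf{H}_{\mathbf{y}\mathbf{y}}^{-1}$ via the local lower bound $\mu_1\mathbf{I}\preceq-\mathbf{H}_{\mathbf{y}\mathbf{y}}$ with $\epsilon\leq\mu_1/2$, the same induction keeping $\mathbf{z}_t\in\Omega(\mathbf{z}^{*})$ by taking $\tilde{r}$ and $\epsilon$ small, and the same geometric-series unrolling with the norm equivalence giving $C = w_2 C'/w_1$. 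If anything, you are slightly more careful than the paper on two points it leaves implicit: the filtration argument justifying the pointwise concentration lemmas at the data-dependent iterate $\mathbf{z}_t$, and the treatment of the Taylor remainder (the paper's expansion in \cref{taylor_HFR} is written as exactly linear, whereas you absorb the remainder into $\sigma$ as in \cref{converg}).
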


\begin{proof}
By the smoothness of the object function (twice continuously differentiable), there exists a region (a neighborhood of the strict local minimax point $\mathbf{z}^{*}$) $\mathcal{B}_2(\mathbf{z}^{*}, \Tilde{r}) = \{ \mathbf{z}:\|\mathbf{z}-\mathbf{z}^{*} \|_2 \leq \Tilde{r}\}$, such that the Taylor expansion of the algorithm in the region satisfies
\begin{equation}
\label{taylor_HFR}
    g(\mathbf{z}) = g(\mathbf{z}^{*}) + \mathbf{J}_{\text{HFR}}(\mathbf{z} - \mathbf{z}^{*})
\end{equation}and
\begin{equation*}
\|\mathbf{J}_{\text{HFR}}\| \leq \sigma < 1,
\end{equation*}where the norm $\|\cdot\|$ is defined in \cref{jacobian_norm},
and
the negative Hessian matrix of the object function w.r.t. $\mathbf{y}$ in the neighborhood of the strict local minimax $\mathbf{z}^{*}$ is positive definite, i.e.,
\begin{equation*}
\mu_1 \mathbf{I} \preceq -\mathbf{H}_{\mathbf{y}\mathbf{y}} \preceq \mu_2 \mathbf{I}.
\end{equation*}By the equivalence of norms in finite dimensions, there exist positive constants $w_1$ and $w_2$ such that
\begin{equation*}
w_1 \|\mathbf{z} \|_2 \leq \|\mathbf{z} \| \leq w_2 \|\mathbf{z} \|_2, \hspace{1em} \forall \mathbf{z}.
\end{equation*}

The updates in \cref{alg:stochastic_HessianFR} can be rewritten as
\begin{equation}
\label{update_stochatic}
         \left [ \begin{matrix}
            \mathbf{x}_{t+1} \\
            \mathbf{y}_{t+1}
        \end{matrix} \right ]  = 
        \left [ \begin{matrix}
            \mathbf{x}_{t} \\
            \mathbf{y}_{t}
        \end{matrix} \right ] - \eta_{\mathbf{x}} \left [ \begin{matrix}
        \mathbf{I} & \\
        -\mathbf{H}^{-1}_{\mathbf{y}\mathbf{y}}\mathbf{H}_{\mathbf{y}\mathbf{x}}& c_1 \mathbf{H}^{-1}_{\mathbf{y}\mathbf{y}}-c_2 \mathbf{I}
        \end{matrix} \right] \cdot \left [ 
        \begin{matrix}
         \nabla_{\mathbf{x}} f(\mathbf{x}_t,\mathbf{y}_t)\\
          \nabla_{\mathbf{y}} f(\mathbf{x}_t,\mathbf{y}_t)
        \end{matrix}
        \right] 
         + \mathcal{E}_t,
\end{equation}where
\begin{equation*}
    \mathbf{\mathcal{E}}_t = \eta_{\mathbf{x}} \cdot \left[ \begin{matrix}
     \mathcal{E}_{tx}\\
     \mathcal{E}_{ty1}+\mathcal{E}_{ty2}+\mathcal{E}_{ty3}
    \end{matrix} \right ]
\end{equation*} 
with

\begin{equation*}
    \mathcal{E}_{tx}=\nabla_{\mathbf{x}} f(\mathbf{x}_t,\mathbf{y}_t)-\nabla_{\mathbf{x}} \hat{f}(\mathbf{x}_t,\mathbf{y}_t),
\end{equation*}

\begin{equation*}
    \mathcal{E}_{ty1}=\widehat{\mathbf{H}}^{-1}_{\mathbf{y}\mathbf{y}}\widehat{\mathbf{H}}_{\mathbf{y}\mathbf{x}}\nabla_{\mathbf{x}} \hat{f}(\mathbf{x}_t,\mathbf{y}_t)-\mathbf{H}^{-1}_{\mathbf{y}\mathbf{y}}\mathbf{H}_{\mathbf{y}\mathbf{x}}\nabla_{\mathbf{x}} f(\mathbf{x}_t,\mathbf{y}_t),
\end{equation*}

\begin{equation*}
    \mathcal{E}_{ty2}=c_1\mathbf{H}^{-1}_{\mathbf{y}\mathbf{y}}\nabla_{\mathbf{y}}f(\mathbf{x}_t,\mathbf{y}_t)-c_1\widehat{\mathbf{H}}^{-1}_{\mathbf{y}\mathbf{y}}\nabla_{\mathbf{y}}\hat{f}(\mathbf{x}_t,\mathbf{y}_t),
\end{equation*}
and
\begin{equation*}
    \mathcal{E}_{ty3}=c_2\nabla_{\mathbf{y}}\hat{f}(\mathbf{x}_t,\mathbf{y}_t) - c_2 \nabla_{\mathbf{y}}f(\mathbf{x}_t,\mathbf{y}_t).
\end{equation*}

If $\mathcal{S}_t$ satisfies \cref{sampling_numbers}, then by \cref{hermitian_matrix_hoeffding}, \cref{rectangular_matrix_hoeffding} and \cref{vector_matrix_hoeffding} that with probability at least $1-\delta$, the following inequalities hold simultaneously:
\begin{equation}
\begin{split}
    & \left \|\nabla_{\mathbf{y}\mathbf{y}}\hat{f}(\mathbf{x}_t,\mathbf{y}_t) - \nabla_{\mathbf{y}\mathbf{y}}f(\mathbf{x}_t,\mathbf{y}_t) \right\|_2 \leq \epsilon,\\
    & \left \|\nabla_{\mathbf{x}\mathbf{y}}\hat{f}(\mathbf{x}_t,\mathbf{y}_t) - \nabla_{\mathbf{x}\mathbf{y}}f(\mathbf{x}_t,\mathbf{y}_t) \right\|_2 \leq \epsilon,\\
    & \left \|\nabla_{\mathbf{x}}\hat{f}(\mathbf{x}_t,\mathbf{y}_t) - \nabla_{\mathbf{x}}f(\mathbf{x}_t,\mathbf{y}_t) \right\|_2 \leq \epsilon,\\
    & \left \|\nabla_{\mathbf{y}}\hat{f}(\mathbf{x}_t,\mathbf{y}_t) - \nabla_{\mathbf{y}}f(\mathbf{x}_t,\mathbf{y}_t) \right\|_2 \leq \epsilon.
\end{split}
\end{equation}for all $t=0, \cdots, T$. Suppose $\epsilon \leq \mu_1/2 < \mu_1$ is small enough such that 
\begin{equation*}
(\mu_1-\epsilon) \mathbf{I} \preceq -\widehat{\mathbf{H}}_{\mathbf{y}\mathbf{y}},
\end{equation*}and thus
\begin{equation*}
\| \widehat{\mathbf{H}}_{\mathbf{y}\mathbf{y}}^{-1}\|_2 \leq (\mu_1 - \epsilon)^{-1}.
\end{equation*}Furthermore, 
\begin{equation*}
    \begin{split}
        \|\mathbf{H}_{\mathbf{y}\mathbf{y}}^{-1} - \widehat{\mathbf{H}}_{\mathbf{y}\mathbf{y}}^{-1}\|_2 & = \|\widehat{\mathbf{H}}_{\mathbf{y}\mathbf{y}}^{-1}(\widehat{\mathbf{H}}_{\mathbf{y}\mathbf{y}} - \mathbf{H}_{\mathbf{y}\mathbf{y}})\mathbf{H}_{\mathbf{y}\mathbf{y}}^{-1}\|_2 \\
        & \leq \|\widehat{\mathbf{H}}_{\mathbf{y}\mathbf{y}}^{-1}\|_2 \cdot \|\widehat{\mathbf{H}}_{\mathbf{y}\mathbf{y}} - \mathbf{H}_{\mathbf{y}\mathbf{y}}\|_2 \cdot \|\mathbf{H}_{\mathbf{y}\mathbf{y}}^{-1}\|_2 \\
        & \leq (\mu_1 - \epsilon)^{-1} \cdot \mu_1^{-1} \cdot \epsilon.
    \end{split}
\end{equation*}In the next step, we can bound the error term $\mathcal{E}_t$ w.r.t. $\epsilon$. For $\mathcal{E}_{tx}$,
\begin{equation}
\label{Error_x}
    \|\mathcal{E}_{tx}\|_2 = \|\nabla_{\mathbf{x}}f(\mathbf{x}_t,\mathbf{y}_t) - \nabla_{\mathbf{x}}\hat{f}(\mathbf{x}_t,\mathbf{y}_t) \|_2 \leq \epsilon.
\end{equation}For $\mathcal{E}_{ty1}$,
\begin{equation*}
    \begin{split}
        \mathcal{E}_{ty1} & = \widehat{\mathbf{H}}^{-1}_{\mathbf{y}\mathbf{y}}\widehat{\mathbf{H}}_{\mathbf{y}\mathbf{x}}\nabla_{\mathbf{x}} \hat{f}(\mathbf{x}_t,\mathbf{y}_t)-\mathbf{H}^{-1}_{\mathbf{y}\mathbf{y}}\mathbf{H}_{\mathbf{y}\mathbf{x}}\nabla_{\mathbf{x}} f(\mathbf{x}_t,\mathbf{y}_t)\\
        & = \left ( \widehat{\mathbf{H}}^{-1}_{\mathbf{y}\mathbf{y}}\widehat{\mathbf{H}}_{\mathbf{y}\mathbf{x}} - \mathbf{H}^{-1}_{\mathbf{y}\mathbf{y}}\widehat{\mathbf{H}}_{\mathbf{y}\mathbf{x}} + \mathbf{H}^{-1}_{\mathbf{y}\mathbf{y}}\widehat{\mathbf{H}}_{\mathbf{y}\mathbf{x}} - \mathbf{H}^{-1}_{\mathbf{y}\mathbf{y}}\mathbf{H}_{\mathbf{y}\mathbf{x}} \right)\cdot \nabla_{\mathbf{x}} \hat{f}(\mathbf{x}_t,\mathbf{y}_t)\\ & \hspace{1em}+\mathbf{H}^{-1}_{\mathbf{y}\mathbf{y}}\mathbf{H}_{\mathbf{y}\mathbf{x}}\left(\nabla_{\mathbf{x}} \hat{f}(\mathbf{x}_t,\mathbf{y}_t) - \nabla_{\mathbf{x}} f(\mathbf{x}_t,\mathbf{y}_t)\right),\\
    \end{split}
\end{equation*}and therefore,
\begin{equation}
\label{Error_y1}
    \begin{split}
     \|\mathcal{E}_{ty1}\|_2 & \leq \left(\|\mathbf{H}_{\mathbf{y}\mathbf{y}}^{-1} - \widehat{\mathbf{H}}_{\mathbf{y}\mathbf{y}}^{-1}\|_2 \cdot \|\widehat{\mathbf{H}}_{\mathbf{y}\mathbf{x}}\|_2 + \|\mathbf{H}_{\mathbf{y}\mathbf{y}}^{-1}\|_2 \cdot \|\widehat{\mathbf{H}}_{\mathbf{y}\mathbf{x}}-\mathbf{H}_{\mathbf{y}\mathbf{x}}\|_2 \right) \cdot \|\nabla_{\mathbf{x}} \hat{f}(\mathbf{x}_t,\mathbf{y}_t)\|\\
        & \hspace{1em} + \|\mathbf{H}^{-1}_{\mathbf{y}\mathbf{y}}\|_2 \cdot  \|\mathbf{H}_{\mathbf{y}\mathbf{x}}\|_2 \cdot \|\nabla_{\mathbf{x}} \hat{f}(\mathbf{x}_t,\mathbf{y}_t) - \nabla_{\mathbf{x}} f(\mathbf{x}_t,\mathbf{y}_t)\|_2\\
        & \leq \left((\mu_1 - \epsilon)^{-1} \cdot \mu_1^{-1} \cdot \rho_{\mathbf{x}\mathbf{y}}\cdot \rho_{\mathbf{x}}+\mu_1^{-1} \cdot \rho_{\mathbf{x}}+\mu_1^{-1} \cdot \rho_{\mathbf{x}\mathbf{y}}\right) \cdot \epsilon.
    \end{split}
\end{equation}

For $\mathcal{E}_{ty2}$,
\begin{equation*}
    \begin{split}
        \mathcal{E}_{ty2} & = c_1\mathbf{H}^{-1}_{\mathbf{y}\mathbf{y}}\nabla_{\mathbf{y}}f(\mathbf{x}_t,\mathbf{y}_t)-c_1\widehat{\mathbf{H}}^{-1}_{\mathbf{y}\mathbf{y}}\nabla_{\mathbf{y}}\hat{f}(\mathbf{x}_t,\mathbf{y}_t)\\
        & = c_1 \cdot \left(\mathbf{H}^{-1}_{\mathbf{y}\mathbf{y}}\nabla_{\mathbf{y}}f(\mathbf{x}_t,\mathbf{y}_t) - \mathbf{H}^{-1}_{\mathbf{y}\mathbf{y}}\nabla_{\mathbf{y}}\hat{f}(\mathbf{x}_t,\mathbf{y}_t)  +\mathbf{H}^{-1}_{\mathbf{y}\mathbf{y}}\nabla_{\mathbf{y}}\hat{f}(\mathbf{x}_t,\mathbf{y}_t) - \widehat{\mathbf{H}}^{-1}_{\mathbf{y}\mathbf{y}}\nabla_{\mathbf{y}}\hat{f}(\mathbf{x}_t,\mathbf{y}_t)\right)
    \end{split}
\end{equation*}and
\begin{equation}
\label{Error_y2}
    \begin{split}
        \|\mathcal{E}_{ty2}\|_2 & \leq c_1 \cdot \left( \|\mathbf{H}^{-1}_{\mathbf{y}\mathbf{y}}\|_2 \cdot \|\nabla_{\mathbf{y}}f(\mathbf{x}_t,\mathbf{y}_t) - \nabla_{\mathbf{y}}\hat{f}(\mathbf{x}_t,\mathbf{y}_t)\|_2  + \|\mathbf{H}_{\mathbf{y}\mathbf{y}}^{-1} - \widehat{\mathbf{H}}_{\mathbf{y}\mathbf{y}}^{-1}\|_2 \cdot \|\nabla_{\mathbf{y}}\hat{f}(\mathbf{x}_t,\mathbf{y}_t)\|_2 \right)\\
        & \leq c_1 \cdot \left(\mu_1^{-1} + (\mu_1 - \epsilon)^{-1} \cdot \mu_1^{-1}\cdot \rho_{\mathbf{y}}   \right) \cdot \epsilon.
    \end{split}
\end{equation}

For $\mathcal{E}_{ty3}$,
\begin{equation}
    \label{Error_y3}
    \|\mathcal{E}_{ty3}\|_2 = c_2 \cdot \|\nabla_{\mathbf{y}}f(\mathbf{x}_t,\mathbf{y}_t) - \nabla_{\mathbf{y}}\hat{f}(\mathbf{x}_t,\mathbf{y}_t)\|_2 \leq c_2 \cdot \epsilon.
\end{equation} Combining \cref{Error_x}, \cref{Error_y1}, \cref{Error_y2} and \cref{Error_y3}, we have
\begin{equation*}
\begin{split}
    \|\mathcal{E}_t\|_2 & \leq \eta_{\mathbf{x}} \cdot \left(\|\mathcal{E}_{tx}\|_2+\|\mathcal{E}_{ty1}\|_2+\|\mathcal{E}_{ty2}\|_2+\|\mathcal{E}_{ty3}\|_2\right)\\
    & \leq \eta_{\mathbf{x}} \cdot C_1 \cdot \epsilon,
\end{split}
\end{equation*}for a constant $C_1$ independent of $\epsilon$ (depends on constants defined in \cref{standard_assumption}). If $\epsilon$ is small enough such that
\begin{equation*}
\epsilon \leq \frac{ 1-\sigma}{\eta_{\mathbf{x}} \cdot C_1 \cdot w_2} \cdot r,
\end{equation*}then 
\begin{equation}
\label{bound_for_error}
    \|\mathcal{E}_t\| \leq w_2 \cdot \|\mathcal{E}_t\|_2 \leq w_2 \cdot \eta_{\mathbf{x}} \cdot C_1 \cdot \epsilon \leq (1-\sigma) \cdot r.
\end{equation}

Return back to \cref{update_stochatic}, with \cref{taylor_HFR} it can be rewritten as
\begin{equation}
    \mathbf{z}_{t+1} - \mathbf{z}^{*} = \mathbf{J}_{\text{HFR}} \cdot (\mathbf{z}_{t} - \mathbf{z}^{*}) + \mathcal{E}_t,
\end{equation}where $\mathbf{J}_{\text{HFR}}$ is the Jacobian matrix defined in \cref{taylor_HFR} and
\begin{equation*}
\|\mathbf{z}_{t+1} - \mathbf{z}^{*}\| \leq \sigma \cdot \|\mathbf{z}_{t} - \mathbf{z}^{*}\| + \|\mathcal{E}_t\|,
\end{equation*}if $\mathbf{z}_t \in \Omega(\mathbf{z}^{*})$.
Therefore, we can easily prove by induction that $\|\mathbf{z}_t - \mathbf{z}^{*}\|_2 \leq r$ for $t=0,\cdots, T$, if \cref{bound_for_error} holds and  $\|\mathbf{z}_0 - \mathbf{z}^{*}\| \leq w_1 r$. It implies that $\mathbf{z}_t \in \Omega(\mathbf{z}^{*})$ and
\begin{equation*}
\|\mathbf{z}_{t+1} - \mathbf{z}^{*}\| \leq \sigma \cdot \|\mathbf{z}_{t} - \mathbf{z}^{*}\| + \|\mathcal{E}_t\|,
\end{equation*}for $t=0,\cdots, T$. We just let $\Tilde{r} \leq w_1/w_2 \cdot r$, therefore, $\|\mathbf{z}_0 - \mathbf{z}^{*}\| \leq w_2 \cdot \|\mathbf{z}_0 - \mathbf{z}^{*}\|_2 \leq w_2 \cdot \Tilde{r} \leq w_1 r$. Based on above analysis, we have
\begin{equation*}
    \begin{split}
        \|\mathbf{z}_T - \mathbf{z}^{*}\| & \leq \sigma \cdot \|\mathbf{z}_{T-1} - \mathbf{z}^{*}\| + \|\mathcal{E}_{T-1}\| \\
        & \leq \cdots \\
        & \leq \sigma^T \cdot \|\mathbf{z}_{0} - \mathbf{z}^{*}\| + \sigma^{T-1} \cdot \|\mathcal{E}_{0}\| + \sigma^{T-2} \cdot \|\mathcal{E}_{1}\|  + \cdots + \sigma \cdot \|\mathcal{E}_{T-2}\| + \|\mathcal{E}_{T-1}\|
    \end{split}
\end{equation*}and
\begin{equation*}
    \begin{split}
        \|\mathbf{z}_T - \mathbf{z}^{*}\|_2 & \leq \sigma^T \cdot \frac{w_2}{w_1} \cdot \|\mathbf{z}_0 - \mathbf{z}^{*}\|_2 + \frac{1-\sigma^T}{1-\sigma} \cdot \frac{w_2}{w_1} \cdot \eta_{\mathbf{x}} \cdot C_1 \cdot \epsilon \\
        & =: \sigma^T \cdot \frac{w_2}{w_1} \cdot \|\mathbf{z}_0 - \mathbf{z}^{*}\|_2 + C \cdot \frac{1-\sigma^T}{1-\sigma} \cdot \epsilon,
    \end{split}
\end{equation*}where $C = \eta_{\mathbf{x}} \cdot C_1 \cdot w_2/w_1$ is a constant independent of $\epsilon$ and $\sigma$.

\end{proof}

\subsection{Computation of the Hessian Inverse}
\label{comput_hessian_inverse}
The proposed HessianFR algorithm requires the computation of Hessian inverse, as is shown in \cref{alg:HessianFR} and Algorithm 3.2. However, exact computation of Hessian inverse is computationally expensive and is infeasible in many applications. In numerical linear algebra, several methods are popular in approximating Hessian inverse, e.g., iterative methods and low rank approximation methods. We will discuss them as follows. 

\subsubsection{Conjugate Gradient (CG) Method}
Both Wang et al. \cite{Wang2020On} and Zhang et al. \cite{zhang2020newton} adopted conjugate gradient (CG) method in FR and GDN to approximate the Hessian inverse due to the mathematical guarantees. The convergence of CG for positive definite matrices are well studied \cite{saad2003iterative}. To guarantee the positive definiteness of the Hessian matrix, besides adding an $L_2$ regularization term in the loss function, we approximately solve the following linear system: 
\begin{equation}
    \mathbf{H}_{\mathbf{y y}}^2 \mathbf{b} = \mathbf{H}_{\mathbf{y y}}(c_2 \nabla_{\mathbf{y}}f(\mathbf{x}_t,\mathbf{y}_t)-\mathbf{H}_{\mathbf{y x}}\nabla_{\mathbf{x}}f(\mathbf{x}_t,\mathbf{y}_t)),
\end{equation}
where we regard $\mathbf{b}$ as an approximation to 
\begin{equation}
    c_2 \mathbf{H}_{\mathbf{y y}}^{-1}\nabla_{\mathbf{y}}f(\mathbf{x}_t,\mathbf{y}_t)-\mathbf{H}_{\mathbf{y y}}^{-1}\mathbf{H}_{\mathbf{y x}}\nabla_{\mathbf{x}}f(\mathbf{x}_t,\mathbf{y}_t).
\end{equation}
Moreover, instead of obtaining the full Hessian matrix, CG requires to calculate the Hessian vector product which is approximately of two times computation than back-propagation (calculation of gradients) in deep learning. 

In numerical parts, we also compare HessianFR with different numbers of CG updates for approximating Hessian inverse. We find that larger numbers of CG updates in each steps contribute to better approximation of Hessian inverse, but require much more computation.

\subsubsection{Diagonal Method (DG)} CG is computationally expensive compared with first order methods (e.g., GDA). We hope to develop a fast HessianFR algorithm that can solve large scale problems (in terms of both size of dataset and parameters) with comparable computational costs as GDA. Recall that the initialization for LBFGS \cite{liu1989limited} in minimization problem $\min_{\mathbf{y}}f(\mathbf{y})$ is as follows:
\begin{equation}
    \mathbf{H}_{0}^{-1} = \frac{<\nabla_{\mathbf{y}}f(\mathbf{y_t})-\nabla_{\mathbf{y}}f(\mathbf{y_{t-1}}),\mathbf{y}_t - \mathbf{y}_{t-1}>}{\|\nabla_{\mathbf{y}}f(\mathbf{y_t})-\nabla_{\mathbf{y}}f(\mathbf{y_{t-1}})\|_2^2} \cdot \mathbf{I},
\end{equation}which is computationally cheap. We directly adopt the same way to approximate the Hessian inverse in practice, i.e., the Hessian matrix w.r.t. $\mathbf{y}$ at $\mathbf{z}_t=(\mathbf{x}_t,\mathbf{y}_t)$ is approximated by
\begin{equation}
\label{diagonal_mathod}
    \mathbf{H}_{\mathbf{y y}}^{-1} \approx \frac{<\nabla_{\mathbf{y}}f(\mathbf{x}_t,\mathbf{y_t})-\nabla_{\mathbf{y}}f(\mathbf{x}_t, \mathbf{y_{t-1}}),\mathbf{y}_t - \mathbf{y}_{t-1}>}{\|\nabla_{\mathbf{y}}f(\mathbf{x}_t,\mathbf{y_t})-\nabla_{\mathbf{y}}f(\mathbf{x}_t,\mathbf{y_{t-1}})\|_2^2} \cdot \mathbf{I}.
\end{equation} Note that it is a very rough estimate for Hessian inverse $\mathbf{H}_{\mathbf{y y}}^{-1}$. Numerically, we find that it performs worse than CG (even with one time update), however, they are comparable in stochastic problems. It is reasonable that exact computations of Hessian inverse may not make sense because we introduce stochasticity and similar results are also found in \cite{yao2020adahessian, xu2020second} and \cite{schaul2013no}. Recently, Yao et al. \cite{yao2020adahessian} proposed a Hessian based gradient descent method which approximates Hessian by a diagonal matrix using Hutchinson’s method \cite{bekas2007estimator}. However, it is still computationally expensive because each steps of Hutchinson’s method requires several times of Hessian vector products and diagonal matrix may not approximate Hessian matrix well if the Hessian matrix is not diagonally dominated. Both Hutchinson’s method and CG requires the computation of Hessian vector product but Hutchinson’s method merely approximate the diagonal of the Hessian while CG approximates the whole matrix. Therefore, it does not make sense to use Hutchinson’s method to approximate diagonal of Hessian in the proposed algorithm. For fast computation, we adopt the rough estimate \cref{diagonal_mathod} for Hessian inverse.

\section{Experiments}
\label{numerical_experiments}
In this part, we conduct experiments on toy examples (deterministic optimization) and large scale datasets (stochastic optimization) to demonstrate the performance of the proposed HessianFR algorithm. Please refer to the supplementary material for experimental details. Visualized results are displayed in \cref{vis_cifar10_gen_data}, \cref{vis_mnist_gen_data} and \cref{vis_celeba_gen_data}.

\subsection{Toy Examples: Low Dimensional Examples}
To show the superiority of local minimax based algorithms, we first compare FR, GDN as well as the proposed HessianFR algorithm with gradient descent ascent (GDA), optimistic GDA (OGDA) \cite{daskalakis2018training}, extra gradient (EG) \cite{korpelevich1976extragradient}, symplectic gradient adjustment (SGA) \cite{balduzzi2018mechanics} and consensus optimization (CO) \cite{mescheder2017numerics} in solving low dimentional toy examples. 

Consider the min-max problem of the following three two-dimensional functions:
\begin{equation}
    g_{1}(x, y)=-3 x^{2}-y^{2}+4 x y,
\end{equation}
\begin{equation}
    g_{2}(x, y)=3 x^{2}+y^{2}+4 x y,
\end{equation} 
and 
\begin{equation}
    g_{3}(x, y)=\left(4 x^{2}-\left(y-3 x+0.05 x^{3}\right)^{2}-0.1 y^{4}\right) \cdot e^{-0.01\left(x^{2}+y^{2}\right)}.
\end{equation}

Trajectory of those algorithms are available in \cref{low_dim}. For $g_1$, GDA, OGDA and EG diverge from $(0,0)$ which is a local minimax point. $(0,0)$ is not a local minimax point of $g_2$, however, only HessianFR, FR and GDN avoid convergence to the undesired critical point. In $g_3$, only local minimax based algorithms (HessianFR, FR and GDN) converge to the local minimax point $(0,0)$ while all other algorithms suffer from cycling. Numerical results here suggest that local minimax based algorithms outperform GDA and its variants. Even in those simple cases, GDA and its variants (e.g., OGDA, EG, GDA and CO) either fail to converge or converge to undesirable points while local minimax based algorithms (e.g., HessianFR, FR and GDN) exhibit outstanding performances.

\begin{figure*}
  \centering
  \subfloat[GDA, OGDA and EG diverge.] {
     \label{appendix_low_dim:a}     
    \includegraphics[width=0.6\textwidth]{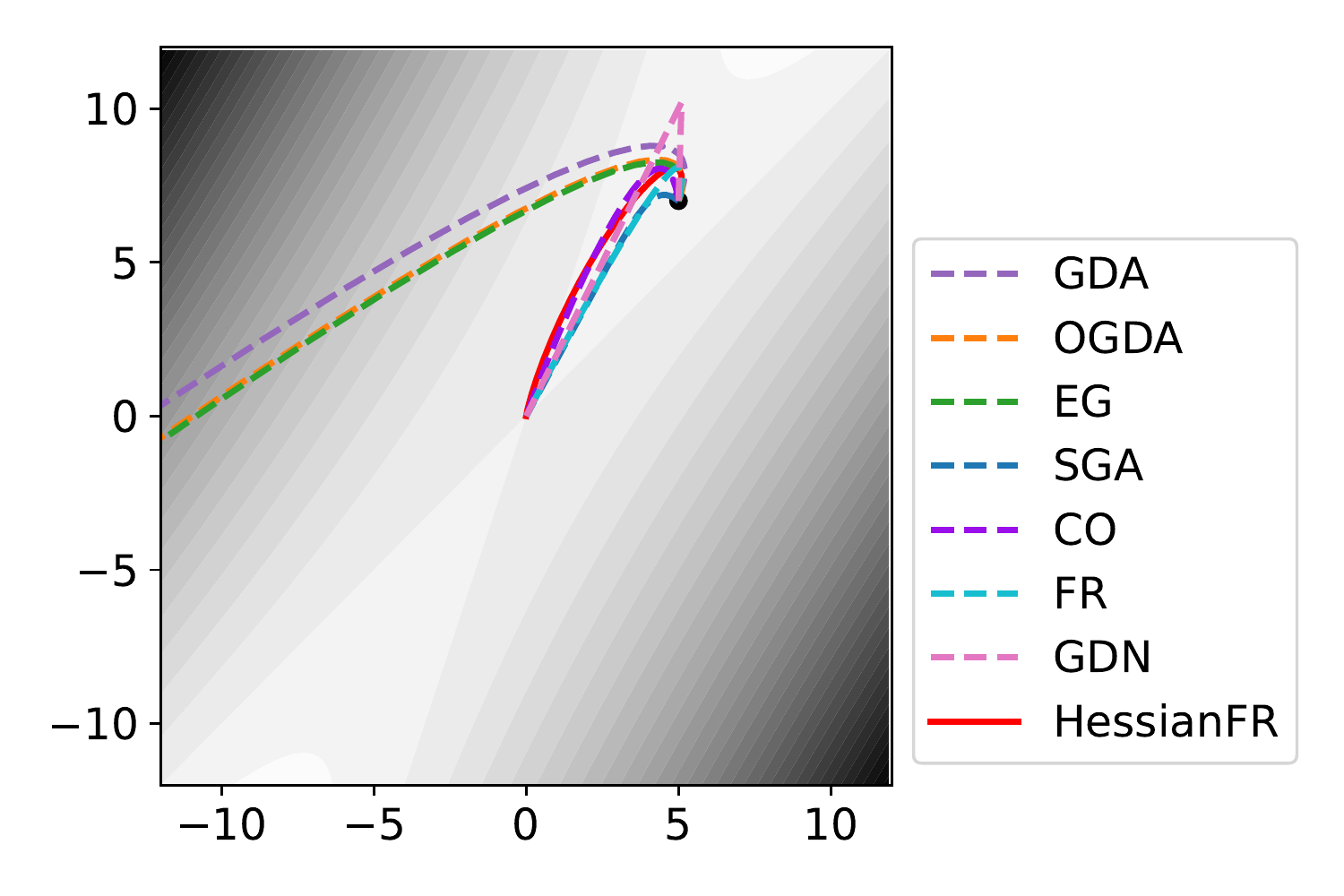}
    }\   
    \subfloat[Only HessianFR, FR and GDN escape from non-local minimax.] {
    \label{appendix_low_dim:b}     
    \includegraphics[width=0.6\textwidth]{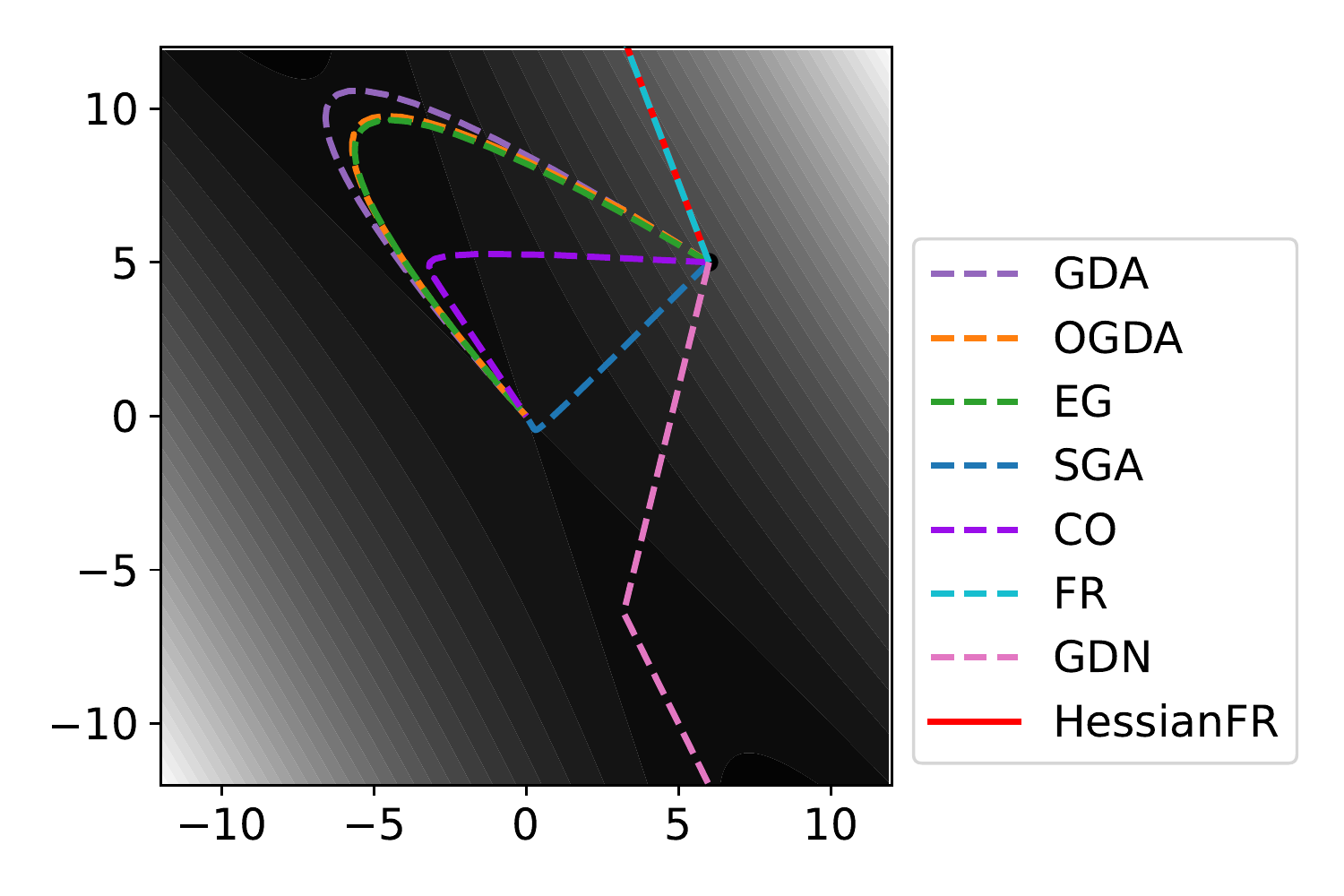}
    }\
    \subfloat[Only HessianFR, FR and GDN converge without cycling.]{
    \label{appendix_low_dim:c}   
    \includegraphics[width=0.6\textwidth]{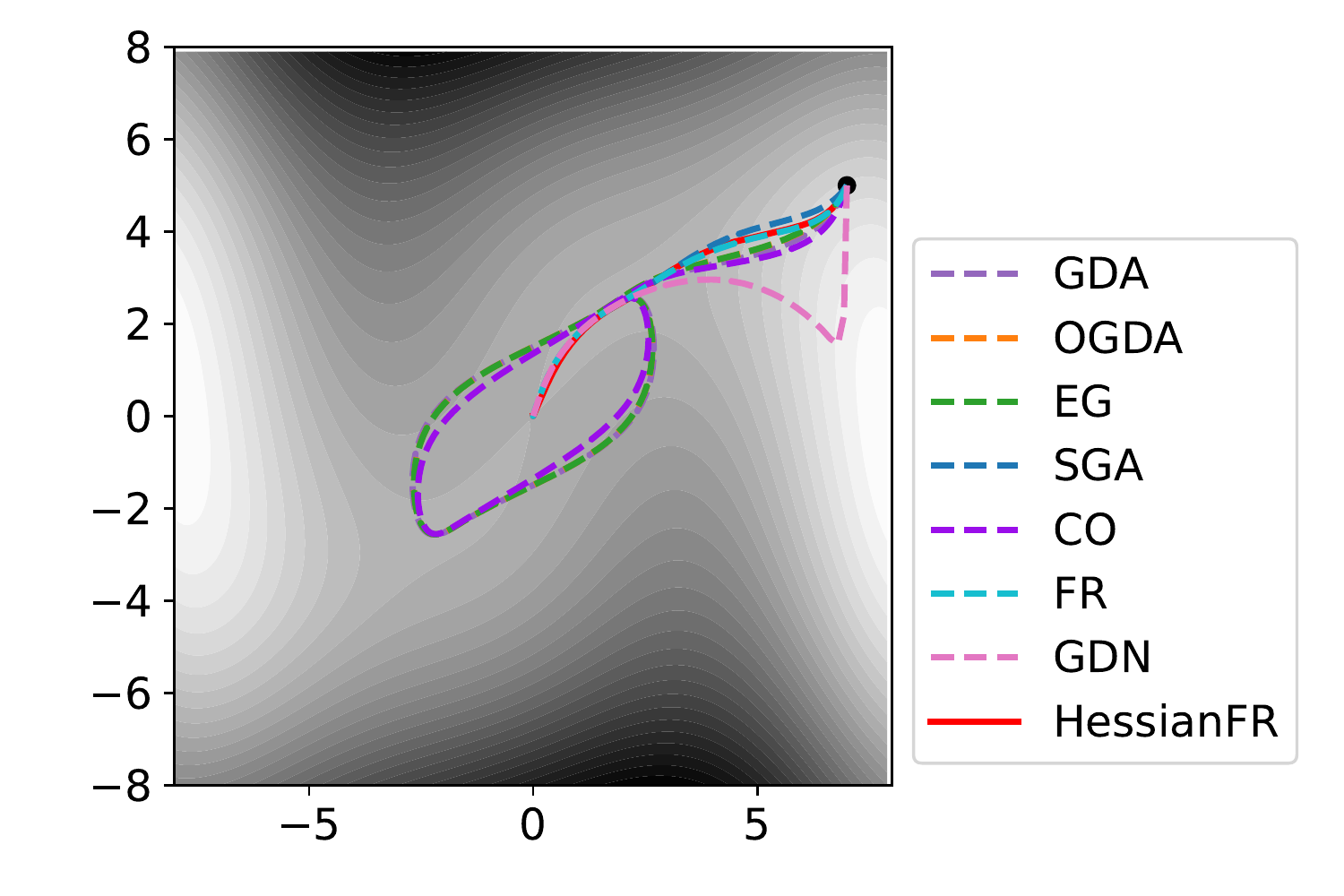}
    }\
    \caption{Trajectory of HessianFR and other algorithms in the three low-dimensional toy examples. \textbf{Left:} for $g_1$, $(0,0)$ is a local minimax. $\textbf{Middle:}$ for $g_2$, $(0,0)$ is \textbf{NOT} a local minimax but algorithms except HessianFR, FR as well as GDN converge to the non-local minimax point. \textbf{Right:} for $g_3$, $(0,0)$ is a local minimax while algorithms denoted in dashed lines are not convergent and  cycling around.}
    \label{low_dim}
    \vskip -0.1in
\end{figure*}

\subsection{Toy Examples: GANs for Synthetic Dataset}
We extend simple low dimensional examples to a more challenging task that trains generative adversarial networks on a synthetic dataset (mixtures of gaussian).  The training data is sampled independently from $\frac{1}{3}\mathcal{N}(-4, 0.3^2)+\frac{1}{3}\mathcal{N}(0, 0.3^2)+\frac{1}{3}\mathcal{N}(4, 0.3^2)$, which is the target distribution of the task. The source distribution, which is used for generation, is 3-dimensional standard normal distribution (i.e., $\mathcal{N}(\mathbf{0},\mathbf{I}_3)$).  Vanilla GAN (JS-GAN proposed by Goodfellow et al. \cite{goodfellow2014generative}) with fully connect networks is adopted here and we add $L_2$ regularization term to discriminators for training stability. We evaluate the convergence of HessianFR, FR, GDN, GDA, GDA-2 and EG in terms of gradient norms for both generators and discriminators. We first use GDA-2 (with learning rates $\eta_{\mathbf{x}}=\text{1E-04}$ for generators and $\eta_{\mathbf{y}}=\text{1E-04}$ for discriminators) to pretrain the model for 10K iterations and then run above algorithms (with fine tuned learning rates) for 100K iterations.   For more details about experimental settings and contents, please see the supplementary material.

Convergence rates (in terms of gradient norms) w.r.t. iteration steps and time for both generators and discriminators are shown in \cref{mixg1d_gen_disc_grad_itr} and \cref{appendix_mixg1d_gen_disc_grad_time} respectively. Here, we adopt 5 CG updates in each step for HessianFR-CG and FR-CG.  We have the following observations. (1) Local minimax based algorithms, e.g., HessianFR, FR and GDN  (both CG and DG based), achieve much better convergence than GDA (and its variants). It further demonstrates the advantages of local minimax than local Nash equilibrium in training GANs (differentiable sequential games). (2) GDN-CG is a little bit slower than HessianFR-CG and FR-CG mainly because it requires strict pretraining. (3) HessianFR-CG outperforms all other algorithms in terms of iteration steps for convergence while HessianFR-DG converges with least seconds. The success of HessianFR (and FR) is mainly due to the correction term and the convergence is further sped up with the Hessian information than FR.

\begin{figure}[htb]
  \centering
  \subfloat[Generators.] {
     \label{mixg1d_gen_disc_grad_itr:a}     
    \includegraphics[width=0.47\textwidth]{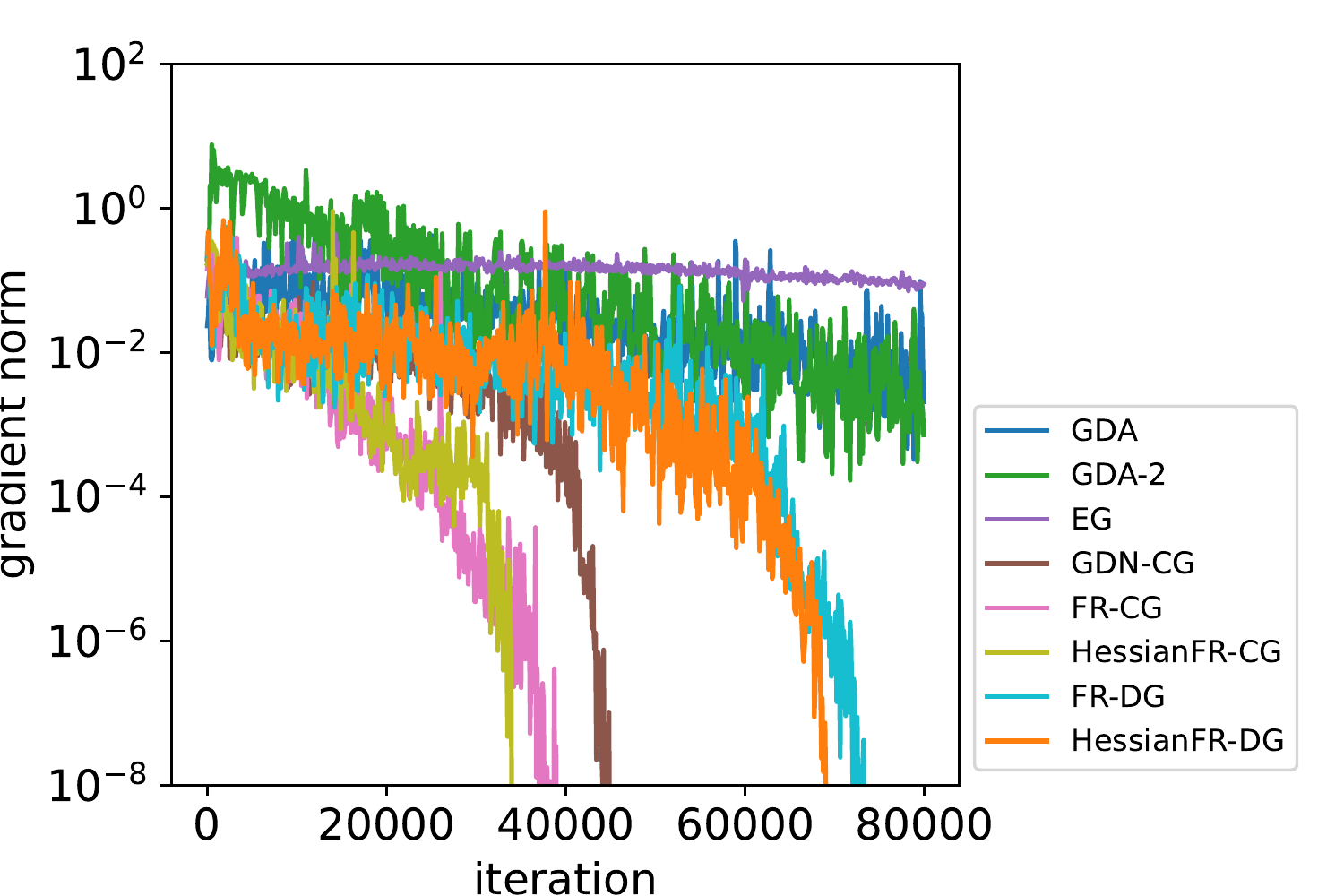}
    }\   
    \subfloat[Discriminators.] {
    \label{mixg1d_gen_disc_grad_itr:b}     
    \includegraphics[width=0.47\textwidth]{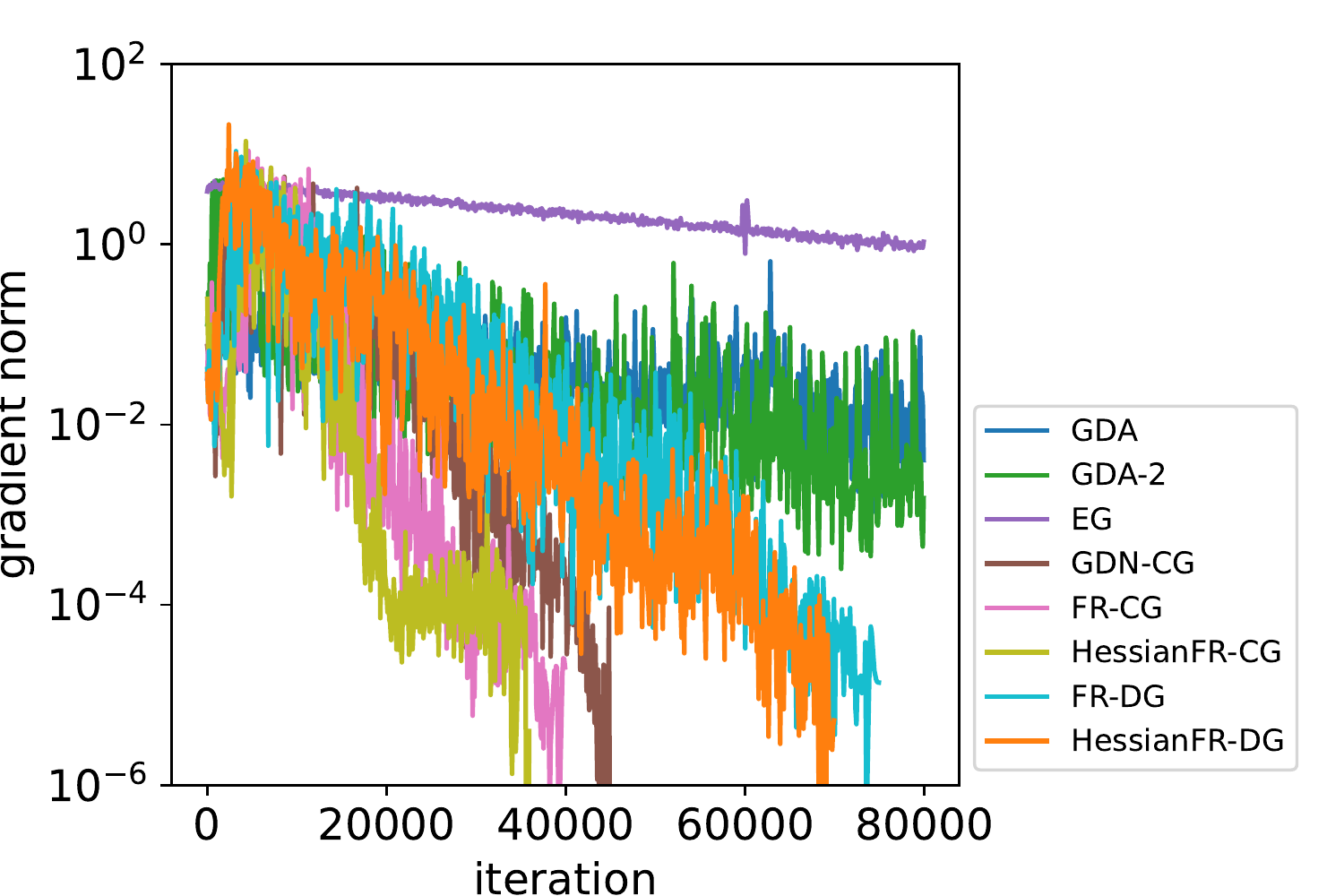}
    }\\
    \caption{Comparison of different algorithms on 1-d Mixture of Gaussian data (gradient norms) in terms of iteration steps. Gradient norms of generators and discriminators during training.}
    \label{mixg1d_gen_disc_grad_itr}
    \vskip -0.2in
\end{figure}

\begin{figure}[htb]
  \centering
  \subfloat[Generators.] {
     \label{appendix_mixg1d_gen_disc_grad_time:a}     
    \includegraphics[width=0.47\textwidth]{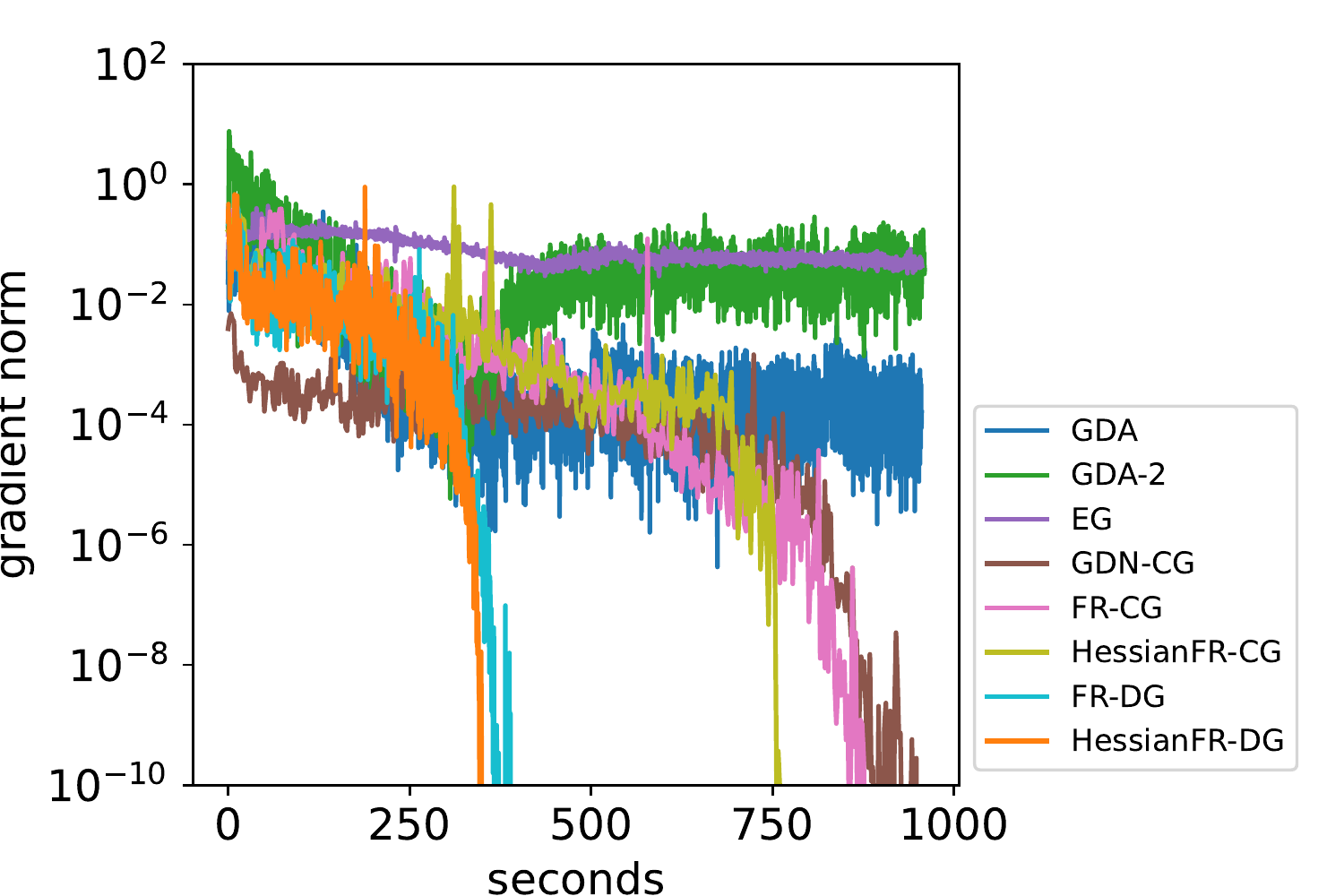}
    }\   
    \subfloat[Discriminators.] {
    \label{appendix_mixg1d_gen_disc_grad_time:b}     
    \includegraphics[width=0.47\textwidth]{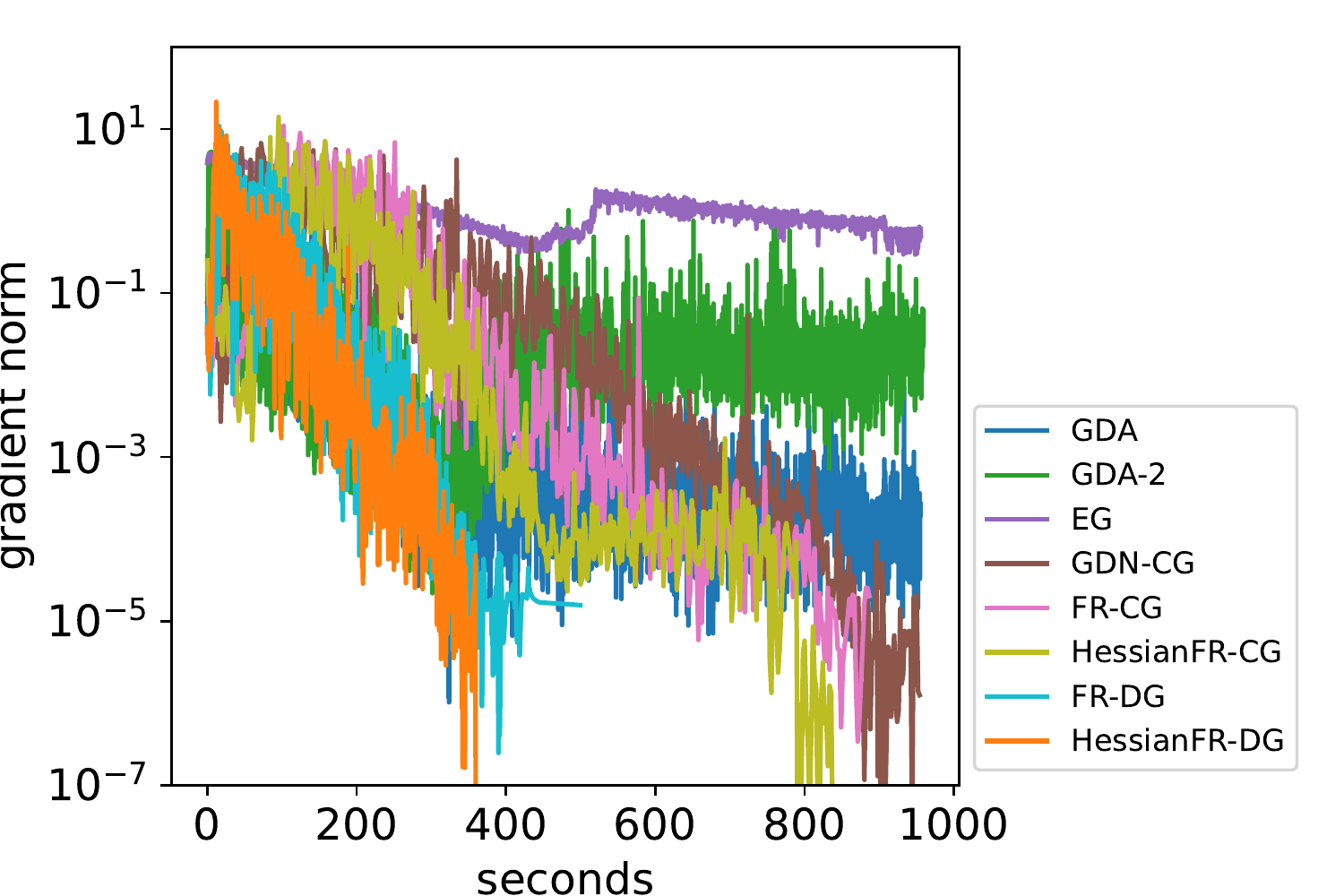}
    }\\
    \caption{Comparisons of different algorithms on 1-d Mixture of Gaussian data (gradient norms) in terms of time. Gradient norms of generators and discriminators during training.}
    \label{appendix_mixg1d_gen_disc_grad_time}
\end{figure}

\begin{figure}[htb]
  \centering
  \subfloat[Generators.] {
     \label{appendix_mixg1d_HessianFR_gen_disc_grad_itr:a}     
    \includegraphics[width=0.47\textwidth]{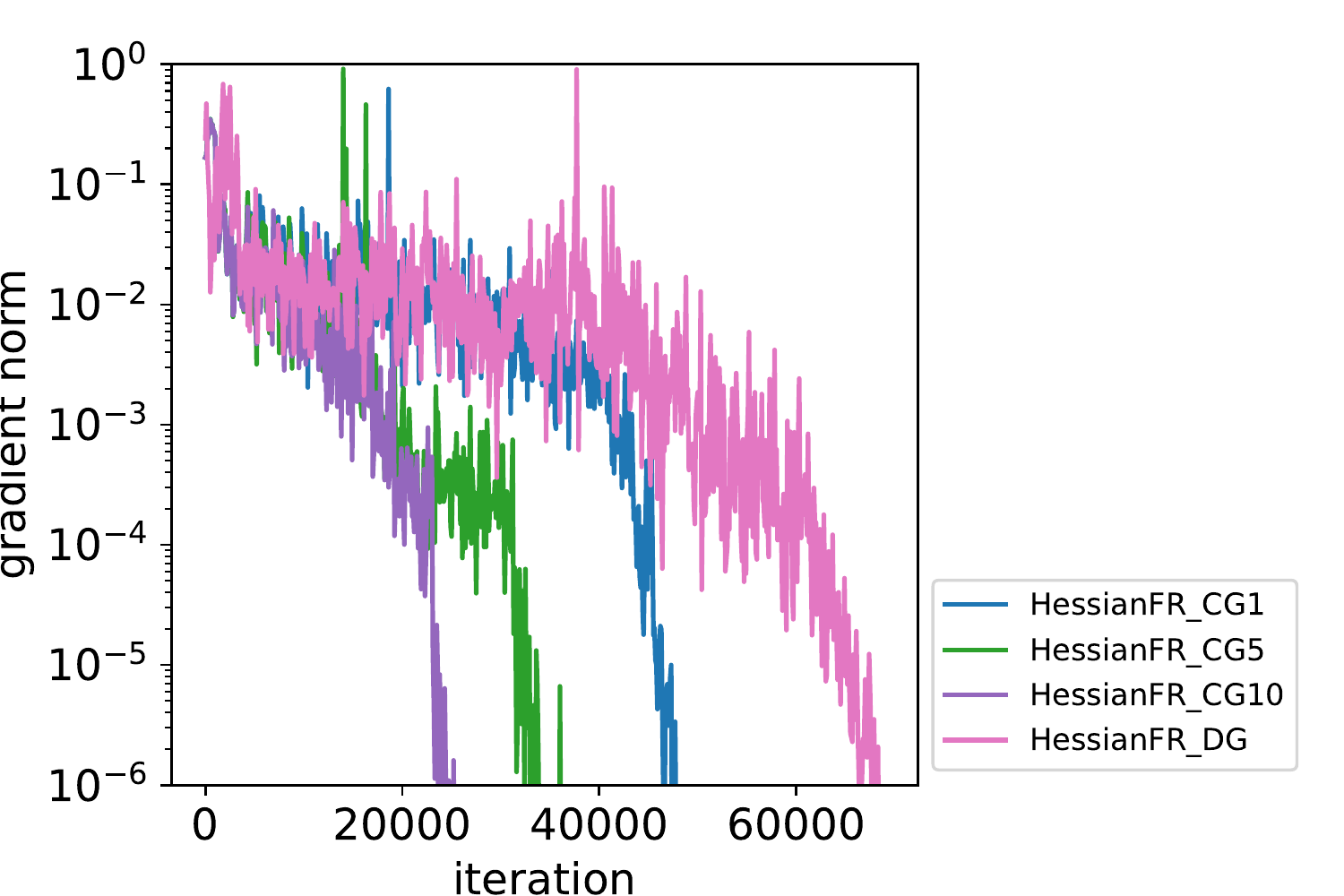}
    }\   
    \subfloat[Discriminators.] {
    \label{appendix_mixg1d_HessianFR_gen_disc_grad_itr:b}     
    \includegraphics[width=0.47\textwidth]{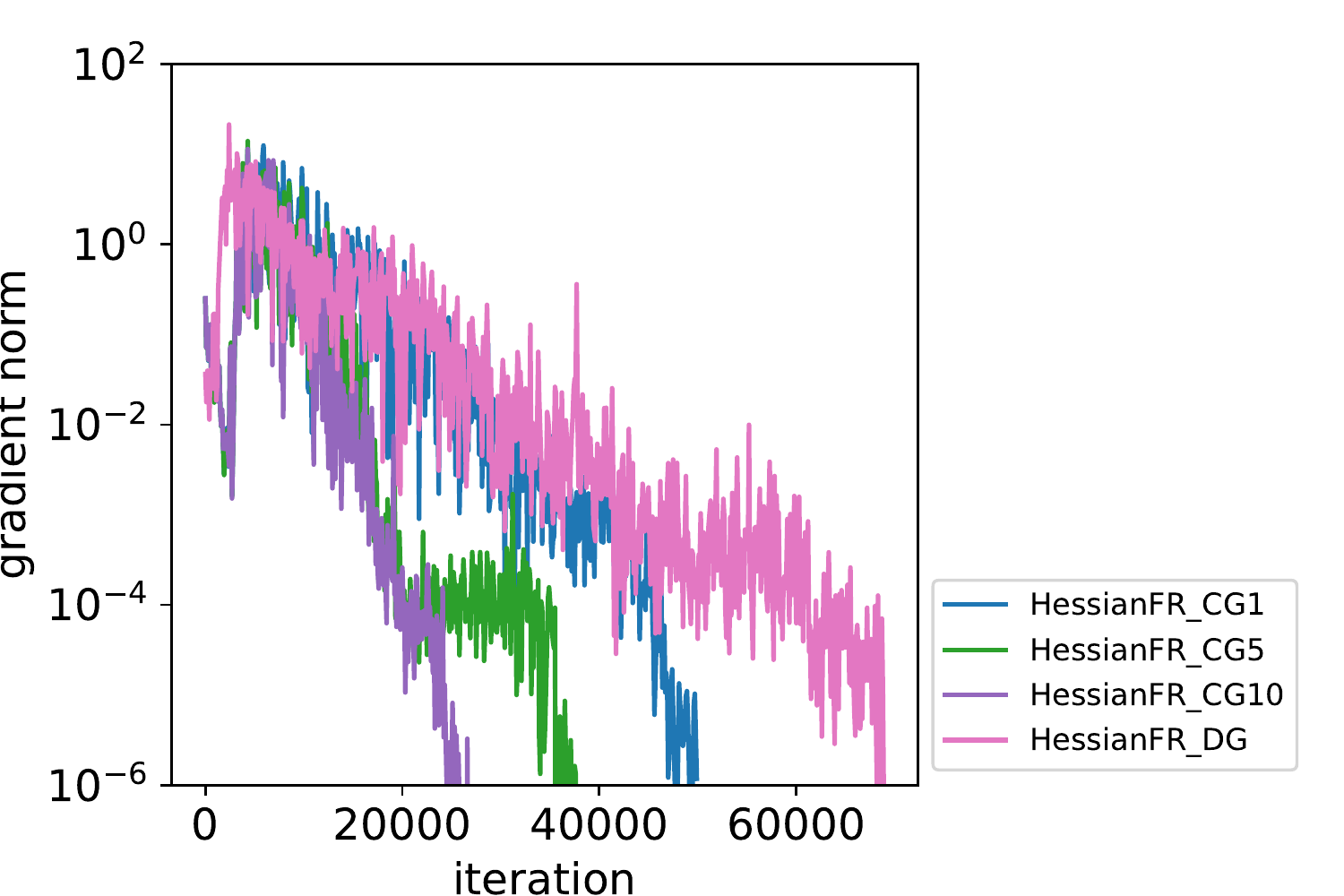}
    }\\
    \caption{Comparisons of HessianFR (with different methods in approximating inverse of Hessian matrix) on 1-d Mixture of Gaussian data (gradient norms) in terms of iteration steps. Gradient norms of generators and discriminators during training.}
    \label{appendix_mixg1d_HessianFR_gen_disc_grad_itr}
\end{figure}

\begin{figure}[htb]
  \centering
  \subfloat[Generators.] {
     \label{appendix_mixg1d_HessianFR_gen_disc_grad_time:a}     
    \includegraphics[width=0.47\textwidth]{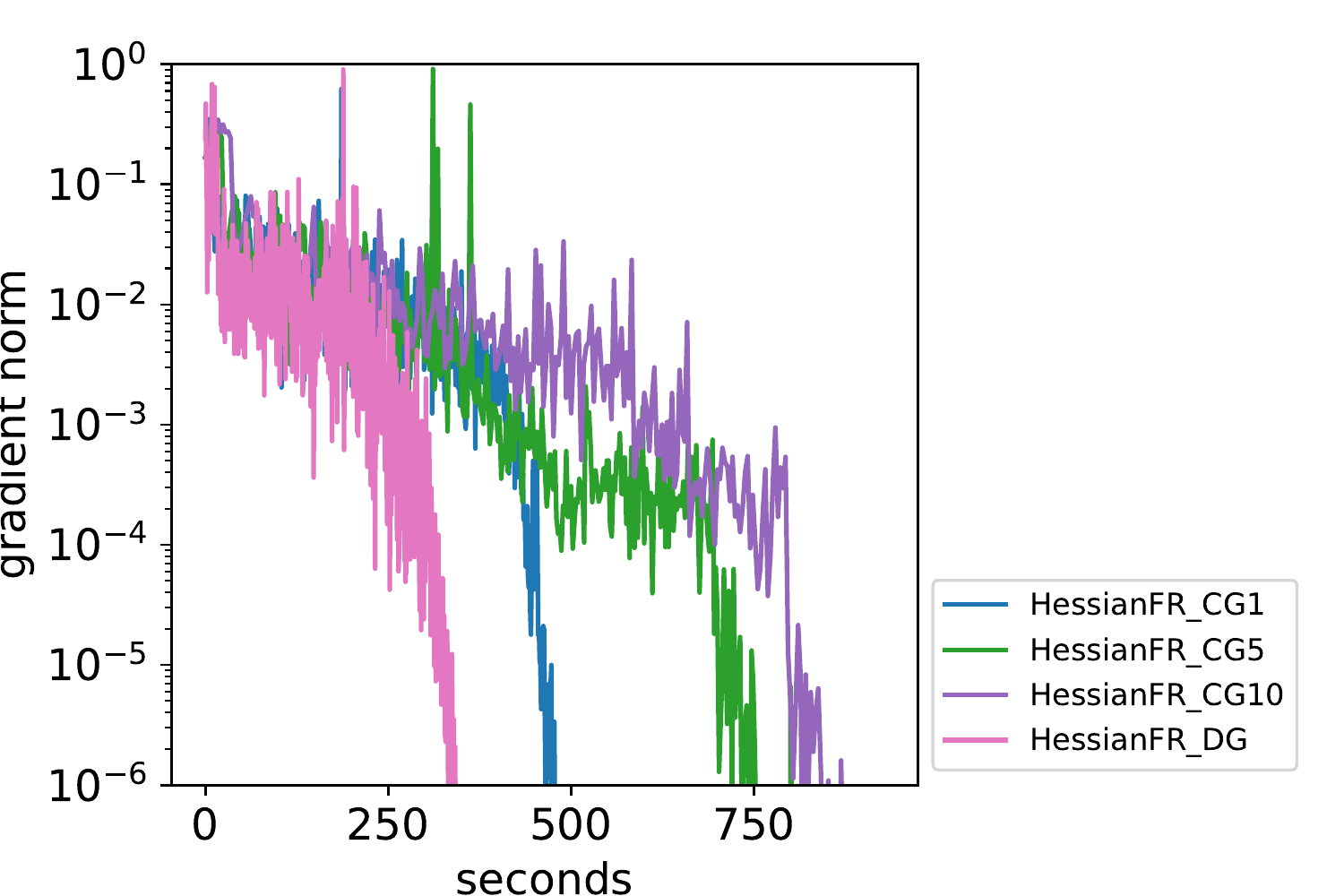}
    }\   
    \subfloat[Discriminators.] {
    \label{appendix_mixg1d_HessianFR_gen_disc_grad_time:b}     
    \includegraphics[width=0.47\textwidth]{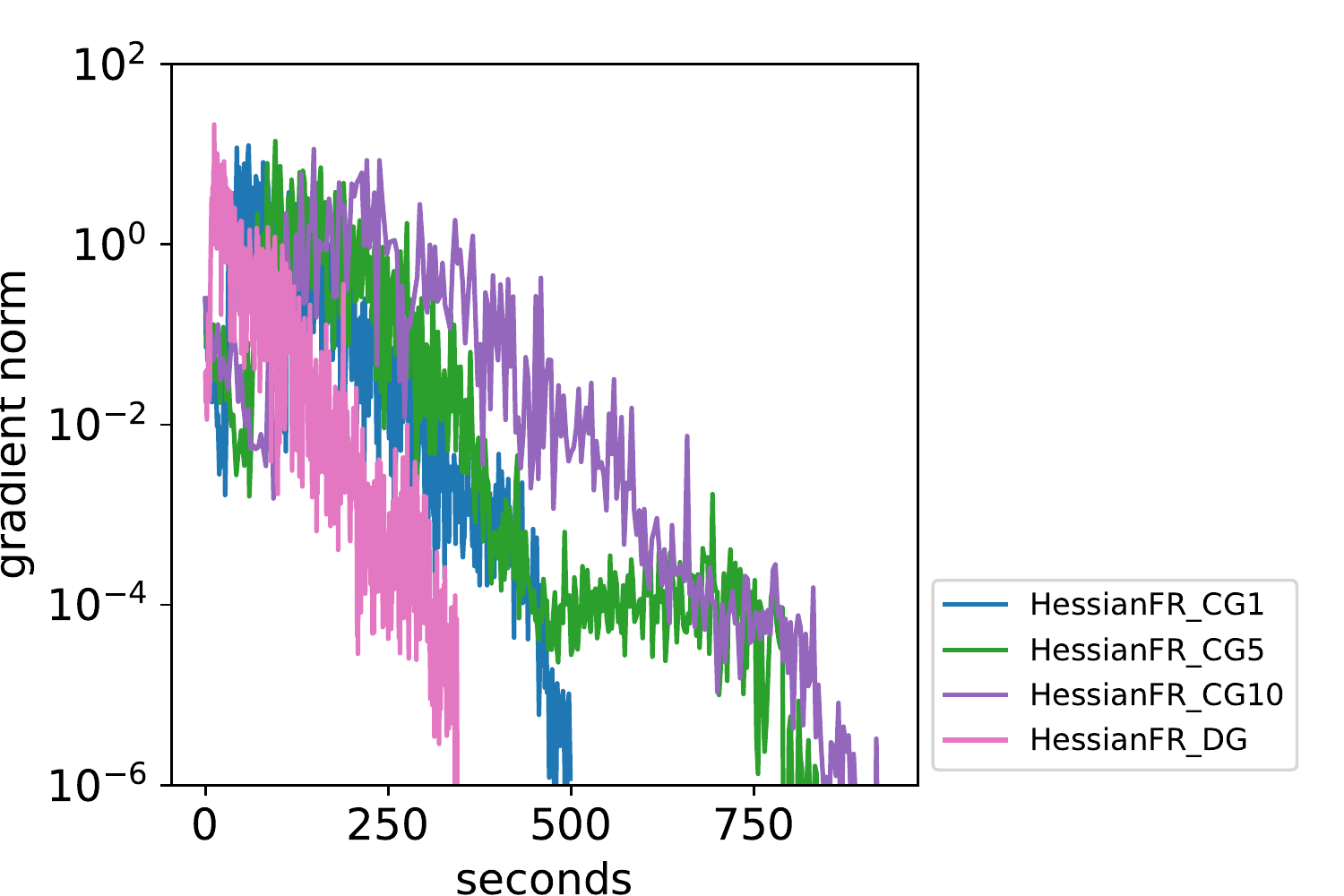}
    }\\
    \caption{Comparisons of HessianFR (with different methods in approximating inverse of Hessian matrix) on 1-d Mixture of Gaussian data (gradient norms) in terms of time. Gradient norms of generators and discriminators during training.}
    \label{appendix_mixg1d_HessianFR_gen_disc_grad_time}
\end{figure}

We further numerically investigate the convergence of HessianFR with different methods to approximate the Hessian inverse. We compare the Diagonal method (DG) and the Conjugate Gradient method (CG1, CG5 and CG10). Here, CG5 represents 5 CG updates in each step and similar settings hold for CG1 and CG10. Intuitively, more CG steps imply more accurate approximations to Hessian inverse but are more computationally expensive while DG is the most rough estimation but is much cheaper. Results displayed in  \cref{appendix_mixg1d_HessianFR_gen_disc_grad_itr} and \cref{appendix_mixg1d_HessianFR_gen_disc_grad_time} validate our expectation. Diagonal method serves the worst approximation to Hessian inverse that HessianFR-DG requires much more iterations for convergence. After all, DG has only one freedom in approximation but CG makes use of the  Hessian information (Hessian-vector products). More steps for CG result in better approximation to Hessian inverse and thus lead to fewer iterations for convergence. However, CG steps are computationally expensive. \cref{appendix_mixg1d_HessianFR_gen_disc_grad_time} shows that even though DG requires more iterations for convergence, it is the most time saving. Therefore, we need to consider the trade off between the accuracy and computational costs in practice. 

Similar experiments as mixtures of gaussian are conducted for swiss roll data. Please refer to the supplementary materials for details. HessianFR outperforms other algorithms in terms of iterations (HessianFR-CG) and seconds (HessianFR-DG) for convergence.

\subsection{Large-scale Datasets}
To further show the performance of the proposed algorithm in real applications, we apply it to train generative adversarial networks on large scale datasets (e.g., MNIST \cite{lecun1998mnist}, CIFAR-10 \cite{krizhevsky2009learning} and CelebA \cite{liu2015faceattributes}). Here, for higher quality generation, we adopt WGAN-GP \cite{gulrajani2017improved} with standard CNNs and ResNet architectures. Fréchet inception distance (FID) \cite{heusel2017gans} is utilized to measure the quality of generated images and lower scores imply better generation. Learning rates for all algorithms are fine tuned. We recommend learning rates to be $\eta_{\mathbf{x}}=\text{2E-05}$ for generators and $(\eta_{\mathbf{y}1},\eta_{\mathbf{y}2})=(\text{5E-05,2E-05})$ for discriminators. 
We put all experimental details in the supplementary material.

As reported in \cref{fid_all_dataset}, the proposed HessianFR algorithm (with DG and CG1) outperforms GDA-5 (which is the most popular algorithm in GANs training), EG (an extension of GDA) and FR in terms of image generation quality (FID), in several tasks of large scale stochastic (mini-batch) learning. HessianFR-CG5 is much more computationally expensive than others and the accurate computation of Hessian inverse is weaken by the stochasticity. Note that the computational cost of HessianFR-DG is comparable to that of GDA-5 while HessianFR-CG1 is less than twice that of GDA-5.  \cref{wgan_fid_seconds} and  \cref{wgan_fid_seconds2} display the trajectory of FID scores during training. HessianFR-DG and HessianFR-CG1 are comparable but both outperform other baselines. Some generated images are shown in \cref{vis_cifar10_gen_data}, \cref{vis_mnist_gen_data} and \cref{vis_celeba_gen_data}.

\begin{table}[t]
\caption{FID scores of models optimized by different optimizers (within 100K iterations) on MNIST, CIFAR-10 and CelebA datasets. Here, ``Time" indicates the running time (seconds) of 100 iterations}
\label{fid_all_dataset}
\begin{center}
\small
\begin{tabular}{l|cc|cc|cc|cc}
\hline
& \multicolumn{2}{c|}{MNIST (CNN)} & \multicolumn{2}{c|}{CIFAR-10 (CNN)} & \multicolumn{2}{c|}{CIFAR-10 (ResNet)} & \multicolumn{2}{c}{CelebA (ResNet)}    \\ \hline
Optimizer & FID  & Time & FID &  Time & FID  &  Time  & FID  &  Time   \\
\hline
GDA-5 & 5.04 & 5.50 & 28.43 & 17.40 & 23.48 & 44.81 & 19.34  & 98.74\\
EG & 4.83 & 3.82 & 33.01 & 12.85 & 25.38 & 29.32 & 24.45 & 68.79\\
FR-DG & 4.74 & 4.68 & 27.01 & 14.96 & 22.42  & 38.37& 13.32  & 80.44 \\
FR-CG1 & 4.85 & 13.14 & \textbf{26.38} & 34.80 & \underline{19.68} & 88.70& \underline{11.41} & 174.22\\
HessianFR-DG & 4.79 & 4.68 &  26.59 & 14.96& 20.83 & 38.37 & 12.34  & 80.44\\
HessianFR-CG1 & \textbf{4.65} & 13.14  & \underline{26.44} & 34.80 & \textbf{18.12} & 88.70 & \textbf{10.99} & 174.22 \\
HessianFR-CG5 & \underline{4.68} & 27.86 & - & - & - & - & - & - \\
\hline
\end{tabular}
\end{center}
\end{table}

\begin{figure}[htb]
    \centering
    \includegraphics[width=0.7\textwidth]{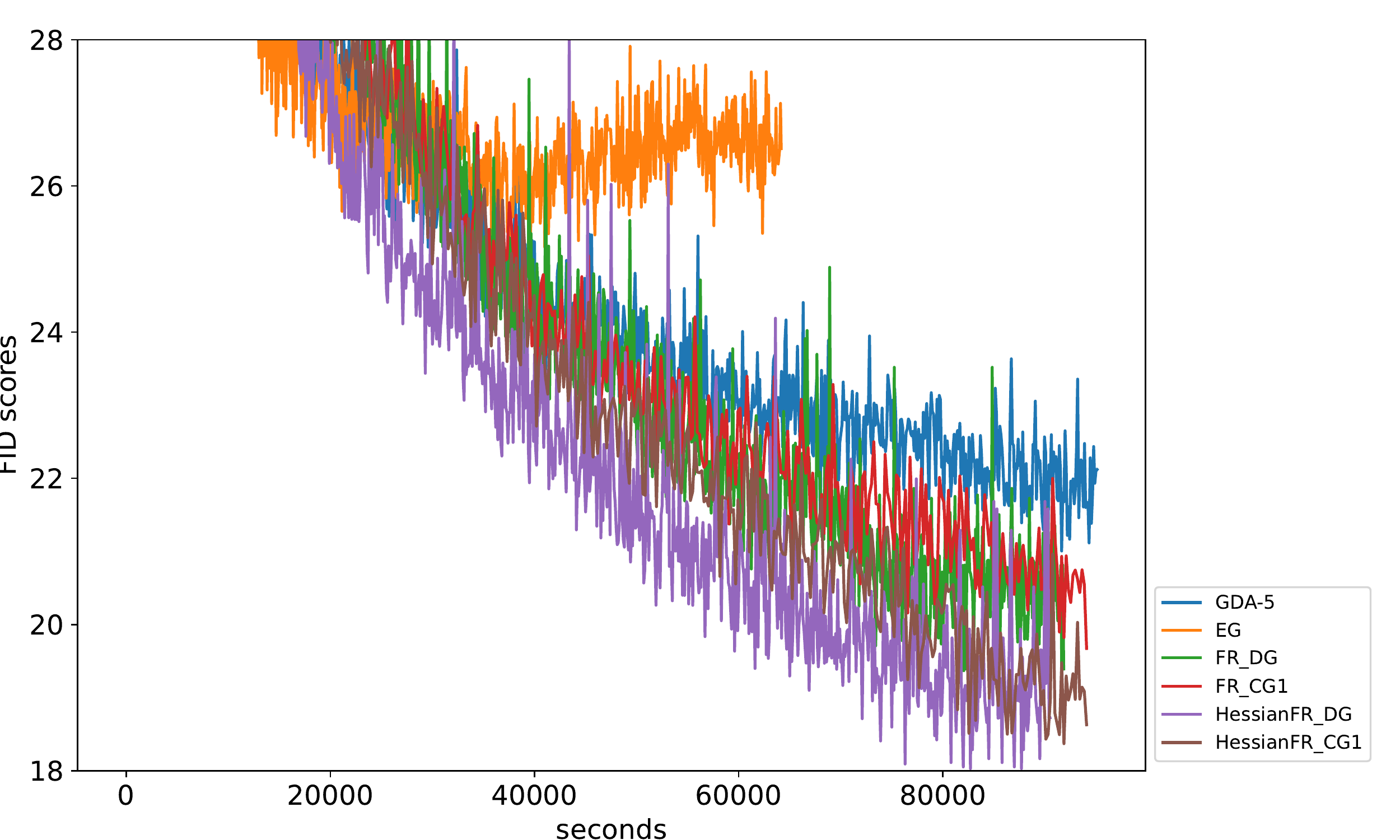}
    \caption{Comparison of different algorithms on CIFAR-10 data (ResNet) in terms of seconds. The trajectory of FID scores during training.}
    \label{wgan_fid_seconds}
\end{figure}

\begin{figure}[htb]
  \centering
 \includegraphics[width=0.7\textwidth]{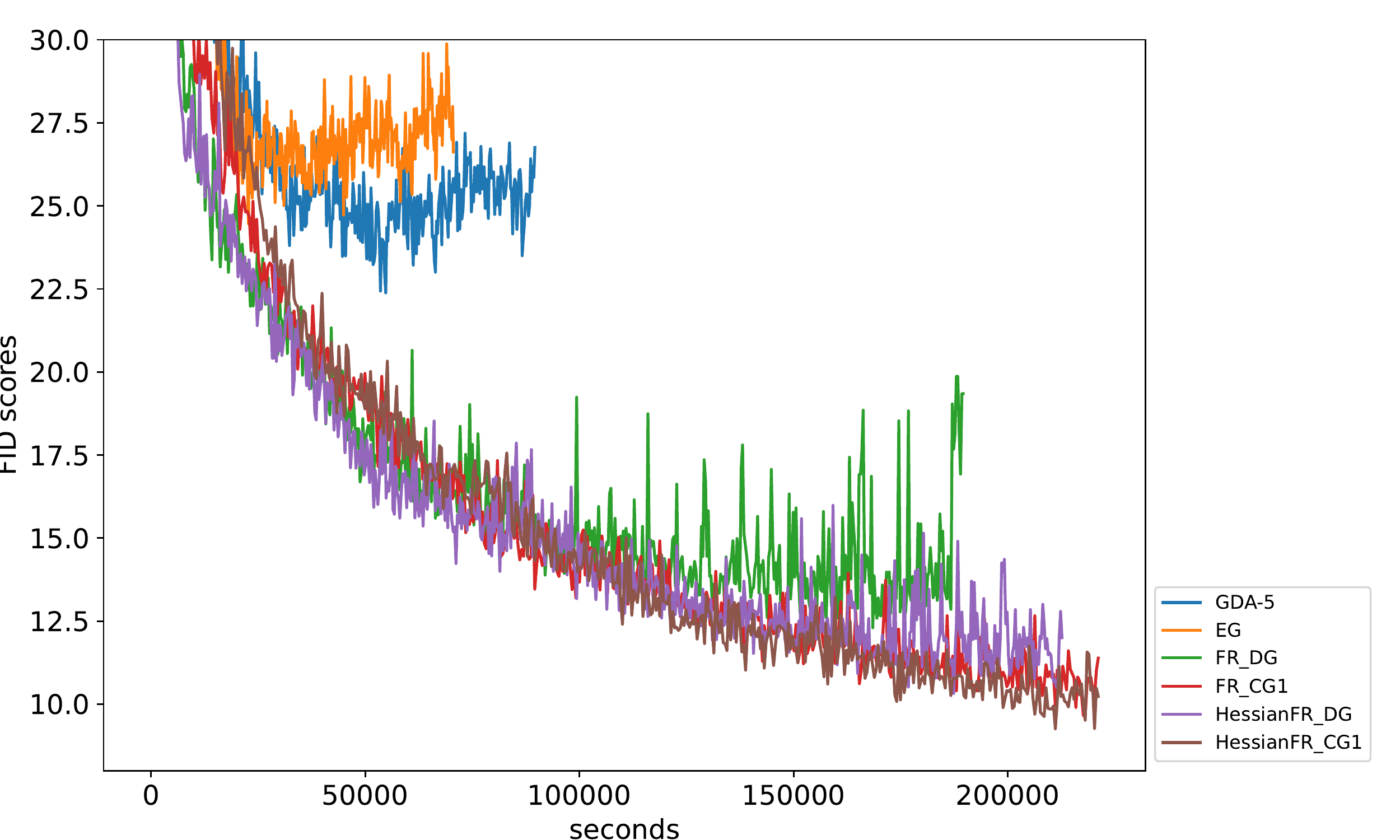}
    \caption{Comparison of different algorithms on CelebA data (ResNet) in terms of seconds. The trajectory of FID scores during training.}
    \label{wgan_fid_seconds2}
\end{figure}

\begin{figure*}[h!]
  \centering
  \subfloat[HessianFR-DG (Standard CNN).] {
     \label{vis_cifar10_gen_data:a}     
    \includegraphics[width=0.45\textwidth]{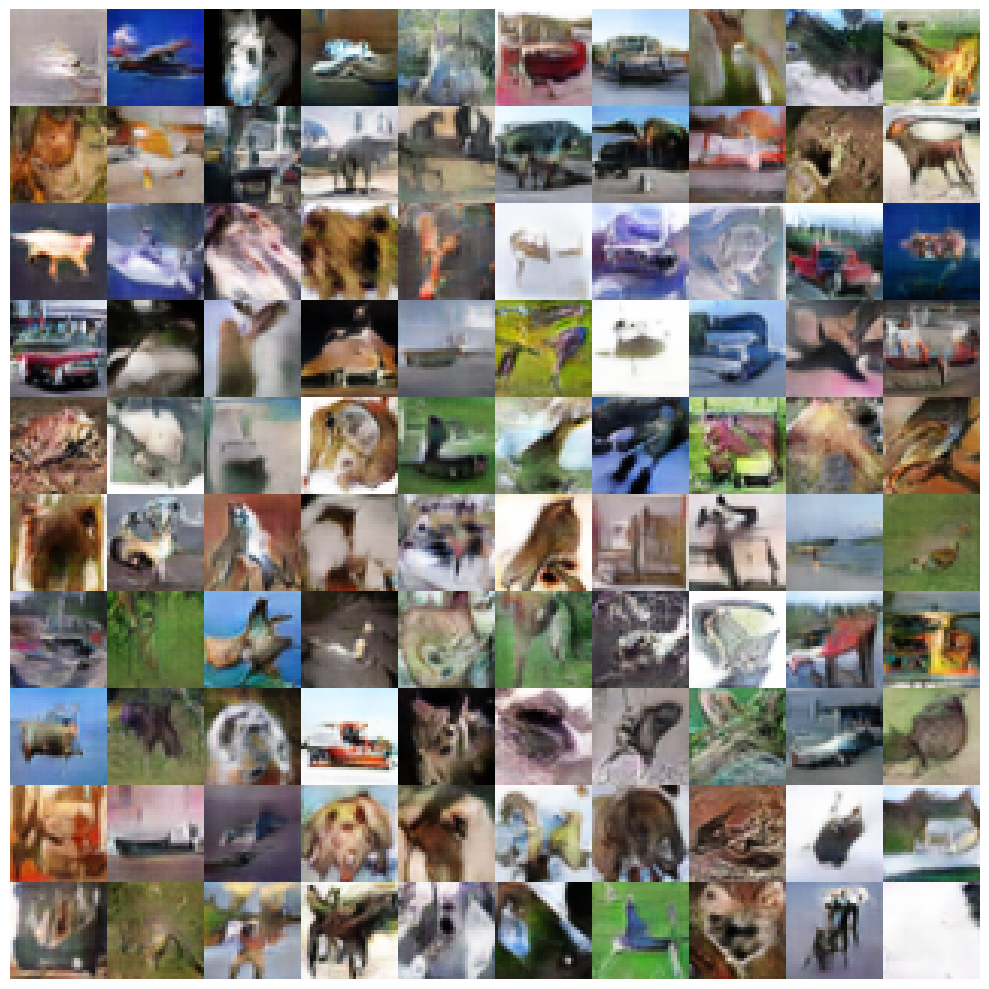}
    }
    \hfill
    \subfloat[HessianFR-CG1 (Standard CNN).] {
    \label{vis_cifar10_gen_data:b}     
    \includegraphics[width=0.45\textwidth]{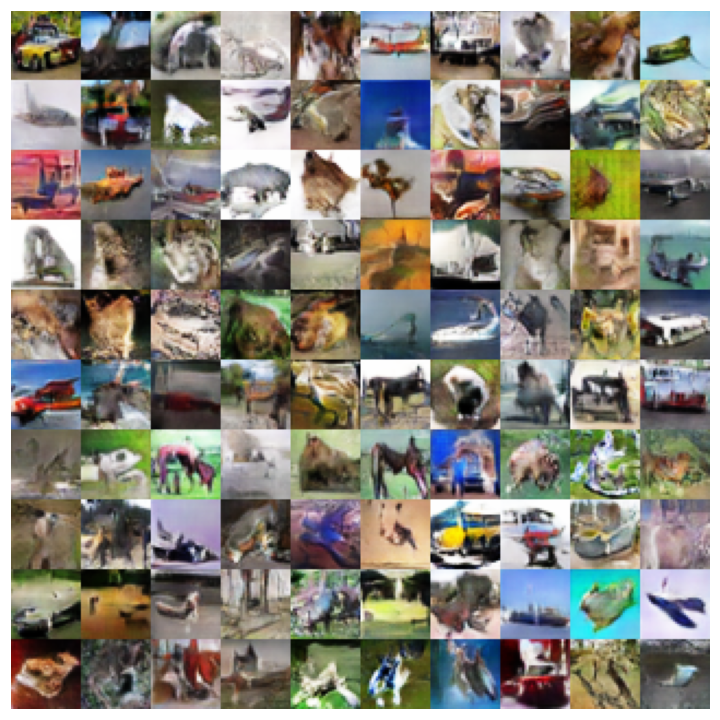}
    }
    \newline
    \subfloat[HessianFR-DG (ResNet).]{
    \label{vis_cifar10_gen_data:c}   
    \includegraphics[width=0.45\textwidth]{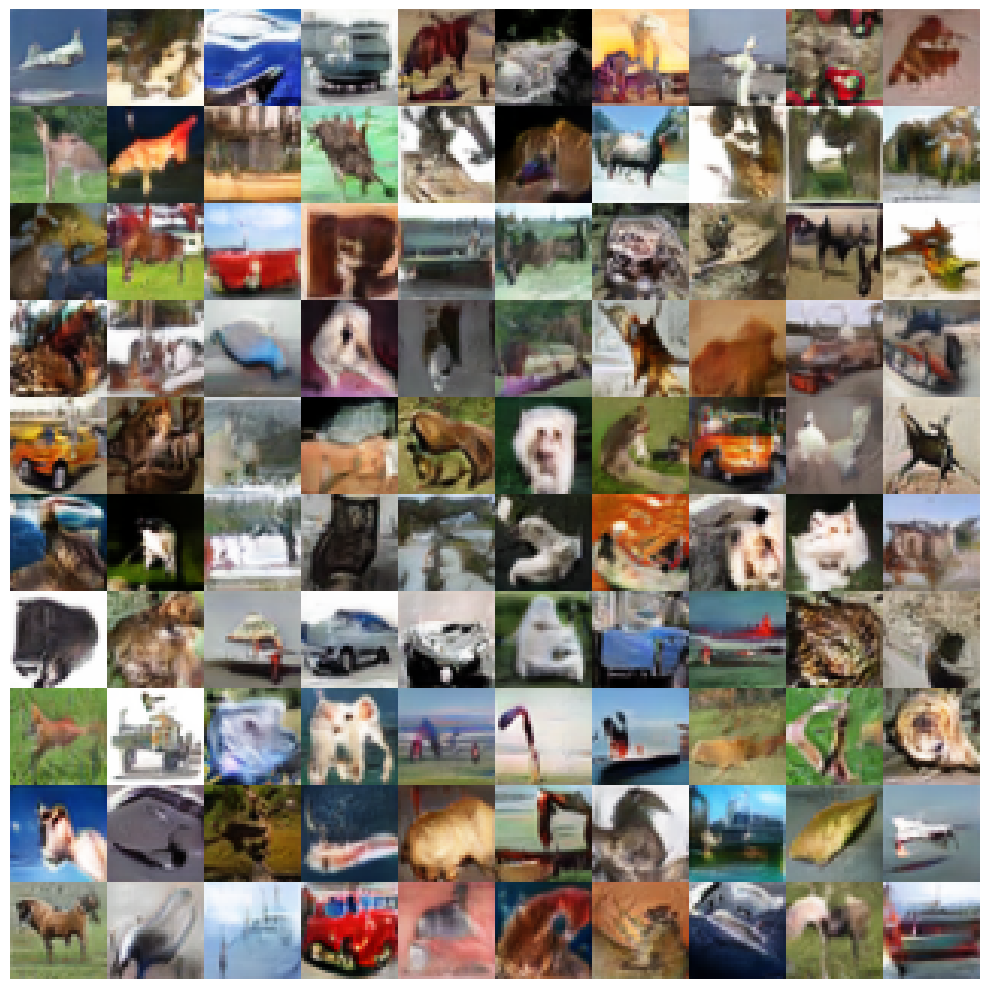}
    }
    \hfill
    \subfloat[HessianFR-CG1 (ResNet).]{
    \label{vis_cifar10_gen_data:d}   
    \includegraphics[width=0.45\textwidth]{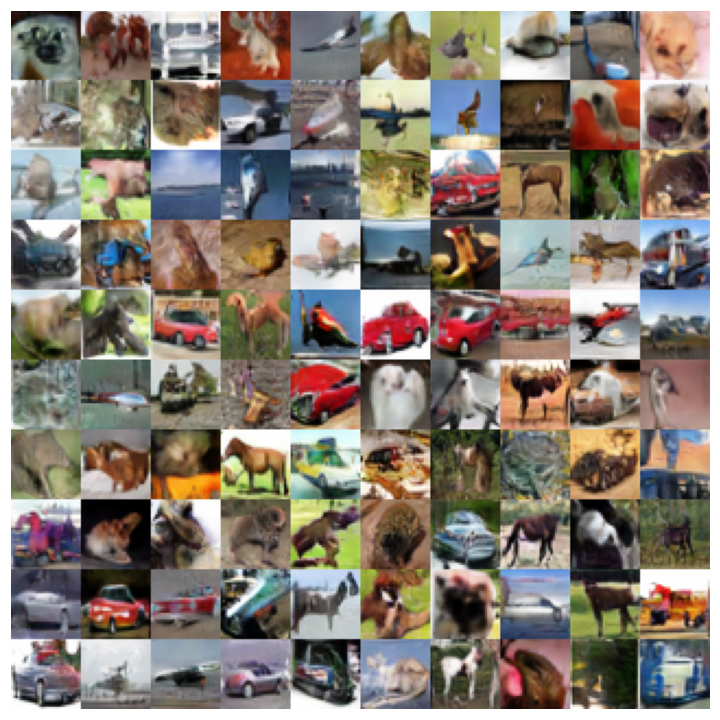}
    }
    \caption{Random samples from the generated distribution trained on CIFAR-10 dataset.}
    \label{vis_cifar10_gen_data}
\end{figure*}

\begin{figure*}[h!]
  \centering
  \subfloat[HessianFR-DG (Standard CNN).] {
     \label{vis_mnist_gen_data:a}     
    \includegraphics[width=0.45\textwidth]{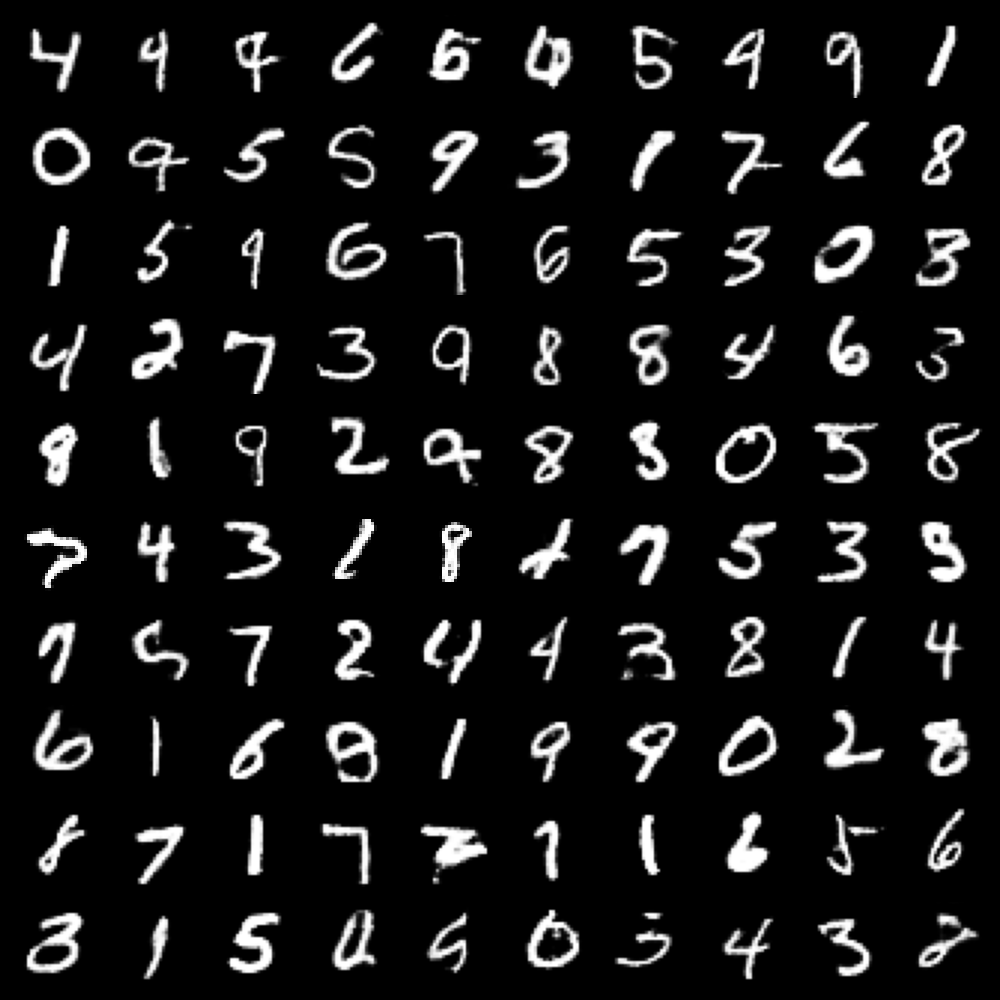}
    }
    \hfill
    \subfloat[HessianFR-CG1 (Standard CNN).] {
    \label{vis_mnist_gen_data:b}     
    \includegraphics[width=0.45\textwidth]{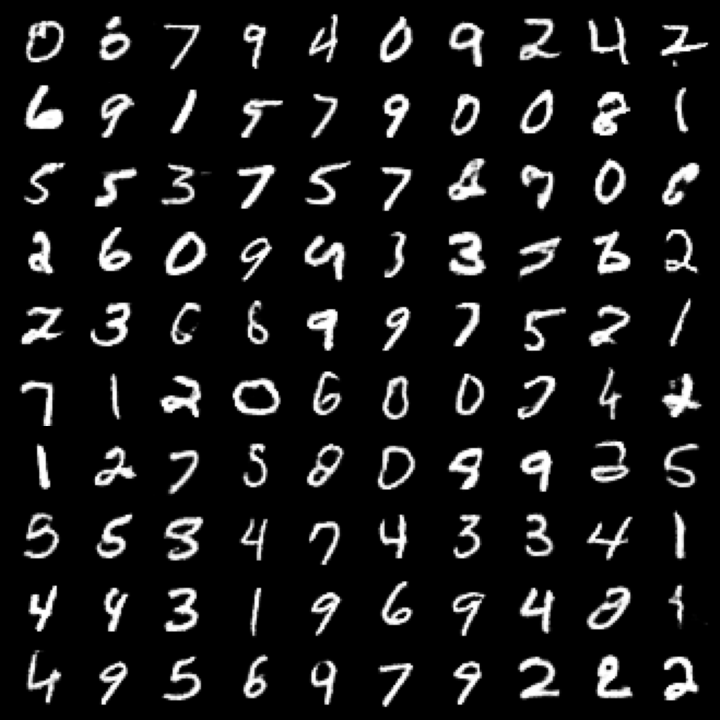}
    }
    \caption{Random samples from the generated distribution trained on MNIST dataset.}
    \label{vis_mnist_gen_data}
\end{figure*}

\begin{figure*}[h!]
  \centering
  \subfloat[HessianFR-DG (ResNet).] {
     \label{vis_celeba_gen_data:a}     
    \includegraphics[width=0.45\textwidth]{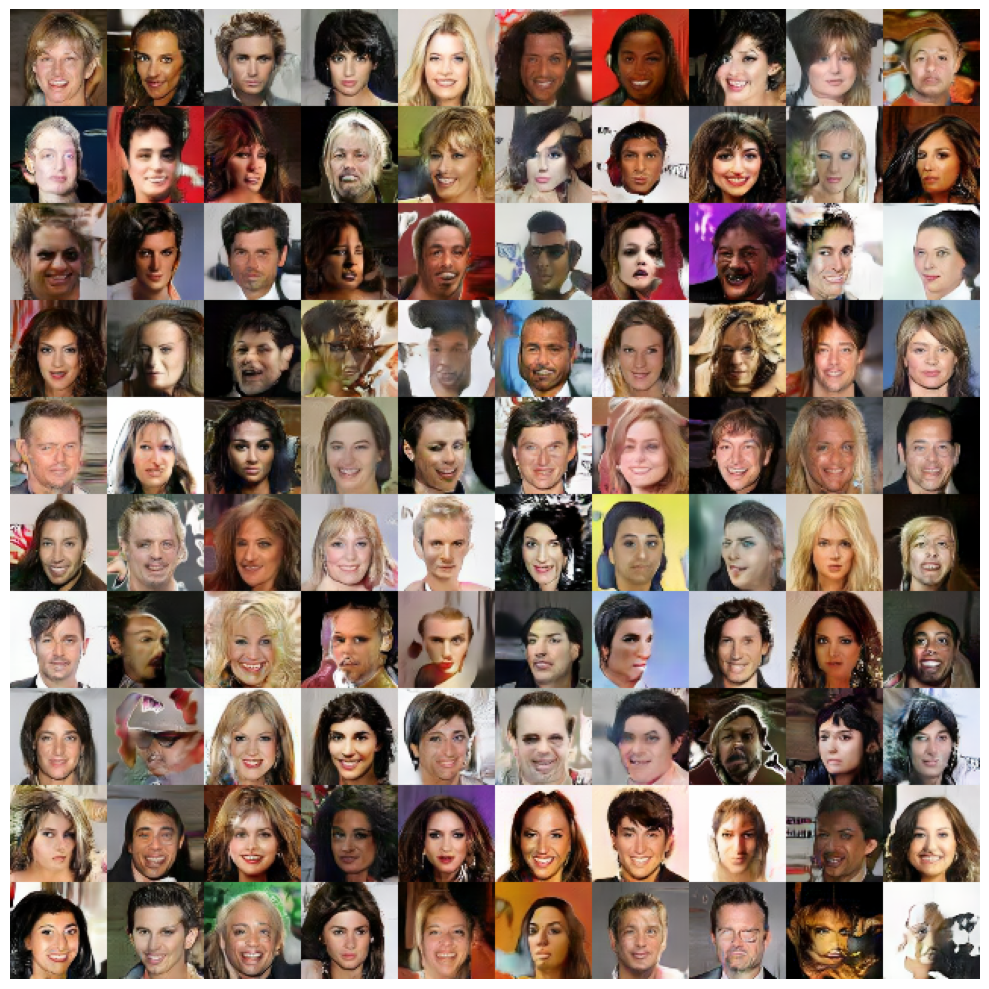}
    }
    \hfill
    \subfloat[HessianFR-CG1 (ResNet).] {
    \label{vis_celeba_gen_data:b}     
    \includegraphics[width=0.45\textwidth]{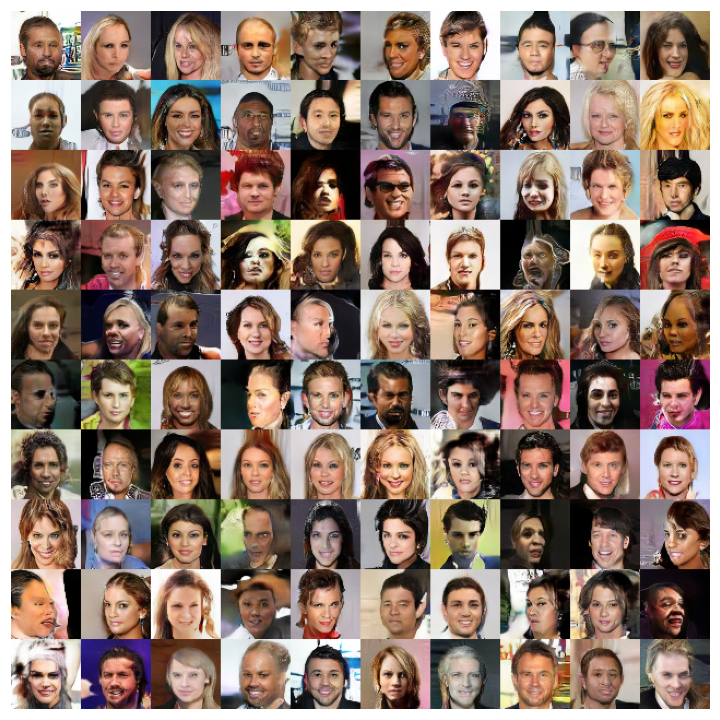}
    }
    \caption{Random samples from the generated distribution trained on CelebA dataset.}
    \label{vis_celeba_gen_data}
\end{figure*}

\section{Conclusion}
In this paper, we study a local minimax based algorithm (HessianFR) in solving sequential min-max problems. We first discuss the reasonableness and advantages of local minimax over local Nash equilibrium in sequential min-max problems. Then, to improve and generalize FR algorithm \cite{Wang2020On}, we propose the HessianFR which further utilizes Hessian information over FR.  Local convergence properties of HessianFR for both deterministic and stochastic optimizations are studied. Numerical experiments on toy examples and large scale datasets validate the effectiveness of the proposed HessianFR algorithm in training GANs for image generation. 

\appendix
\section{Local Convergence of GDA and its Variants}
\label{appendix_local_cong_GDA}
In this part, we briefly introduce the local convergence of GDA and its variant in terms of local minimax. The work has been done in \cite{zhang2020newton}. For completeness, we put the results here. 

Algorithmic update for Two Time-Scale GDA:
\begin{equation*}
    \begin{array}{ll}
        \mathbf{x}_{t+1} = \mathbf{x}_{t} - \eta_{\mathbf{x}} \nabla_{\mathbf{x}}f(\mathbf{x}_t,\mathbf{y}_t),\\ 
        \mathbf{y}_{t+1} = \mathbf{y}_{t} + \eta_{\mathbf{y}}\nabla_{\mathbf{y}}f(\mathbf{x}_t,\mathbf{y}_t).
    \end{array}
\end{equation*}

\begin{theorem}
[Local convergence of Two Time-Scale GDA] Let $\mathbf{z}^{*}=(\mathbf{x}^{*},\mathbf{y}^{*})$ to be a strict local minimax. For any $\delta > 0$, $\exists c_0 > 0$, such that if $c = \eta_{\mathbf{y}}/\eta_{\mathbf{x}}>c_0$, Two Time-Scale GDA has asymptotic linear convergence rate $\sigma=\max\{\sigma_1, \sigma_2\}$, where $\sigma_1= \max \{|1-\eta_{\mathbf{x}}\lambda_{\text{min}}(\mathbf{H}_{\mathbf{x x}}^{*} - \mathbf{H}_{\mathbf{x y}}^{*}\mathbf{H}_{\mathbf{y y}}^{*-1}\mathbf{H}_{\mathbf{y x}}^{*})|, |1-\eta_{\mathbf{x}}\lambda_{\text{max}}(\mathbf{H}_{\mathbf{x x}}^{*} - \mathbf{H}_{\mathbf{x y}}^{*}\mathbf{H}_{\mathbf{y y}}^{*-1}\mathbf{H}_{\mathbf{y x}}^{*})| \}+ \eta_{\mathbf{x}}\delta$ and $\sigma_2= \max \{|1-\eta_{\mathbf{y}}\lambda_{\text{min}}(\mathbf{H}_{\mathbf{y y}}^{*})|, |1-\eta_{\mathbf{y}}\lambda_{\text{max}}(\mathbf{H}_{\mathbf{y y}}^{*})| \}+ \eta_{\mathbf{y}}\delta$. To guarantee the convergence that $\sigma < 1$, we let $\delta$ to be small enough and properly select the learning rates $\eta_{\mathbf{x}}$ and $\eta_{\mathbf{y}}$.
\end{theorem}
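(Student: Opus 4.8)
The plan is to linearize the Two Time-Scale GDA map $g(\mathbf{z})=\bigl(\mathbf{x}-\eta_{\mathbf{x}}\nabla_{\mathbf{x}}f,\ \mathbf{y}+\eta_{\mathbf{y}}\nabla_{\mathbf{y}}f\bigr)$ at the fixed point $\mathbf{z}^{*}$ and to locate the spectrum of its Jacobian
\begin{equation*}
\mathbf{J}^{*}=\mathbf{I}-\mathbf{\Lambda}\,\mathbf{H}^{*},\qquad \mathbf{\Lambda}=\diag(\eta_{\mathbf{x}}\mathbf{I},-\eta_{\mathbf{y}}\mathbf{I}),\qquad \mathbf{H}^{*}=\begin{bmatrix}\mathbf{H}_{\mathbf{x x}}^{*} & \mathbf{H}_{\mathbf{x y}}^{*}\\ \mathbf{H}_{\mathbf{y x}}^{*} & \mathbf{H}_{\mathbf{y y}}^{*}\end{bmatrix},
\end{equation*}
in the two-time-scale regime $c=\eta_{\mathbf{y}}/\eta_{\mathbf{x}}\to\infty$. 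Writing $\mathbf{S}^{*}:=\mathbf{H}_{\mathbf{x x}}^{*}-\mathbf{H}_{\mathbf{x y}}^{*}\mathbf{H}_{\mathbf{y y}}^{*-1}\mathbf{H}_{\mathbf{y x}}^{*}$ and $\mathbf{K}^{*}:=\mathbf{H}_{\mathbf{y y}}^{*-1}\mathbf{H}_{\mathbf{y x}}^{*}$, I first recall from \cref{sufficient_local_minimax} that at a strict local minimax $-\mathbf{H}_{\mathbf{y y}}^{*}\succ\mathbf{0}$ (so $\mathbf{H}_{\mathbf{y y}}^{*}$ is invertible) and $\mathbf{S}^{*}\succ\mathbf{0}$. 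Applying the same similarity transform $\mathbf{T}=\left[\begin{smallmatrix}\mathbf{I}&\\ \mathbf{K}^{*}&\mathbf{I}\end{smallmatrix}\right]$ used in the proof of \cref{converg}, a direct computation (in which the $c$-dependent terms of the bottom-left block cancel) shows that $\mathbf{J}^{*}$ is similar to
\begin{equation*}
\mathbf{M}^{*}=\begin{bmatrix}\mathbf{I}-\eta_{\mathbf{x}}\mathbf{S}^{*} & -\eta_{\mathbf{x}}\mathbf{H}_{\mathbf{x y}}^{*}\\ -\eta_{\mathbf{x}}\mathbf{K}^{*}\mathbf{S}^{*} & \mathbf{I}+\eta_{\mathbf{y}}\mathbf{H}_{\mathbf{y y}}^{*}-\eta_{\mathbf{x}}\mathbf{K}^{*}\mathbf{H}_{\mathbf{x y}}^{*}\end{bmatrix},
\end{equation*}
whose off-diagonal blocks are both $O(\eta_{\mathbf{x}})$, with diagonal blocks $\mathbf{I}-\eta_{\mathbf{x}}\mathbf{S}^{*}$ and $\mathbf{I}+\eta_{\mathbf{y}}\mathbf{H}_{\mathbf{y y}}^{*}+O(\eta_{\mathbf{x}})$.

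Next I would carry out a two-time-scale eigenvalue localization on $\mathbf{M}^{*}$. The spectrum of the top-left block clusters near $1$ while that of the bottom-right block clusters near $1-\eta_{\mathbf{y}}\lambda(-\mathbf{H}_{\mathbf{y y}}^{*})$, so the two diagonal spectra are separated by a gap of order $\eta_{\mathbf{y}}$, whereas the \emph{product} of the off-diagonal norms is only $O(\eta_{\mathbf{x}}^{2})$. Hence the dimensionless coupling ratio behaves like $\eta_{\mathbf{x}}^{2}/\eta_{\mathbf{y}}^{2}=O(1/c^{2})$. The key step is to invoke an invariant-subspace perturbation bound (Stewart's theorem, equivalently the solvability of the associated Sylvester/Riccati equation): once $c>c_{0}$ this ratio is small enough that $\mathbf{M}^{*}$ is block-diagonalizable by a similarity within $O(1/c)$ of the identity, so that each eigenvalue of $\mathbf{J}^{*}$ lies within $O(\eta_{\mathbf{x}}^{2}/\eta_{\mathbf{y}})=O(\eta_{\mathbf{x}}/c)$ of the union of the spectra of $\mathbf{I}-\eta_{\mathbf{x}}\mathbf{S}^{*}$ and $\mathbf{I}+\eta_{\mathbf{y}}\mathbf{H}_{\mathbf{y y}}^{*}$. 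Choosing $c_{0}$ large enough absorbs these displacements into the slacks $\eta_{\mathbf{x}}\delta$ and $\eta_{\mathbf{y}}\delta$ appearing in $\sigma_{1}$ and $\sigma_{2}$.

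It then remains to read off the two unperturbed spectral radii. Since $\mathbf{S}^{*}$ is symmetric and $\succ\mathbf{0}$, the eigenvalues of $\mathbf{I}-\eta_{\mathbf{x}}\mathbf{S}^{*}$ are $1-\eta_{\mathbf{x}}\mu$ with $\mu\in[\lambda_{\text{min}}(\mathbf{S}^{*}),\lambda_{\text{max}}(\mathbf{S}^{*})]$, giving the spectral radius $\max\{|1-\eta_{\mathbf{x}}\lambda_{\text{min}}(\mathbf{S}^{*})|,|1-\eta_{\mathbf{x}}\lambda_{\text{max}}(\mathbf{S}^{*})|\}$, i.e.\ $\sigma_{1}$ before the slack. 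Likewise $\mathbf{I}+\eta_{\mathbf{y}}\mathbf{H}_{\mathbf{y y}}^{*}=\mathbf{I}-\eta_{\mathbf{y}}(-\mathbf{H}_{\mathbf{y y}}^{*})$ with $-\mathbf{H}_{\mathbf{y y}}^{*}\succ\mathbf{0}$ has spectral radius $\max\{|1-\eta_{\mathbf{y}}\lambda_{\text{min}}(-\mathbf{H}_{\mathbf{y y}}^{*})|,|1-\eta_{\mathbf{y}}\lambda_{\text{max}}(-\mathbf{H}_{\mathbf{y y}}^{*})|\}$, which is $\sigma_{2}$ before the slack. Adding the localization slacks gives $\lambda(\mathbf{J}^{*})=\lambda(\mathbf{M}^{*})\le\max\{\sigma_{1},\sigma_{2}\}=\sigma$. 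Because $\mathbf{S}^{*}\succ\mathbf{0}$ and $-\mathbf{H}_{\mathbf{y y}}^{*}\succ\mathbf{0}$, taking $\delta$ small together with $\eta_{\mathbf{x}}\lambda_{\text{max}}(\mathbf{S}^{*})<2$ and $\eta_{\mathbf{y}}\lambda_{\text{max}}(-\mathbf{H}_{\mathbf{y y}}^{*})<2$ forces $\sigma<1$; the passage from $\lambda(\mathbf{J}^{*})<1$ to local linear convergence at asymptotic rate $\sigma$ is then identical to the Taylor-expansion argument of \cref{converg}, using the operator norm of \cref{jacobian_norm}.

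The principal obstacle is the eigenvalue localization of the second step: the off-diagonal coupling of $\mathbf{M}^{*}$ is \emph{not} small in absolute terms (it is $O(\eta_{\mathbf{x}})$, independent of $c$), so a direct Bauer--Fike estimate is useless; what decays is only the coupling relative to the spectral gap, which is exactly the regime handled by invariant-subspace / Riccati perturbation theory, and the bound must be shown uniform over all $c>c_{0}$. A secondary difficulty is non-normality: the diagonal blocks are products $\mathbf{\Lambda}\mathbf{H}^{*}$ rather than symmetric matrices, so the clean eigenvalue identifications above are legitimate only after the symmetrizing congruences (as in \cref{converg}), and the non-Euclidean norm of \cref{jacobian_norm} must be used to keep the implied constants independent of $c$.
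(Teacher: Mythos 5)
Your proposal is correct, and it reaches the theorem by a genuinely different route from the paper. The paper's own argument is only a sketch: it writes the Jacobian as $\mathbf{I}-\eta_{\mathbf{x}}\mathbf{U}^{*}_{\text{TTSGDA}}$ (\cref{jacobian_ttsgda}) and defers entirely to the asymptotic eigenvalue analysis of Jin et al.\ \cite{jin2020local}, where one argues by continuity of the roots of the characteristic polynomial as $c\to\infty$: the $d_2$ ``fast'' eigenvalues of $\mathbf{U}^{*}_{\text{TTSGDA}}$ track those of $-c\mathbf{H}_{\mathbf{y y}}^{*}$, while for bounded $\lambda$ the factorization $\det(\mathbf{U}^{*}_{\text{TTSGDA}}-\lambda\mathbf{I})=\det(-c\mathbf{H}_{\mathbf{y y}}^{*}-\lambda\mathbf{I})\cdot\det\bigl(\mathbf{S}^{*}-\lambda\mathbf{I}+o(1)\bigr)$ sends the $d_1$ slow eigenvalues to those of the Schur complement. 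You instead transplant the similarity transform from the proof of \cref{converg} and then invoke Stewart-type invariant-subspace perturbation. Your computation of $\mathbf{M}^{*}$ is right --- the $\eta_{\mathbf{y}}$-dependent part of the lower-left block cancels because $\mathbf{H}_{\mathbf{y y}}^{*}\mathbf{K}^{*}=\mathbf{H}_{\mathbf{y x}}^{*}$ --- and you correctly identify the crucial structural difference from \cref{converg}: for HessianFR the transform annihilates the lower-left block \emph{exactly}, so eigenvalues can be read off the block triangle, whereas for TTSGDA a residual coupling $-\eta_{\mathbf{x}}\mathbf{K}^{*}\mathbf{S}^{*}$ survives, which is exactly why a gap-normalized (Sylvester/Riccati) bound rather than Bauer--Fike is required. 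Your accounting is consistent: once $c>\lambda_{\max}(\mathbf{S}^{*})/\lambda_{\min}(-\mathbf{H}_{\mathbf{y y}}^{*})$ the two diagonal spectra cannot overlap and the gap is $\Theta(\eta_{\mathbf{y}})$, the coupling product is $O(\eta_{\mathbf{x}}^{2})$, and the resulting displacements $O(\eta_{\mathbf{x}}/c)$ and $O(\eta_{\mathbf{y}}/c)$ are absorbed by the slacks $\eta_{\mathbf{x}}\delta$ and $\eta_{\mathbf{y}}\delta$ for $c_0=O(1/\delta)$. What each approach buys: the route through \cite{jin2020local} is shorter but purely qualitative ($c_0$ exists, with no rate of displacement), while yours is self-contained within machinery the paper already develops and is quantitative, producing an explicit $c_0(\delta)$; it also needs the care you flag about non-normality, handled because both diagonal blocks are $O(\eta_{\mathbf{x}})$-perturbations of symmetric matrices. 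One last remark: you state $\sigma_2$ via $|1-\eta_{\mathbf{y}}\lambda(-\mathbf{H}_{\mathbf{y y}}^{*})|$, whereas the theorem as printed uses $\lambda(\mathbf{H}_{\mathbf{y y}}^{*})$; since $\mathbf{H}_{\mathbf{y y}}^{*}\prec\mathbf{0}$ the printed version would force $\sigma_2>1$ identically, so this is a sign typo in the paper that your reading silently, and correctly, repairs.
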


Sketch of proof: We consider the Jacobian of Two Time-Scale GDA (TTSGDA) at strict local minimax $\mathbf{z}^{*}=(\mathbf{x}^{*},\mathbf{y}^{*})$:
\begin{equation}
\label{jacobian_ttsgda}
    \mathbf{J}_{\text{TTSGDA}}^{*} = \mathbf{I} - \eta_{\mathbf{x}} \left[ \begin{matrix} \mathbf{H}_{\mathbf{x x}}^{*} & \mathbf{H}_{\mathbf{x y}}^{*}\\ -c\mathbf{H}_{\mathbf{y x}}^{*} & -c\mathbf{H}_{\mathbf{y y}}^{*}   \end{matrix} \right] =:\mathbf{I} - \eta_{\mathbf{x}} \mathbf{U}^{*}_{\text{TTSGDA}}.
\end{equation}We let $c$ to be large enough such that the eigenvalues of $\mathbf{U}^{*}_{\text{TTSGDA}}$ is close to eigenvalues of $\mathbf{H}_{\mathbf{x x}}^{*} - \mathbf{H}_{\mathbf{x y}}^{*}\mathbf{H}_{\mathbf{y y}}^{*-1}\mathbf{H}_{\mathbf{y x}}^{*}$ and $\mathbf{H}_{\mathbf{y y}}^{*}$. For more details, please refer to \cite{jin2020local}.

Update rule for GDA-k:
\begin{equation*}
    \begin{array}{l}
        \mathbf{x}_{t+1} = \mathbf{x}_{t} - \eta_{\mathbf{x}} \nabla_{\mathbf{x}}f(\mathbf{x}_t,\mathbf{y}_t),\\ 
        \mathbf{y}_{t}^{(i+1)} = \mathbf{y}_{t}^{(i)} + \eta_{\mathbf{y}}\nabla_{\mathbf{y}}f(\mathbf{x}_t,\mathbf{y}_t^{(i)}), i=1 \cdots k-1,\\
        \mathbf{y}_{t+1} = \mathbf{y}_{t}^{(k)} + \eta_{\mathbf{y}}\nabla_{\mathbf{y}}f(\mathbf{x}_t,\mathbf{y}_t^{(k)}).
    \end{array}
\end{equation*}

\begin{theorem}
[Local convergence of GDA-k] GDA-k achieves an asymptotic linear convergence rate at the strict local minimax $\mathbf{z}^{*}$:
$\sigma = \max \{|1-\eta_{\mathbf{x}}\lambda_{\text{min}}(\mathbf{H}_{\mathbf{x x}}^{*} - \mathbf{H}_{\mathbf{x y}}^{*}\mathbf{H}_{\mathbf{y y}}^{*-1}\mathbf{H}_{\mathbf{y x}}^{*})|, |1-\eta_{\mathbf{x}}\lambda_{\text{max}}(\mathbf{H}_{\mathbf{x x}}^{*} - \mathbf{H}_{\mathbf{x y}}^{*}\mathbf{H}_{\mathbf{y y}}^{*-1}\mathbf{H}_{\mathbf{y x}}^{*})| \}$ when $k \to \infty$ and $\eta_{\mathbf{y}}<2/\lambda_{\text{min}}(\mathbf{H}_{\mathbf{y y}}^{*})$. To guarantee the convergence that $\sigma < 1$, we properly select the learning rates $\eta_{\mathbf{x}}$ and $\eta_{\mathbf{y}}$.
\end{theorem}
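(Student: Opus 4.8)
The plan is to compute the one-step Jacobian of GDA-$k$ at the fixed point $\mathbf{z}^{*}$ and to track its spectrum as $k\to\infty$, mirroring the Two Time-Scale sketch above. One outer iteration consists of $k$ inner gradient-ascent steps on $\mathbf{y}$ with $\mathbf{x}_t$ held fixed, producing $\mathbf{y}_{t+1}$, followed by the gradient-descent step on $\mathbf{x}$ evaluated at the updated iterate. Writing $\mathbf{B} := \mathbf{I} + \eta_{\mathbf{y}}\mathbf{H}_{\mathbf{y}\mathbf{y}}^{*}$ for the $\mathbf{y}$-Jacobian of a single inner step, the linearized inner recursion $\delta\mathbf{y}^{(i+1)} = \mathbf{B}\,\delta\mathbf{y}^{(i)} + \eta_{\mathbf{y}}\mathbf{H}_{\mathbf{y}\mathbf{x}}^{*}\,\delta\mathbf{x}$ unrolls to $\delta\mathbf{y}_{t+1} = \mathbf{B}^{k}\delta\mathbf{y}_t + \big(\sum_{j=0}^{k-1}\mathbf{B}^{j}\big)\eta_{\mathbf{y}}\mathbf{H}_{\mathbf{y}\mathbf{x}}^{*}\,\delta\mathbf{x}_t$. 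Substituting into the $\mathbf{x}$-update and differentiating yields the block Jacobian
\[
\mathbf{J}_k = \left[\begin{matrix} \mathbf{I} - \eta_{\mathbf{x}}\mathbf{H}_{\mathbf{x}\mathbf{x}}^{*} - \eta_{\mathbf{x}}\mathbf{H}_{\mathbf{x}\mathbf{y}}^{*}\mathbf{S}_k & -\eta_{\mathbf{x}}\mathbf{H}_{\mathbf{x}\mathbf{y}}^{*}\mathbf{B}^{k} \\ \mathbf{S}_k & \mathbf{B}^{k} \end{matrix}\right], \qquad \mathbf{S}_k := \Big(\sum_{j=0}^{k-1}\mathbf{B}^{j}\Big)\eta_{\mathbf{y}}\mathbf{H}_{\mathbf{y}\mathbf{x}}^{*}.
\]

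Second, I would invoke the learning-rate condition. Since $\mathbf{H}_{\mathbf{y}\mathbf{y}}^{*}\prec\mathbf{0}$ at a strict local minimax, the bound on $\eta_{\mathbf{y}}$ ensures $\rho(\mathbf{B})<1$, so the inner ascent is a contraction. Consequently $\mathbf{B}^{k}\to\mathbf{0}$ and the Neumann series $\sum_{j=0}^{k-1}\mathbf{B}^{j}\to(\mathbf{I}-\mathbf{B})^{-1} = -\tfrac{1}{\eta_{\mathbf{y}}}\mathbf{H}_{\mathbf{y}\mathbf{y}}^{*-1}$ as $k\to\infty$, whence $\mathbf{S}_k\to -\mathbf{H}_{\mathbf{y}\mathbf{y}}^{*-1}\mathbf{H}_{\mathbf{y}\mathbf{x}}^{*}$. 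The off-diagonal block $-\eta_{\mathbf{x}}\mathbf{H}_{\mathbf{x}\mathbf{y}}^{*}\mathbf{B}^{k}$ and the lower-right block $\mathbf{B}^{k}$ both vanish, so $\mathbf{J}_k$ converges to the block lower-triangular matrix
\[
\mathbf{J}_\infty = \left[\begin{matrix} \mathbf{I} - \eta_{\mathbf{x}}\big(\mathbf{H}_{\mathbf{x}\mathbf{x}}^{*} - \mathbf{H}_{\mathbf{x}\mathbf{y}}^{*}\mathbf{H}_{\mathbf{y}\mathbf{y}}^{*-1}\mathbf{H}_{\mathbf{y}\mathbf{x}}^{*}\big) & \mathbf{0} \\ -\mathbf{H}_{\mathbf{y}\mathbf{y}}^{*-1}\mathbf{H}_{\mathbf{y}\mathbf{x}}^{*} & \mathbf{0} \end{matrix}\right].
\]
This encodes exactly the reduced dynamics $\mathbf{x}_{t+1} = \mathbf{x}_t - \eta_{\mathbf{x}}\nabla_{\mathbf{x}}f(\mathbf{x}_t, r(\mathbf{x}_t))$, i.e.\ gradient descent on the envelope $\phi(\mathbf{x}):=f(\mathbf{x},r(\mathbf{x}))$ with $r$ the best-response function of \cref{def_local_minmax}.

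Third, I would read off the rate. The upper-left block is $\mathbf{I}-\eta_{\mathbf{x}}\mathbf{S}$ with $\mathbf{S}:=\mathbf{H}_{\mathbf{x}\mathbf{x}}^{*} - \mathbf{H}_{\mathbf{x}\mathbf{y}}^{*}\mathbf{H}_{\mathbf{y}\mathbf{y}}^{*-1}\mathbf{H}_{\mathbf{y}\mathbf{x}}^{*}$ the Schur complement, which equals $\nabla^2\phi(\mathbf{x}^{*})$ (differentiate $\nabla_{\mathbf{y}}f(\mathbf{x},r(\mathbf{x}))=\mathbf{0}$ to obtain $r'(\mathbf{x}^{*})=-\mathbf{H}_{\mathbf{y}\mathbf{y}}^{*-1}\mathbf{H}_{\mathbf{y}\mathbf{x}}^{*}$) and is positive definite by \cref{sufficient_local_minimax}. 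Because $\mathbf{J}_\infty$ is block lower-triangular, its spectrum is the union of the eigenvalues of $\mathbf{I}-\eta_{\mathbf{x}}\mathbf{S}$ with a cluster of zeros, so $\rho(\mathbf{J}_\infty)=\rho(\mathbf{I}-\eta_{\mathbf{x}}\mathbf{S})=\max\{|1-\eta_{\mathbf{x}}\lambda_{\text{min}}(\mathbf{S})|,|1-\eta_{\mathbf{x}}\lambda_{\text{max}}(\mathbf{S})|\}=\sigma$. Choosing $\eta_{\mathbf{x}}<2/\lambda_{\text{max}}(\mathbf{S})$ makes $\sigma<1$, and the standard linearization argument used in the proof of \cref{converg} then upgrades $\rho(\mathbf{J}^{*})<1$ into genuine local linear convergence with rate $\sigma$.

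The main obstacle is the passage to the $k\to\infty$ limit. For every finite $k$ the Jacobian $\mathbf{J}_k$ is \emph{not} block-triangular—its upper-right block $-\eta_{\mathbf{x}}\mathbf{H}_{\mathbf{x}\mathbf{y}}^{*}\mathbf{B}^{k}$ is nonzero and $\mathbf{J}_k$ is non-normal—so one cannot simply read the spectral radius off the diagonal blocks, and it is precisely this coupling that the hypothesis ``$k\to\infty$'' is there to kill. Making the argument rigorous requires controlling $\|\mathbf{B}^{k}\|$ and $\|\mathbf{S}_k + \mathbf{H}_{\mathbf{y}\mathbf{y}}^{*-1}\mathbf{H}_{\mathbf{y}\mathbf{x}}^{*}\|$ quantitatively (both decay geometrically in $k$ at rate $\rho(\mathbf{B})$) and then appealing to continuity of the spectrum in the matrix entries to conclude $\rho(\mathbf{J}_k)\to\rho(\mathbf{J}_\infty)=\sigma$; the subtle point is that the spectral radius is only continuous, not Lipschitz, for non-normal matrices, so the convergence is asymptotic, which is exactly why the statement asserts the rate only in the limit.
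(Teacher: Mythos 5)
Your proposal is correct, and it actually supplies an argument the paper never gives: this theorem is stated in the appendix without proof (the paper defers to \cite{zhang2020newton}, providing Jacobian-based sketches only for Two Time-Scale GDA and EG). Your route --- unrolling the linearized inner ascent with $\mathbf{B}=\mathbf{I}+\eta_{\mathbf{y}}\mathbf{H}_{\mathbf{y}\mathbf{y}}^{*}$, using $\rho(\mathbf{B})<1$ to send $\mathbf{B}^{k}\to\mathbf{0}$ and the Neumann series to $-\tfrac{1}{\eta_{\mathbf{y}}}\mathbf{H}_{\mathbf{y}\mathbf{y}}^{*-1}$, and reading $\sigma$ off the block lower-triangular limit $\mathbf{J}_\infty$ --- is exactly in the spirit of the paper's TTSGDA sketch, correctly identifies the $k\to\infty$ dynamics as gradient descent on the envelope $f(\mathbf{x},r(\mathbf{x}))$ with Hessian the Schur complement, and your closing observation that the spectral radius of the non-normal $\mathbf{J}_k$ is only continuous in $k$ is precisely why the rate is asserted only asymptotically. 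The upgrade from $\rho(\mathbf{J}_\infty)<1$ to genuine local linear convergence via the linearization argument of \cref{converg} is also the right mechanism.

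Two caveats deserve explicit remarks rather than silent treatment. First, your Jacobian assumes the $\mathbf{x}$-step is taken \emph{after} the inner loop, i.e.\ evaluated at $(\mathbf{x}_t,\mathbf{y}_{t+1})$, whereas the paper's pseudocode literally lists $\mathbf{x}_{t+1}=\mathbf{x}_t-\eta_{\mathbf{x}}\nabla_{\mathbf{x}}f(\mathbf{x}_t,\mathbf{y}_t)$ first, using the old $\mathbf{y}_t$. Under that literal ordering the upper-right block of the limiting Jacobian is $-\eta_{\mathbf{x}}\mathbf{H}_{\mathbf{x}\mathbf{y}}^{*}$ rather than $\mathbf{0}$, the limit is not block-triangular, and a scalar check (take $\mathbf{H}_{\mathbf{x}\mathbf{x}}^{*}=a$, $\mathbf{H}_{\mathbf{x}\mathbf{y}}^{*}=b$, $\mathbf{H}_{\mathbf{y}\mathbf{y}}^{*}=-c$) shows its spectral radius is $1-\eta_{\mathbf{x}}\lambda(\mathbf{S})+O(\eta_{\mathbf{x}}^{2})$, matching $\sigma$ only to first order in $\eta_{\mathbf{x}}$; so your reading, which is the standard GDA-$k$ used in GAN training and the only one under which the stated $\sigma$ is exact, should be stated as an interpretive choice. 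Second, the theorem's condition $\eta_{\mathbf{y}}<2/\lambda_{\text{min}}(\mathbf{H}_{\mathbf{y}\mathbf{y}}^{*})$ is vacuous as written, since $\lambda_{\text{min}}(\mathbf{H}_{\mathbf{y}\mathbf{y}}^{*})<0$ at a strict local minimax; your contraction condition $\rho(\mathbf{I}+\eta_{\mathbf{y}}\mathbf{H}_{\mathbf{y}\mathbf{y}}^{*})<1$, equivalently $\eta_{\mathbf{y}}<2/\lambda_{\text{max}}(-\mathbf{H}_{\mathbf{y}\mathbf{y}}^{*})$, is the corrected form, and you have implicitly repaired the paper's sign typo --- say so explicitly.
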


Entra-gradient (EG), as an extention of Two Time-Scale GDA, is derived from proximal point approach in simultaneous games. Mathematically, it is formulated as
\begin{equation*}
    \begin{array}{ll}
        \mathbf{x}_{t+1/2} = \mathbf{x}_{t} - \eta_{\mathbf{x}} \nabla_{\mathbf{x}}f(\mathbf{x}_t,\mathbf{y}_t),\\ 
        \mathbf{y}_{t+1/2} = \mathbf{y}_{t} + \eta_{\mathbf{y}}\nabla_{\mathbf{y}}f(\mathbf{x}_t,\mathbf{y}_t),\\
        \mathbf{x}_{t+1} = \mathbf{x}_{t} - \eta_{\mathbf{x}} \nabla_{\mathbf{x}}f(\mathbf{x}_{t+1/2},\mathbf{y}_{t+1/2}),\\ 
        \mathbf{y}_{t+1} = \mathbf{y}_{t} + \eta_{\mathbf{y}}\nabla_{\mathbf{y}}f(\mathbf{x}_{t+1/2},\mathbf{y}_{t+1/2}).\\
    \end{array}
\end{equation*}

Unfortunately, we find that EG may have worse convergence than Two Time-Scale GDA theoretically. Numerical experiments on toy examples in \cref{numerical_experiments} also show the worse performance of EG in solving sequential min-max problem \cref{min-max}. The next theorem illustrates that EG is not suitable to solve sequential min-max problems. 
\begin{theorem}
[Local convergence of EG] EG either suffers from rotational behavior (instability) or behaves worse than Two Time-Scale GDA almost surely. 
\end{theorem}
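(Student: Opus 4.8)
The plan is to linearize the EG map about a strict local minimax $\mathbf{z}^{*}=(\mathbf{x}^{*},\mathbf{y}^{*})$ and compare its Jacobian spectrum with that of Two Time-Scale GDA, reusing the matrix $\mathbf{U}^{*}_{\text{TTSGDA}}$ from \cref{jacobian_ttsgda}. Writing the game vector field $v(\mathbf{z})=(\nabla_{\mathbf{x}}f,-\nabla_{\mathbf{y}}f)$ and $\mathbf{D}=\diag(\eta_{\mathbf{x}}\mathbf{I},\eta_{\mathbf{y}}\mathbf{I})$, the EG update is the composition $g(\mathbf{z})=\mathbf{z}-\mathbf{D}\,v(\mathbf{z}-\mathbf{D}\,v(\mathbf{z}))$. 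First I would differentiate $g$ and evaluate at the critical point, where $v(\mathbf{z}^{*})=\mathbf{0}$; the chain rule then collapses to
\begin{equation*}
\mathbf{J}^{*}_{\text{EG}}=\mathbf{I}-\mathbf{A}+\mathbf{A}^{2},\qquad \mathbf{A}:=\mathbf{D}\,\nabla v(\mathbf{z}^{*})=\eta_{\mathbf{x}}\mathbf{U}^{*}_{\text{TTSGDA}},
\end{equation*}
so that EG and TTSGDA are driven by the \emph{same} matrix $\mathbf{A}$, with $\mathbf{J}^{*}_{\text{TTSGDA}}=\mathbf{I}-\mathbf{A}$. This structural identity is the crux: it reduces the whole theorem to comparing the two scalar maps $\phi(\mu)=1-\mu+\mu^{2}$ (EG) and $\psi(\mu)=1-\mu$ (TTSGDA) applied to the eigenvalues $\mu$ of $\mathbf{A}$.

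Next I would split on the nature of the spectrum of $\mathbf{A}$. If every eigenvalue $\mu$ is real, I claim $|\phi(\mu)|\ge|\psi(\mu)|$ with equality only at $\mu=0$: since the discriminant of $1-\mu+\mu^{2}$ is negative, $\phi(\mu)>0$, and a short case split on $\mu\le1$ versus $\mu>1$ gives $\phi(\mu)-|\psi(\mu)|=\mu^{2}$ and $(\mu-1)^{2}+1$ respectively. Choosing an index $j^{*}$ attaining $\rho(\mathbf{J}^{*}_{\text{TTSGDA}})=|\psi(\mu_{j^{*}})|$ and noting $\mu_{j^{*}}\neq0$ in any convergent regime, this yields $\rho(\mathbf{J}^{*}_{\text{EG}})\ge|\phi(\mu_{j^{*}})|>\rho(\mathbf{J}^{*}_{\text{TTSGDA}})$, i.e. strictly slower asymptotic convergence — the ``worse than TTSGDA'' branch. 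The ``almost surely'' qualifier is then exactly the genericity statement that the boundary configurations (a relevant eigenvalue equal to $0$) form a measure-zero set in the space of admissible Hessians, so the strict inequality holds generically.

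If instead $\mathbf{A}$ has a non-real eigenvalue $\mu=a+bi$ with $b\neq0$, then $\phi(\mu)$ is complex and the corresponding two-dimensional real invariant subspace of $\mathbf{J}^{*}_{\text{EG}}$ carries a genuine rotation, so the linearized iterates spiral around $\mathbf{z}^{*}$; this is the ``rotational behavior (instability)'' branch, which I would make precise by exhibiting the rotation angle $\arg\phi(\mu)\neq0$ and noting that such oscillatory dynamics are exactly what a leader--follower scheme is meant to avoid (the bilinear case $\mu=bi$ being the prototype). Together the two cases give the stated dichotomy: EG is never cleanly better than TTSGDA, being either rotational or strictly slower.

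The step I expect to be the main obstacle is the complex-eigenvalue branch: EG is well known to remain stable on purely rotational (e.g.\ bilinear) dynamics, so one cannot claim literal divergence ($\rho(\mathbf{J}^{*}_{\text{EG}})\ge1$) there. The honest content is therefore that a non-real spectrum forces rotation rather than the monotone ridge-following one wants, and care is needed to phrase ``instability'' as this rotational behavior rather than as blow-up. A secondary technicality is pinning down ``almost surely'': I would fix a measure (e.g.\ Lebesgue measure on the entries of the Hessian blocks consistent with the local-minimax sign conditions of \cref{sufficient_local_minimax}) so that the exceptional set $\{\mu=0\}$ is genuinely null.
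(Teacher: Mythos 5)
Your skeleton is the paper's own: the identity $\mathbf{J}^{*}_{\text{EG}}=\mathbf{I}-\eta_{\mathbf{x}}\mathbf{U}^{*}_{\text{TTSGDA}}+\eta_{\mathbf{x}}^{2}\mathbf{U}^{*2}_{\text{TTSGDA}}$ from \cref{jacobian_ttsgda} (which the paper asserts and you additionally derive via the chain rule at $v(\mathbf{z}^{*})=\mathbf{0}$ --- a nice touch), followed by the scalar comparison of $\phi(\mu)=1-\mu+\mu^{2}$ with $\psi(\mu)=1-\mu$ over the spectrum of $\eta_{\mathbf{x}}\mathbf{U}^{*}_{\text{TTSGDA}}$; your real-eigenvalue case split reproduces the paper's inequality $|1-a+a^{2}|\geq|1-a|$. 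However, your complex branch contains a genuine error: for $\mu=a+bi$ with $b\neq0$ you assert that $\phi(\mu)$ is complex, but $\operatorname{Im}\phi(\mu)=(2a-1)b$ vanishes identically on the hyperplane $a=1/2$, no matter how large $b$ is. There the EG Jacobian's eigenvalue $\phi(1/2+bi)=3/4-b^{2}$ is real, so there is no rotation angle to exhibit --- and the dichotomy itself fails on that set: taking $b=1/2$ gives $|\phi(\mu)|=1/2<|\psi(\mu)|=\sqrt{1/2}$, i.e., EG is strictly \emph{faster} than Two Time-Scale GDA and non-rotational. This is exactly why the paper's sketch inserts the hypothesis ``$a_{j}\neq1/2$ (almost surely)'': the theorem's ``almost surely'' is the exclusion of $\{\operatorname{Re}\mu=1/2\}$, not of $\{\mu=0\}$.

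Your genericity paragraph therefore fixes the wrong exceptional set. Excluding $\mu=0$ is unnecessary --- as you yourself observe, convergence of Two Time-Scale GDA already forces $|1-\mu|<1$, hence $\mu\neq0$ --- while the set you actually must excise, $\{\operatorname{Re}\mu=1/2\}$, goes unmentioned, and your closing claim that ``a non-real spectrum forces rotation'' is false precisely there. The repair is the paper's per-eigenvalue dichotomy: assuming $a_{j}\neq1/2$ (a Lebesgue-null condition on the Hessian blocks, in the same spirit as your measure-fixing remark), either $\operatorname{Im}\phi(\lambda_{j})=(2a_{j}-1)b_{j}$ is bounded away from zero, which is the rotational branch, or $b_{j}\approx0$, in which case your real-case estimate yields the strict slowdown $|\phi(\lambda_{j})|\approx|1-a_{j}+a_{j}^{2}|\geq|1-a_{j}|$. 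With that substitution your argument coincides with the paper's; your caution that EG's ``instability'' must be read as rotation rather than blow-up (EG is stable on bilinear games) is well placed and matches the paper's phrasing of the theorem.
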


Sketch of proof: We consider the Jacobian of Extra Gradient (EG) at a strict local minimax $\mathbf{z}^{*}=(\mathbf{x}^{*},\mathbf{y}^{*})$. We relate EG to Two Time-Scale GDA:
\begin{equation*}
\mathbf{J}^{*}_{\text{EG}} = \mathbf{I} - \eta_{\mathbf{x}} \mathbf{U}^{*}_{\text{TTSGDA}} + \eta_{\mathbf{x}}^2 \mathbf{U}^{*2}_{\text{TTSGDA}}, 
\end{equation*}where $\mathbf{U}^{*}_{\text{TTSGDA}}$ is defined in \cref{jacobian_ttsgda}. Suppose that the eigenvalues of $\eta_{\mathbf{x}}\mathbf{U}^{*}_{\text{TTSGDA}}$ is $\lambda_1, \lambda_2,$ $\cdots, \lambda_{d_1+d_2}$ and let $\lambda_j = a_j + b_j i$ where $i$ is the imaginary unit. For the convergence of Two Time-Scale GDA, we require $|1-\lambda_j| \in [0,1)$. Similarly, to guarantee the convergence of EG, we have $|1-\lambda_j+\lambda_j^2|=|(1+a_j^2+b_j^2-a_j)+(2a_j-1)b_j i| \in [0,1)$. The rotational behavior arises if the Jacobian matrix has eigenvalues with large imaginary parts. Therefore, if $a_j \neq 1/2$ (almost surely), then $(2a_j-1)b_j \approx 0$ iff $b_j \approx 0$ because the boundedness of $1+a_j^2+b_j^2-a_j$ implies the boundedness of $a_j$. When $b_j \approx 0$, we have $|(1+a_j^2+b_j^2-a_j)+(2a_j-1)b_j i| \approx |1+a_j^2-a_j| \geq |1-a_j|$, which implies that EG converges slower than Two Time-Scale GDA.

\bibliographystyle{siamplain}
\bibliography{references}

\end{document}